\newtheorem{theorem}[subsection]{Theorem}
\newtheorem{lemma}[subsection]{Lemma}
\newtheorem{sublemma}[subsubsection]{Lemma}
\newtheorem{proposition}[subsection]{Proposition}
\theoremstyle{definition}
\newtheorem{definition}[subsection]{Definition}
\theoremstyle{remark}
\newtheorem{example}[subsection]{Example}
\makeatletter \@addtoreset{subsection}{section}
\newcommand\testshape{family=\f@family; series=\f@series; shape=\f@shape.}
\def\myemphInternal#1{\if n\f@shape%
\begingroup\itshape #1\endgroup\/%
\else\begingroup\bfseries #1\endgroup%
\fi}
\def\myemph{\futurelet\testchar\MaybeOptArgmyemph}
\def\MaybeOptArgmyemph{\ifx[\testchar \let\next\OptArgmyemph
                 \else \let\next\NoOptArgmyemph \fi \next}
\def\OptArgmyemph[#1]#2{\index{#1}\myemphInternal{#2}}
\def\NoOptArgmyemph#1{\myemphInternal{#1}}
\newcommand{\bN}{\mathbb{N}}
\newcommand{\bR}{\mathbb{R}}
\newcommand{\cU}{\mathcal{U}}
\newcommand{\Int}{\mathop{\mathrm{Int}}\nolimits}
\newcommand\Ksp{K}
\newcommand\Qsp{Q}
\newcommand\Xsp{X}
\newcommand\Ysp{Y}
\newcommand\Zsp{Z}
\newcommand\Usp{U}
\newcommand\Vsp{V}
\newcommand\Wsp{W}
\newcommand\Partition{\Delta}
\newcommand\elem{\omega}
\newcommand\prj{p}
\newcommand\bnd[1]{\mathrm{hcl}(#1)}
\newcommand\bnds[1]{\mathrm{hcl}_{S}(#1)}
\newcommand\YspecPtSet{\mathcal{V}}
\newcommand\classFol{\mathcal{F}}
\newcommand\strip{S}
\newcommand\eps{\varepsilon}
\newcommand\crosssect{\sigma}
\newcommand\trap{T}
\newcommand\strap{S}
\newcommand\Qmin{A}
\newcommand\Qmax{B}
\newcommand\iQmin{\Qmin^{o}}
\newcommand\iQmax{\Qmax^{o}}
\newcommand\aSet{\mathcal{A}}
\newcommand\bSet{\mathcal{B}}
\newcommand\ai{\alpha}
\newcommand\Jj{J}
\newcommand\Jint{K}
\newcommand\JOint{\Jint^{o}}
\newcommand\JA{K}
\newcommand\JB{L}
\newcommand\JC{M}
\newcommand\JOA{\JA^{o}}
\newcommand\JOB{\JB^{o}}
\newcommand\ahom{\kappa}
\newcommand\bhom{\lambda}
\newcommand\chom{\mu}
\newcommand\aahom{\eta_{K}}
\newcommand\bbhom{\eta_{L}}
\newcommand\Ac{A}
\newcommand\Bc{B}
\newcommand\onehalf{\tfrac{1}{2}}
\newcommand\func{\gamma}
\newcommand\dd{d}
\newcommand\roof{\mathrm{roof}}
\begin{document}

\title[Foliations with all non-closed leaves on non-compact surfaces]
    {Foliations with all non-closed leaves on non-compact surfaces}

\author{Sergiy Maksymenko}
\address{Institute of Mathematics, National Academy of Sciences of
Ukraine, 3 Teresh\-chenkivs'ka, Kyiv, 01601, Ukraine}
\email{Sergiy Maksymenko <maks@imath.kiev.ua>}

\author{Eugene Polulyakh}
\address{Institute of Mathematics, National Academy of Sciences of
Ukraine, 3 Teresh\-chenkivs'ka, Kyiv, 01601, Ukraine}
\email{Eugene Polulyakh <polulyah@imath.kiev.ua>}

\subjclass[2010]{57R30, 55R10}
\date{30/05/2016}
\keywords{Foliation, non-compact surface, fiber bundles.}

\begin{abstract}
Let $X$ be a connected non-compact $2$-dimensional manifold
possibly with boundary and $\Delta$ be a foliation on $X$ such
that each leaf $\omega\in\Delta$ is homeomorphic to $\mathbb{R}$
and has a trivially foliated neighborhood. Such foliations on the
plane were studied by W.~Kaplan who also gave their topological
classification. He proved that the plane splits into a family of
open strips foliated by parallel lines and glued along some
boundary intervals. However W.~Kaplan's construction depends on a
choice of those intervals, and a foliation is described in a
non-unique way. We propose a canonical cutting by open strips
which gives a uniqueness of classifying invariant. We also describe
topological types of closures of those strips under additional
assumptions on $\Delta$.
\end{abstract}

\maketitle

\section{Introduction}
Let $\Xsp$ be a $2$-dimensional manifold possibly non-connected and having a boundary, and $\Partition$ be a one-dimensional foliation on $\Xsp$.
We will say that $\Partition$ belongs to class $\classFol$ if it satisfies the following three conditions.
\begin{enumerate}
\item\label{fol:enum:leaves_are_closed}
Each leaf $\omega$ of $\Partition$ is a closed subset of $\Xsp$.

\item\label{fol:enum:bnd_consists_of_leaves}
Every connected component $\omega$ of $\partial\Xsp$ is a leaf of $\Partition$.

\item\label{fol:enum:loctriv_fol}
Let $\omega \in \Partition$ be a leaf, and $J=[0,1)$ if $\omega\subset\partial\Xsp$, and $J=(-1,1)$ otherwise.
Then there exists an open neighborhood $\Usp$ of $\omega$ and a homeomorphism $\phi:\bR\times J \to \Usp$ such that $\phi(\bR\times 0) = \omega$ and $\phi(\bR\times t)$ is a leaf of $\Partition$ for all $t\in J$, see Figure~\ref{fig:foliation_classF}.
\end{enumerate}

Roughly speaking, \emph{a $1$-dimensional foliation} $\Partition$ is a partition of $\Xsp$
 which looks like a partition of $\bR^2$ into parallel lines \emph{near each point $x\in\Xsp$}.
Then $\Partition$ belongs to class $\classFol$ whenever it looks like partition of $\bR^2$ into parallel lines
\emph{near each leaf $\omega\in\Partition$}.
In particular, each leaf of $\Partition$ is homeomorphic to $\bR$.

\begin{figure}[ht]
\begin{tabular}{ccc}
\includegraphics[height=1.3cm]{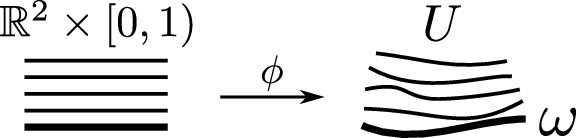} & \qquad \qquad &
\includegraphics[height=1.3cm]{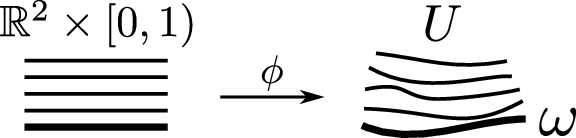} \\
(a) leaf in the boundary & &
(b) leaf in the interior
\end{tabular}
\caption{}\label{fig:foliation_classF}
\end{figure}

\vspace*{-2mm}

\begin{definition}
  Let $\Xsp_i$ be a surface with a foliation $\Partition_i$, $i=1,2$.
  Then a homeomorphism $h:\Xsp_1 \to \Xsp_2$ will be called
  \emph{foliated} if it maps leaves of $\Partition_1$ onto leaves of
  $\Partition_2$.  In this case we will also write $h:
  (\Xsp_1,\Partition_1) \to (\Xsp_2,\Partition_2)$.
\end{definition}

The aim of the present paper is to describe a topological structure of foliations belonging to class $\classFol$ up to foliated homeomorphisms, see Theorem~\ref{th:open_strips} below.
Such foliations on the plane were studied by W.~Kaplan~\cite{Kaplan:DJM:1940} and they appear as foliations by level sets of pseudoharmonic functions on $\bR^2$, see W.~Kaplan~\cite[Theorem~42]{Kaplan:DJM:1940}, W.~Boothby~\cite{Boothby:AJM_1:1951}, \cite{Boothby:AJM_2:1951}, M.~Morse and J.~Jenkins~\cite{JenkinsMorse:AJM:1952}, M.~Morse~\cite{Morse:FM:1952}.
We will improve Kaplan's construction and extend it to foliations on arbitrary surfaces.

Topological structure of singular foliations on surfaces, in
particular, foliations by orbits of flows, were studied by A.~Andronov
and L.~Pontryagin~\cite{AndronovPontryagin:DANSSSR:1937},
M.~Peixoto~\cite{Peixoto:Top:1962}, \cite{Peixoto:Top:1963},
S.~Aranson and V.~Grines~\cite{AransonGrines:MatSb:1973,
  AransonGrines:UMN:1986}, I.~Bronstein and
I.~Nikolayev~\cite{BronsteinNikolayev:TA:1997}, S.~Aranson,
E.~Zhuzhoma, and
V.~Medvedev~\cite{AransonZhuzhomaMedvedev:MatSb:1997},
L.~Plachta~\cite{Plachta:fol1:TA:2003,
  Plachta:fol2:MMFMP:2001,Plachta:fol3:MMFMP:2001}, A.~Oshemkov and
V.~Sharko~\cite{OshemkovSharko:MatSb:1998}, S.~Aranson, V.~Grines and
V.~Kaimanovich~\cite{AransonGrinesKaimanovich:JDCS:2003},
M.~Farber~\cite{Farber:AMSP:2004}, N.~Budnytska and
O.~Prishlyak~\cite{BudnytskaPryshlyak:UMJ:2009}, N.~Budnyts'ka and
T.~Rybalkina~\cite{BudnytskaRybalkina:UMJ:2012} and many others.
Results of the paper could also be applied to singular foliations
without non-closed leaves on surfaces by removing singularities.  This
will be done in subsequent papers by the authors.

\smallskip

\subsection*{Special leaves}
Suppose $\Partition$ is a foliation of class $\classFol$ on a surface $\Xsp$.
Let $\Ysp = \Xsp/\Partition$ be the space of leaves, and $\prj:\Xsp\to\Ysp$ be the corresponding quotient map.
Endow $\Ysp$ with the \emph{quotient topology}, so a subset $\Vsp\subset\Ysp$ is open if and only if its inverse $\prj^{-1}(\Vsp)$ is open in $\Xsp$.
For a subset $\Usp\subset\Xsp$ its \emph{saturation}, $S(\Usp)$, with respect to $\Partition$ is the
union of all leaves of $\Partition$ intersecting $\Usp$.
Equivalently, $S(\Usp) = \prj^{-1}(\prj(\Usp))$.

Since each leaf of $\Partition$ is a closed subset of $\Xsp$, it follows that $\Ysp$ is a $T_1$-space.
However, in general, $\Ysp$ is not a Hausdorff space.

\begin{lemma}\label{lm:classFol_prop}
If $\Partition\in\classFol$ then the projection map $\prj:\Xsp\to\Ysp$ is open.
\end{lemma}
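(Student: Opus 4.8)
The plan is to show that the image $\prj(\Usp)$ of every open set $\Usp\subset\Xsp$ is open in $\Ysp$, which by definition of the quotient topology amounts to showing that the saturation $S(\Usp) = \prj^{-1}(\prj(\Usp))$ is open in $\Xsp$. So let $\Usp\subset\Xsp$ be open and let $x\in S(\Usp)$ be an arbitrary point; I want to produce an open neighborhood of $x$ contained in $S(\Usp)$. By definition of the saturation, the leaf $\elem$ through $x$ meets $\Usp$ at some point $y\in\elem\cap\Usp$.

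\medskip

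Now apply condition~\eqref{fol:enum:loctriv_fol} of the definition of class $\classFol$ to the leaf $\elem$: there is an open neighborhood $\Vsp$ of $\elem$ and a homeomorphism $\rfunc:\bR\times\Jj\to\Vsp$ (with $\Jj=[0,1)$ or $(-1,1)$) such that $\rfunc(\bR\times 0)=\elem$ and each $\rfunc(\bR\times t)$ is a leaf of $\Partition$. In these ``flat'' coordinates the point $y$ corresponds to $(s,0)$ for some $s\in\bR$, and $x$ corresponds to $(s',0)$ for some $s'\in\bR$. Since $\Usp$ is open and contains $y$, its preimage $\rfunc^{-1}(\Usp\cap\Vsp)$ is an open neighborhood of $(s,0)$ in $\bR\times\Jj$; hence it contains a basic product neighborhood of the form $(s-\eps,s+\eps)\times[0,\delta)$ or $(s-\eps,s+\eps)\times(-\delta,\delta)$ for suitable $\eps,\delta>0$. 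I claim that the open set $W := \rfunc\bigl(\bR\times (\{0\}\text{-neighborhood of radius }\delta)\bigr)$, more precisely $W=\rfunc(\bR\times (-\delta,\delta)\cap\Jj)$, is an open neighborhood of $x$ contained in $S(\Usp)$. It is open because $\rfunc$ is a homeomorphism onto the open set $\Vsp$, and it contains $x=\rfunc(s',0)$. Finally $W\subseteq S(\Usp)$: any point of $W$ lies on some leaf $\rfunc(\bR\times t)$ with $|t|<\delta$, and that leaf also passes through the point $\rfunc(s,t)\in (s-\eps,s+\eps)\times(\{t\})\subset\rfunc^{-1}(\Usp)$, hence through a point of $\Usp$, so the whole leaf—and in particular the given point—belongs to $S(\Usp)$.

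\medskip

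Since $x\in S(\Usp)$ was arbitrary, $S(\Usp)$ is open, so $\prj(\Usp)$ is open in $\Ysp$, and therefore $\prj$ is an open map. The only mild subtlety, and the step I would be most careful about, is the boundary case $\Jj=[0,1)$: there one must take the neighborhood in $\Jj$ to be the half-interval $[0,\delta)$ rather than a symmetric interval, but the argument is otherwise identical since $\rfunc$ is still a homeomorphism onto an open subset of $\Xsp$ and every leaf $\rfunc(\bR\times t)$ with $t\in[0,\delta)$ still meets $\Usp$. No global hypotheses on $\Xsp$ (connectedness, non-compactness) are needed; the statement is entirely local around a single leaf, so the local triviality condition does all the work.
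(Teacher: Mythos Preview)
Your proof is correct and follows essentially the same strategy as the paper: given $x\in S(\Usp)$, pass to the trivializing chart $\rfunc:\bR\times J\to\Vsp$ around the leaf through $x$, find a small interval in $J$ around $0$ whose corresponding strip of leaves all meet $\Usp$, and take that strip as the desired open saturated neighborhood of $x$. If anything, your write-up is slightly more careful than the paper's, since you explicitly distinguish the point $y\in\elem\cap\Usp$ from $x$ when locating the basic product neighborhood inside $\rfunc^{-1}(\Usp\cap\Vsp)$.
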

\begin{proof}
We have to prove that for each open $\Vsp \subset \Xsp$ its saturation $S(\Vsp)$ is open as well.
Thus for each $x\in S(\Vsp)$ we should find an open saturated subset $\Wsp$ such that $x\in\Wsp=S(\Wsp) \subset S(\Vsp)$.
Let $\omega$ be the leaf containing $x$.
Put $J=[0,1)$ whenever $\omega\subset\partial\Xsp$ and $J=(-1,1)$ otherwise.
Then by definition of class $\classFol$ there exists a foliated homeomorphism $\phi:\bR\times J \to \Usp$ such that $\phi^{-1}(x)=(t,0) \in \bR\times 0$ for some $t\in\bR$.
Then $\phi^{-1}(\Vsp\cap\Usp)$ is an open neighborhood of $(t,0)$, whence there exists $\eps>0$ such that if we denote $K = J \cap (-\eps,\eps)$, then $t\times K \subset \phi^{-1}(\Vsp\cap\Usp)$.
But $K$ is open in $J$, whence $\bR\times K$ is open in $\bR\times J$.
Therefore $\phi(\bR\times K)$ is saturated and open in $\Usp$ which in turn is open in $\Xsp$.
Hence $\phi(\bR\times K)$ is open in $\Xsp$ and $x\in\phi(\bR\times K) \subset S(\Vsp)$.
Therefore $S(\Vsp)$ is open in $\Xsp$.
\end{proof}

\begin{definition}
Let $\omega$ be a leaf of $\Partition$ and $y=\prj(\omega)\in\Ysp$.
We will say that $\omega$ is a \emph{special} leaf and $y$ is a \emph{special} point of $\Ysp$  whenever $\Ysp$ is
not Hausdorff at $y$, that is $y \not = \cap_{y\in \Vsp} \overline{\Vsp}$, where $\Vsp$ runs over all open
neighborhoods of $y$.
\end{definition}

\begin{example}\label{exmp:special_leaves}\rm
Consider the foliation on $\bR^2$ shown in Figure~\ref{fig:example:special_leafs}(a).
It splits by bold leaves $\alpha$, $\beta$, $\gamma$, and $\delta$ into five ``strips'' $A$, $B$, $C$, $D$, $E$ foliated by ``parallel'' lines, see Figure~\ref{fig:example:special_leafs}(b).
Moreover, the space of leaves $\Ysp$ has the structure as in Figure~\ref{fig:example:special_leafs}(c), where bold lines correspond to strips, and thin lines just indicate that $\alpha$ belongs to the closure of $A$ and $B$, $\beta$ belongs to the closures $B$ and $C$ and so on.
In particular, $\Ysp$ looses Hausdorff property at $\alpha$, $\beta$, $\gamma$, and $\delta$.
More precisely, the subspace $\Ysp\setminus\{\alpha,\beta,\gamma,\delta\}$ is Hausdorff, however each neighborhood of $\alpha$ intersect each neighborhood of $\beta$, and the same holds for pairs $\{\beta, \gamma\}$ and $\{\gamma, \delta\}$.
Therefore the leaves $\alpha$, $\beta$, $\gamma$ and $\delta$ are special.
\end{example}

\begin{figure}[ht]
\begin{tabular}{ccccc}
\includegraphics[height=1.7cm]{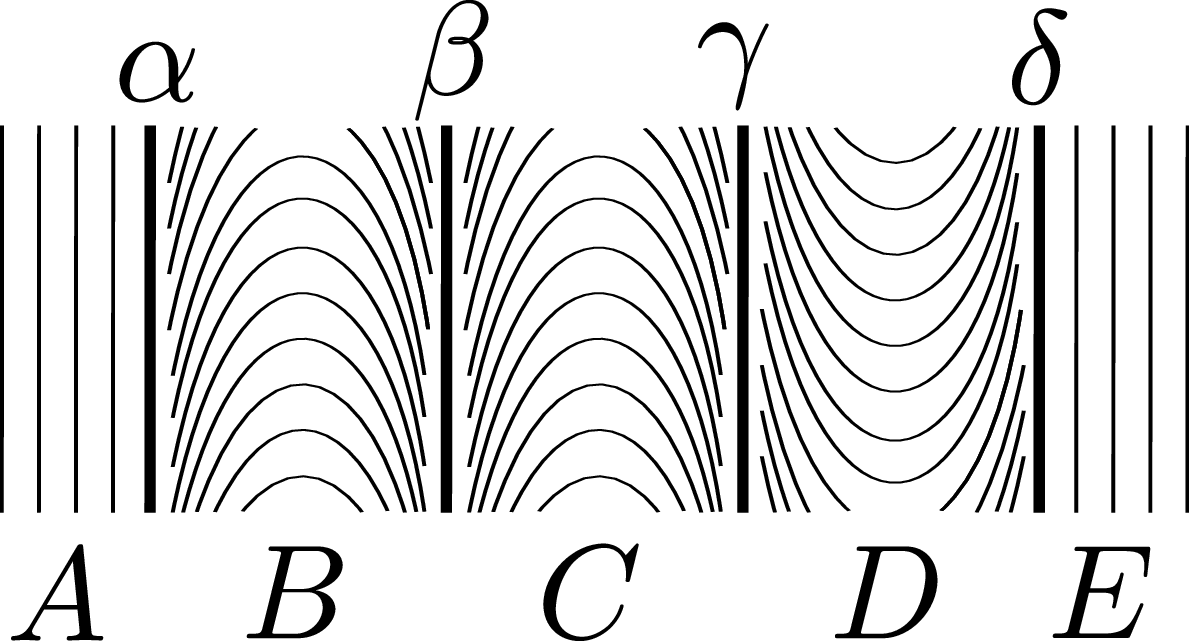} & \qquad &
\includegraphics[height=1.7cm]{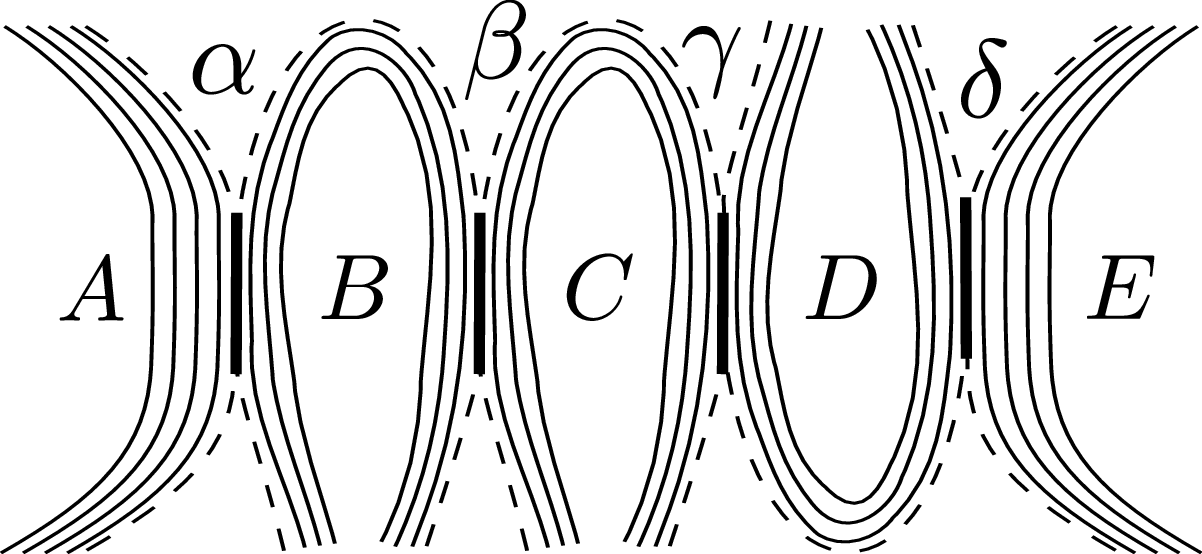} & \qquad &
\includegraphics[height=1.7cm]{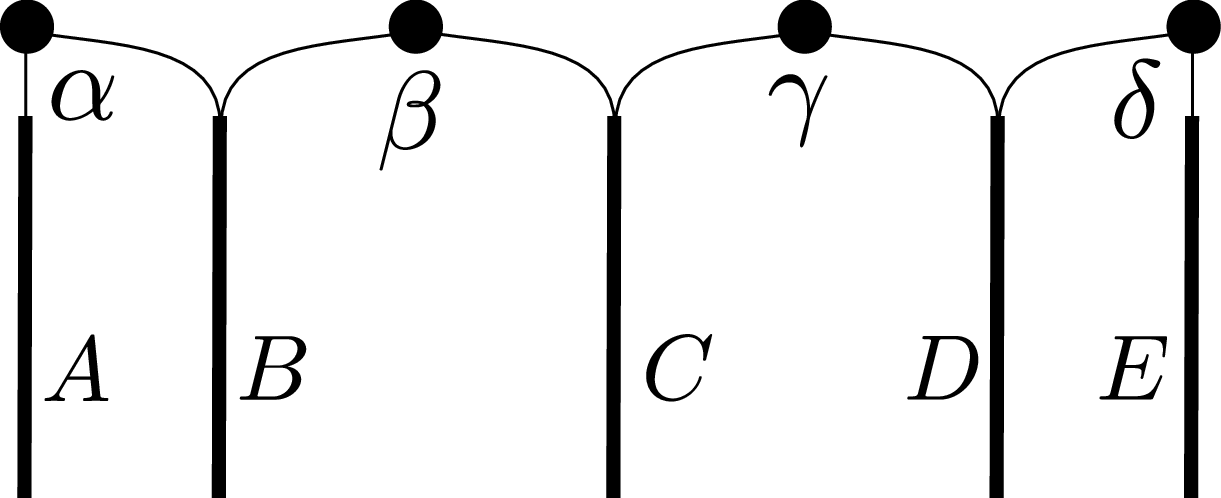} \\
(a) Foliation $\Partition$ & &
(b) Strips decomposition & &
(c) Space of leaves $\Ysp = \Xsp/\Partition$
\end{tabular}
\caption{}\label{fig:example:special_leafs}
\end{figure}

\begin{definition}
A subset $\strip \subset \bR^2$ will be called a \emph{model strip} if there exist $a<b$ such that
\begin{enumerate}
\item[\rm(1)] $\bR\times(a,b) \ \subset \ \strip \ \subset \ \bR\times[a,b]$;
\item[\rm(2)] the intersection $\strip \ \cap \ \bR\times\{a,b\}$ is a disjoint union of open intervals.
\end{enumerate}
Put
\begin{align*}
\partial_{-} \strip &= \strip \cap (\bR \times \lbrace a \rbrace), &
\partial_{+} \strip &= \strip \cap (\bR \times \lbrace b \rbrace), &
\partial S &= \partial_{-} S \cup \partial_{+} S.
\end{align*}
A model strip $\bR\times(a,b)$ will be called \emph{open}.
\end{definition}
Each model strip $\strip$ admits a natural $1$-dimensional foliation into parallel lines $\bR\times t$ and boundary intervals from $\partial\strip$.
We will call this foliation \emph{canonical}.
The following lemma implies that this foliation belongs to class $\classFol$.

\begin{lemma}\label{lm:leaf_shrinking}{\rm
(e.g.~\cite{MaksymenkoPolulyakh:PGC:2015}).}
Let $a<b \in \bR$, $\Xsp = \bR^2\setminus\bigl( (-\infty,a]\cup[b,+\infty) \bigr)$, and $\eps>0$.
Then there exists a homeomorphism $\phi:\bR^2\to\Xsp$ such that
\begin{enumerate}
\item[\rm(a)]
$\phi$ is fixed outside $\bR\times(-\eps,\eps)$;
\item[\rm(b)]
$\phi$ preserves foliations by horizontal lines, that is $\phi(\bR\times t) = t\times\bR$ for $t\not=0$ and $\phi(\bR\times 0) = (a,b)\times 0$, see Figure~\ref{fig:leaf_shrinking}.
\end{enumerate}
\end{lemma}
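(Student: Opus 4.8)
The plan is to build $\phi$ explicitly as a "coordinate-wise" homeomorphism that leaves the vertical coordinate alone outside a thin horizontal band and, on each horizontal line $\bR\times t$, is an orientation-preserving self-homeomorphism of that line whose image is the prescribed interval. Concretely, I would look for $\phi$ in the form $\phi(x,t) = \bigl( f(x,t),\, t \bigr)$, where for each fixed $t$ the map $x\mapsto f(x,t)$ is an increasing homeomorphism. For $|t|\ge\eps$ we need $f(\cdot,t)=\mathrm{id}_{\bR}$, so that condition (a) holds, while for $t=0$ we need $f(\cdot,0):\bR\to(a,b)$ to be an increasing homeomorphism; for intermediate $t$ the image $f(\bR,t)$ should be an open interval $(a(t),b(t))$ that depends continuously on $t$ with $a(0)=a$, $b(0)=b$, and $a(t)=-\infty$, $b(t)=+\infty$ (informally) for $|t|\ge\eps$. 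Note that $\Xsp = \bigcup_{t\in\bR}\bigl( (a(t),b(t))\times t\bigr)$ for the appropriate choice of endpoint functions, so $\phi$ will indeed be a bijection onto $\Xsp$.

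The key steps, in order. First, fix a continuous bump function $\rho:\bR\to[0,1]$ with $\rho(0)=1$ and $\rho(t)=0$ for $|t|\ge\eps$, say $\rho(t) = \max\{0,\,1-|t|/\eps\}$. Second, choose a fixed increasing homeomorphism $g:\bR\to(a,b)$ (for instance an affine rescaling of $\arctan$, $g(x) = \tfrac{a+b}{2} + \tfrac{b-a}{\pi}\arctan x$) and set
\[
f(x,t) \;=\; (1-\rho(t))\, x \;+\; \rho(t)\, g(x).
\]
Third, check that for each fixed $t$ the map $x\mapsto f(x,t)$ is a strictly increasing continuous function of $x$ (it is a convex combination of the increasing homeomorphism $x\mapsto x$ and the increasing homeomorphism $g$, hence has positive "slope" in the sense of being strictly increasing), with $f(\pm\infty,t)$ equal to $\pm\infty$ when $\rho(t)<1$ and equal to the finite values $b,a$ when $\rho(t)=1$, i.e.\ $t=0$. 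Hence $x\mapsto f(x,t)$ is a homeomorphism of $\bR$ onto the open interval $\bigl(a(t),b(t)\bigr)$, where $a(t) = (1-\rho(t))(-\infty)+\rho(t)a$ is $-\infty$ for $t\ne 0$ and $a$ for $t=0$, and similarly for $b(t)$. Fourth, conclude that $\phi(x,t)=(f(x,t),t)$ is a continuous bijection of $\bR^2$ onto $\bigcup_{t}(a(t),b(t))\times t = \Xsp$; its inverse is $\phi^{-1}(y,t) = \bigl( f(\cdot,t)^{-1}(y),\, t\bigr)$, which is continuous because $f$ is jointly continuous and strictly monotone in the first variable (standard fact: a jointly continuous family of increasing homeomorphisms of $\bR$ has jointly continuous inverse). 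Property (a) holds since $\rho(t)=0$ for $|t|\ge\eps$ gives $f(x,t)=x$ there, and moreover $\phi$ fixes the vertical coordinate everywhere. Property (b) holds because $\phi(\bR\times t) = (a(t),b(t))\times t = \bR\times t$ as a set for $t\ne 0$ (with the correct reading: $\phi$ maps the leaf $\bR\times t$ onto the leaf $\bR\times t$, i.e.\ $t\times\bR$ in the paper's notation) and $\phi(\bR\times 0) = (a,b)\times 0$.

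The main obstacle is the verification that $\phi^{-1}$ is continuous, in particular continuity of $(y,t)\mapsto f(\cdot,t)^{-1}(y)$ across $t=0$, where the "width" of the image interval jumps from finite to infinite: one must argue that as $t\to 0$ with $y$ ranging in a compact set, the preimages $f(\cdot,t)^{-1}(y)$ stay bounded and converge to $f(\cdot,0)^{-1}(y)=g^{-1}(y)$. This is handled by the monotone-convergence / uniform-on-compacts argument for inverses of a continuous family of monotone functions; there are no genuine pathologies because $f(x,t)$ is monotone in $x$ for every $t$ and jointly continuous. Everything else is a routine check, and the statement as cited in~\cite{MaksymenkoPolulyakh:PGC:2015} follows.
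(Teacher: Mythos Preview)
The paper does not actually prove this lemma; it merely states it with a citation to~\cite{MaksymenkoPolulyakh:PGC:2015} and moves on. So there is no ``paper's own proof'' to compare against.

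That said, your argument is correct and is the natural way to establish the statement. The form $\phi(x,t)=\bigl((1-\rho(t))x+\rho(t)g(x),\,t\bigr)$ with a bump $\rho$ and an increasing homeomorphism $g:\bR\to(a,b)$ does exactly what is required: for $|t|\ge\eps$ it is the identity, for $t=0$ it collapses $\bR\times0$ onto $(a,b)\times0$, and for intermediate $t$ the convex combination of two strictly increasing functions is strictly increasing and, since $1-\rho(t)>0$, still surjects onto $\bR$. Your identification of the only delicate point---continuity of $\phi^{-1}$ across $t=0$---is accurate, and the compactness/monotonicity argument you sketch goes through: if $(y_n,t_n)\to(y,0)$ with $y\in(a,b)$ and $x_n=f(\cdot,t_n)^{-1}(y_n)$, then $x_n$ cannot escape to $+\infty$ (else $\rho(t_n)g(x_n)\to b>y$ while $(1-\rho(t_n))x_n\ge 0$, contradicting $f(x_n,t_n)=y_n\to y$), nor to $-\infty$ by the symmetric argument; any accumulation point $x^*$ then satisfies $g(x^*)=y$, so $x_n\to g^{-1}(y)$.
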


\begin{figure}[ht]
\includegraphics[height=1.5cm]{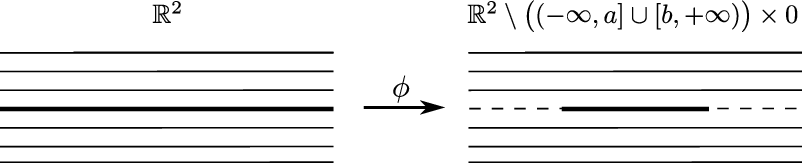}
\caption{}\label{fig:leaf_shrinking}
\end{figure}

\begin{example}
The foliation in Example~\ref{exmp:special_leaves} splits into five model strips such that
\begin{align*}
&A \ \cong \ E \ \cong \ \bR\times(0,1) \ \bigcup \ (0,1) \times 1, \\
& B \ \cong \ C \ \cong \ D \ \cong \ \bR\times(0,1) \ \bigcup \ \bigl( (0,1) \cup (2,3) \bigr) \times 1.
\end{align*}
\end{example}

Let $\bR\times[-1,1]$ be a model strip, $\phi_{+}, \phi_{-}:\bR\times\{-1\} \to \bR\times\{+1\}$ be two homeomorphisms given by
\begin{align*}
\phi_{+}(t,-1) &= (t,1),  &
\phi_{-}(t,-1) &= (-t,1),
\end{align*}
for $t\in\bR$, and $C = \bR\times[-1,1] / \phi_{+}$ and $M = \bR\times[-1,1] / \phi_{-}$ be the quotient spaces.
Thus $C$ (resp. $M$) is obtained from $\bR\times[-1,1]$ by identifying its boundary lines via preserving (resp. reversing) orientation homeomorphism.
Therefore $C$ is a cylinder and $M$ is a M\"obius band.
Moreover, the canonical foliation on $\bR\times[-1,1]$ yields certain foliations $\Partition_C$ and $\Partition_M$ on $C$ and $M$ respectively also belonging to class $\classFol$.
We will call $C$ a \emph{standard cylinder} and $M$ a \emph{standard M\"obius band}.

\subsection*{Foliation associated with a regular function.}
A continuous function $f:\bR^2\to\bR$ will be called \emph{regular} whenever for each $z\in\bR^2$
there are local coordinates $(u,v)$ in which $z=(0,0)$ and $f(u,v) = u+\mathrm{const}$.

It follows that the partition $\Partition$ of $\bR^2$ into connected components of level-sets $f^{-1}(t)$, $t\in\bR$,
of $f$ is a \emph{foliation} is a usual sense, i.e.\! it is \emph{locally} homeomorphic with a partition of $\bR^2$
into parallel lines.
We will say that $\Partition$ is a \emph{foliation associated with $f$}.

Notice that \emph{$f$ has no local extremes}, whence all leaves of $f$ are homeomorphic with $\bR$.
Indeed, if $\Partition$ has a closed leaf $\omega$, then by Jordan theorem $\omega$ bounds a $2$-disk.
Since $f$ is constant on $\omega$, it must have a local extreme inside that disk, which gives a contradiction.

Let $J\subset\bR$ be a connected subset, i.e.\! either open or closed or half-closed interval.
Then by a \emph{cross-section} $\crosssect:J\to\bR^2$ of $\Partition$ we will mean a continuous path intersecting each leaf at most once.
It easily follows that $\crosssect$ is a cross-section if and only if the composition $f\circ\crosssect:J\to\bR$ is strictly monotone.

By a \emph{saturation of a cross-section} $\crosssect:J\to\bR^2$ we will mean the saturation of its image $S(\crosssect(J))$ and denote it simply by $S(\crosssect)$, c.f.~\cite[\S1.4]{Kaplan:DJM:1940}

Kaplan~\cite[Theorem~30]{Kaplan:DJM:1940} proved that for a cross-section $\crosssect:[a,b]\to\bR^2$ of $\Partition$ its saturation $S(\crosssect)$ is foliated homeomorphic with $\bR\times[a,b]$ foliated by parallel lines.
However, this result can be misleading, since $S(\crosssect)$ is not necessarily a closed subset of $\Xsp$.

For instance, consider the foliation in Figure~\ref{fig:example:special_leafs}(b).
Let $\sigma:[a,b]\to\bR^2$ be a cross-section passing through the special leaf $\alpha$ and such that $\sigma(a)\in A$ and $\sigma(b) \in B$.
Then $\overline{S(\crosssect)} \setminus S(\crosssect) = \beta$.

\subsection*{Kaplan's construction.}
In~\cite[Theorem~29]{Kaplan:DJM:1940} W.~Kaplan has shown that the foliation $\Partition$ associated with a regular function $f$ belongs to class $\classFol$.
In fact, he associated to $\Partition$ a family of pairs $\xi = \{(\omega_i,\crosssect_i)\}_{i=-a}^{b}$ for some $a,b\in\bN\cup\{\infty\}$, where
\begin{itemize}
\item [(i)]
$\omega_i$ is a leaf being special for $i\not=0$;
\item[(ii)]
$\crosssect_0:(-1,1)\to\bR^2$, $\crosssect_i:[0,1)\to\bR^2$ for $i>0$, and $\crosssect_i:(-1,0]\to\bR^2$ for $i<0$ are certain proper cross-sections of $\Partition$;
\item[(iii)]
$\crosssect_{i}(0)  \in  \omega_i$ for all $i$,
\begin{align*}
\crosssect_i[0,1) \ \cap \ S \Bigl( \mathop{\cup}\limits_{j=0}^{i-1}\crosssect_j[0,1) \Bigr) &= \crosssect_{i}(0),   & i>0, \\
\crosssect_i(-1,0] \ \cap \ S \Bigl( \mathop{\cup}\limits_{j=i+1}^{0}\crosssect_j(-1,0] \Bigr) &= \crosssect_{i}(0), & i<0.
\end{align*}
\end{itemize}
Kaplan proved that $\xi$ determines $\Partition$ up to a foliated homeomorphism.

As noted above $S(\crosssect_{0})$ is foliated homeomorphic with $\bR\times(0,1)$ while $S(\crosssect_{i})$, $i\not=0$, is foliated homeomorphic with a strip $\bR\times[0,1)$.
Therefore the family $\xi$ determines at most countable family of strips $\{\Vsp_i = S(\crosssect_i)\}$ such that $\Vsp_{i+1}$ is glued to $\Vsp_i$ along the interval $\omega_i$ in their boundaries.

Kaplan's aim was to decrease the family of such strips as much as possible, see first paragraph of~\cite[Section~3.1]{Kaplan:DJM:1940}.
However, the construction of family $\xi$ then becomes ambiguous and depends on a particular choice of special leaves and cross-sections.
This is illustrated in Figure~\ref{fig:example:Kaplan_construction}(b), where two such families for the same foliation
are presented.

\begin{figure}[ht]
\begin{tabular}{ccc}
\includegraphics[height=3cm]{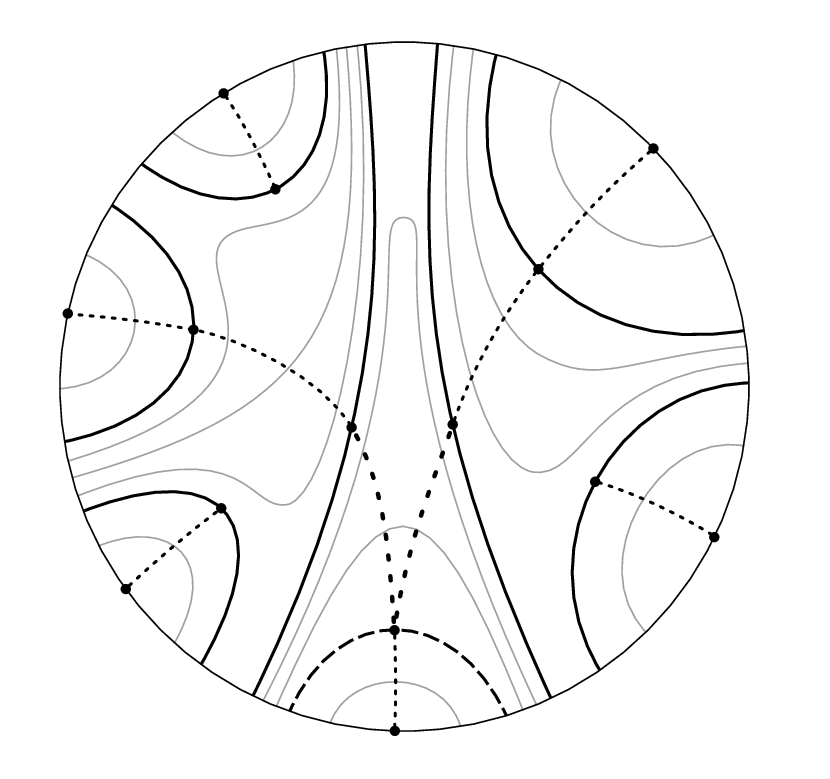} & \qquad\qquad &
\includegraphics[height=3.3cm]{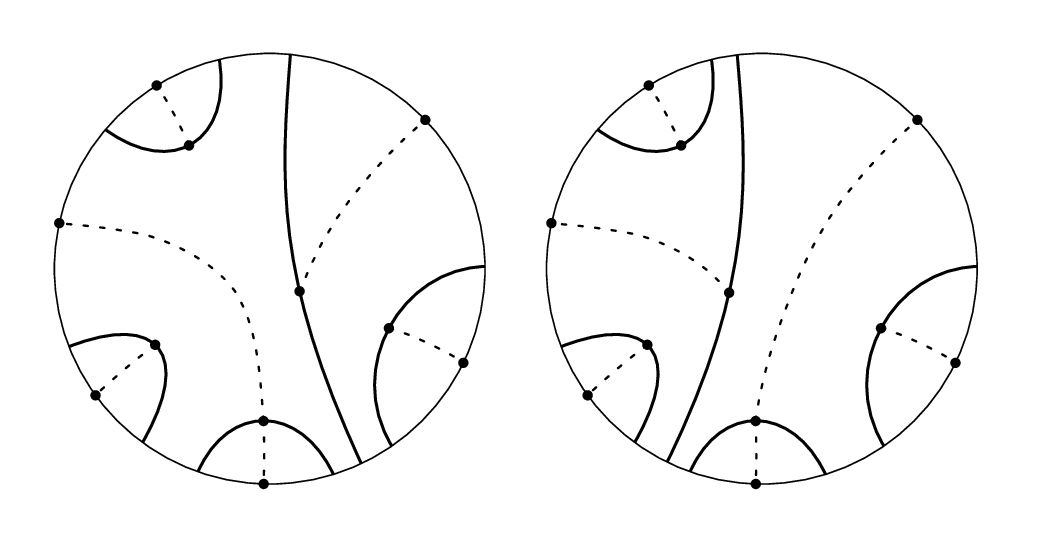} \\
(a) Foliation & &
(b) Two distinct maximal families of cross-sections
\end{tabular}
\caption{}\label{fig:example:Kaplan_construction}
\end{figure}

On the other hand cutting $\bR^2$ along special leaves is an unambiguous procedure and it gives a canonical decomposition of $\bR^2$.

In the present paper we extends Kaplan's results to foliations $\Partition$ from class $\classFol$ on arbitrary surfaces $\Xsp$ and describe the topological structure of connected components of $\Xsp\setminus\Sigma$ and their closures, where $\Sigma$ is the union of all special leaves of $\Partition$.

\begin{theorem}\label{th:open_strips}
Let $\Xsp$ be a connected $2$-dimensional manifold and $\Partition$ be a foliation on $\Xsp$ belonging to class $\classFol$.
Suppose that the family $\Sigma$ of all special leaves of $\Partition$ is locally finite, and let $\Qsp$ be a connected component of $\Xsp\setminus(\Sigma \cup \partial\Xsp)$.
Then the following statements hold true.
\begin{enumerate}
\item\label{th:open_strips:Q}
$\Qsp$ is foliated homeomorphic either with a standard cylinder $C$ or a standard M\"obius band $M$ or an open model strip $\bR\times(-1,1)$.
Moreover, in the first two cases $\Qsp = \Xsp$.

\item\label{th:open_strips:closure_of_Q}
Suppose $\Qsp$ is foliated homeomorphic with an open model strip.
Fix any foliated homeomorphism $\phi:\bR\times(-1,1)\to\Qsp$ and denote
\begin{align*}
\Qmin &= \phi\bigl(\bR\times(-1,0]\bigr), & \Qmax &= \phi\bigl(\bR\times[0,1)\bigr).
\end{align*}
Then the closures $\overline{\Qmin}$ and $\overline{\Qmax}$ are foliated homeomorphic to some model strips.
\end{enumerate}
\end{theorem}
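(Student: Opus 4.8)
The plan is to prove~\eqref{th:open_strips:Q} by analysing the space of leaves of $\Qsp$, and~\eqref{th:open_strips:closure_of_Q} by first localising near the leaves that the closure $\overline{\Qmin}$ adjoins and then gluing the local pictures together, in the spirit of Kaplan's saturation theorem~\cite[Theorem~30]{Kaplan:DJM:1940}.

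\emph{Statement~\eqref{th:open_strips:Q}.} Since $\Qsp$ is open in $\Xsp$ and is a union of leaves, the restriction $\Partition|_{\Qsp}$ again belongs to $\classFol$, its leaf space $\Zsp:=\Qsp/\Partition$ coincides with $\prj(\Qsp)$ carrying the subspace topology from $\Ysp$, and $\Zsp$ is \emph{Hausdorff}: any two leaves $\omega,\omega'\subset\Qsp$ are non-special in $\Ysp$, hence can be separated there by disjoint saturated open sets, which one then intersects with $\Qsp$. Using trivially foliated neighbourhoods and shrinking them inside $\Qsp$ — here local finiteness of $\Sigma$ is used to guarantee that the nearby leaves of a non-special interior leaf are again non-special interior leaves — one checks that $\prj|_{\Qsp}\colon\Qsp\to\Zsp$ is a locally trivial fibre bundle with fibre $\bR$ whose fibres are exactly the leaves, and that $\Zsp$ is a connected, second-countable, Hausdorff $1$-manifold, hence $\Zsp\cong\bR$ or $\Zsp\cong S^{1}$. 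An $\bR$-bundle over $\bR$ is trivial, giving a foliated homeomorphism $\Qsp\cong\bR\times(-1,1)$; over $S^{1}$ there are exactly two $\bR$-bundles, and with the fibres taken as leaves these are precisely the standard cylinder $C$ and the standard M\"obius band $M$. Finally, if $\Zsp\cong S^{1}$ then $\prj(\Qsp)$ is compact, hence closed in the $T_{1}$-space $\Ysp$; since $\Qsp$ is saturated, $\Qsp=\prj^{-1}(\prj(\Qsp))$ is closed as well as open in $\Xsp$, so $\Qsp=\Xsp$ by connectedness.

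\emph{Statement~\eqref{th:open_strips:closure_of_Q}.} By the symmetry $t\mapsto -t$, which interchanges the roles of $\Qmin$ and $\Qmax$, it suffices to treat $\overline{\Qmin}$ for an arbitrary foliated homeomorphism $\phi$, and $\Qmin=\phi(\bR\times(-1,0])$ is already a model strip via $\phi$, with canonical foliation. First one identifies the frontier $\Gamma:=\overline{\Qmin}\setminus\Qmin$. Because $\prj$ is open (Lemma~\ref{lm:classFol_prop}), the closure of a saturated set is saturated, so $\overline{\Qmin}$ is a union of leaves; moreover the components of $\Xsp\setminus(\Sigma\cup\partial\Xsp)$ are relatively clopen, whence $\overline{\Qsp}\setminus\Qsp\subset\Sigma\cup\partial\Xsp$, and $\overline{\Qmin}\cap\Qsp=\Qmin$ because $\Qmin$ is closed in $\Qsp\cong\bR\times(-1,1)$. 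Hence $\Gamma\subset\Sigma\cup\partial\Xsp$ is a saturated, locally finite union of special and/or boundary leaves. Now fix $\omega\subset\Gamma$ and a trivially foliated neighbourhood $\psi\colon\bR\times J\to\Usp$ of $\omega$ with $\psi(\bR\times 0)=\omega$, shrunk so that $\Usp$ avoids the closed leaf $\phi(\bR\times 0)$ and, by local finiteness of $\Sigma$, meets no special or boundary leaf other than $\omega$. Two distinct leaves meeting $\Usp$ would be disjoint yet share a point, so every leaf meeting $\Usp$ lies in $\Usp$; consequently $\Qmin\cap\Usp=\psi(\bR\times T)$ for an open $T\subset J\setminus\{0\}$ with $0\in\overline{T}$ (the latter since $\omega\subset\overline{\Qmin}$), and $\overline{\Qmin}\cap\Usp=\psi(\bR\times\overline{T})$. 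Because the leaves of $\Qmin$ run off to infinity through a single end of $\Qmin\cong\bR\times(-1,0]$, a monotonicity argument for the two coordinatisations of the leaf space near $\omega$ shows that $T$ lies on one side of $0$; thus, after reparametrising $J$, near $\omega$ the set $\overline{\Qmin}$ is a one-sided trivially foliated half-neighbourhood of $\omega$, so $\omega$ enters $\overline{\Qmin}$ exactly as one boundary interval, at the ``$t=-1$'' level, of a model strip.

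It remains to glue these local pictures to the product structure on $\Qmin\cong\bR\times(-1,0]$, whose level $t=0$ is the closed leaf $\phi(\bR\times 0)$ and whose single open end $t\to-1$ is where the leaves of $\Qmin$ escape. Adjoining, for each leaf $\omega\subset\Gamma$, the half-neighbourhood produced above, one constructs a foliated homeomorphism of $\overline{\Qmin}$ onto a subset $\strap\subset\bR\times[-1,0]$ with $\bR\times(-1,0)\subset\strap$ and $\strap\cap(\bR\times\{-1\})$ a disjoint union of open intervals, one per leaf of $\Gamma$ — that is, onto a model strip. This last step is essentially a re-run of Kaplan's cross-section saturation theorem~\cite[Theorem~30]{Kaplan:DJM:1940}, combined with Lemma~\ref{lm:leaf_shrinking}; the new ingredient is the use of local finiteness of $\Sigma$ to control how the leaves $\phi(\bR\times t)$, $t\to-1$, accumulate on the finitely many frontier leaves lying over any compact piece of $\Qmin$, thereby excluding anything worse than a model strip. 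I expect this globalisation and compatibility argument to be the main obstacle; the identification of $\Gamma$ and the local analysis near its leaves are comparatively routine.
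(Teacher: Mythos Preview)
Your outline for~\eqref{th:open_strips:Q} matches the paper's route through the leaf space, but one step is wrong as stated: compact subsets of $T_{1}$-spaces need not be closed (the line with two origins is a counterexample). The conclusion is still correct because every point of $\prj(\Qsp)$ is non-special, so any competing limit $z\in\Ysp$ of a sequence in $\prj(\Qsp)$ must satisfy $z\in\bnd{w}=\{w\}$ for the limit $w\in\prj(\Qsp)$ guaranteed by compactness; this is exactly the argument the paper gives in Lemma~\ref{lm:connected_components_of_nonspec_pts}.

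For~\eqref{th:open_strips:closure_of_Q} your local picture near a frontier leaf $\omega$ is correct --- shrinking $\Usp$ to miss $\Ksp=\phi(\bR\times 0)$ is precisely what forces the one-sidedness of $T$, and the paper reaches the same conclusion by showing $\prj(\Qmin\cup\omega)\cong[0,1]$ via Lemma~\ref{lm:nbh_spec_points}. But the globalisation is where the real content lies, and invoking Kaplan's saturation theorem together with Lemma~\ref{lm:leaf_shrinking} does not suffice. The difficulty is that the foliated homeomorphisms $\psi_i:N\to\Qmin\cup\omega_i$ are mutually incompatible with the given $\phi$ on their overlaps in $\Qmin$, and there may be infinitely many $\omega_i$. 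The paper resolves this with the trapezoid machinery of \S\ref{sect:trapezoids}: for each $\omega_i$ one builds a half-open trapezoid $\trap_i$ with base the boundary interval (Lemma~\ref{lemma:graph_neighborhood}), then a straightening procedure (Proposition~\ref{prop:trap_make_rectangles}, proved by an infinite composition of level-preserving homeomorphisms) simultaneously turns all the images $\strap_i=\phi^{-1}\circ\psi_i(\trap_i)$ into rectangles in $\bR\times(-1,0]$. Only after this can the local collars be matched to $\phi$ via level-preserving extensions of roof maps (Lemma~\ref{lm:level_pres_homeo_of_roofs}) and assembled using locally finite closed covers (Lemmas~\ref{lm:hat_strapi_is_loc_finite}--\ref{lm:psii_trap_is_loc_finite} together with~(\ref{enum:lm:prop:homeomorphism}) of Lemma~\ref{lm:prop}). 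This straightening step is the missing idea in your proposal; neither Kaplan's theorem nor Lemma~\ref{lm:leaf_shrinking} produces it.
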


This theorem implies that the topological structure of the foliation $\Partition\in\classFol$ is uniquely determined by the combinatorics of gluing model strips.
Also notice that the intersection $(\overline{\Qmin}\cap\overline{\Qmax}) \setminus \phi(\bR\times0)$ can be non-empty, whence one can not expect that $\overline{\Qsp}=\overline{\Qmin}\cup\overline{\Qmax}$ is homeomorphic with a model strip.

The proof of Theorem~\ref{th:open_strips} will be given in \S\ref{sect:proof:1:th:open_strips} and~\ref{sect:proof:2:th:open_strips}.

\section{Special points of non-Hausdorff spaces}\label{sect:spec_points}

Throughout this section $\Ysp$ be a topological space.

\begin{definition}\label{def:spec_point}
Let $y\in \Ysp$ and $\beta_{y}$ be the family of all neighborhoods of $y$.
Then the following set
\[
\bnd{y} := \mathop{\cap}\limits_{\Vsp\in\beta_{y}} \overline{\Vsp}
\]
will be called the \emph{Hausdorff closure} of $y$.
We will say that $y$ is a \emph{special} point of $\Ysp$ whenever $y\not=\bnd{y}$.
The set of all special points of $\Ysp$ will be denoted by $\YspecPtSet$.
\end{definition}

Notice that $\Ysp$ is Hausdorff if and only if $y=\bnd{y}$ for all $y\in\Ysp$, i.e.\! when $\YspecPtSet=\varnothing$.

\begin{lemma}\label{lm:spec_points}
\begin{enumerate}
\item\label{enum:lm:prop:symm_pt_bnd1}
Let $y,z\in\Ysp$.
Then $y \in \bnd{z}$ if and only if $z\in \bnd{y}$, however, in general, $\bnd{y} \not=\bnd{z}$.

\item\label{enum:lm:prop:img_of_bnd_pt1}
Let $f:\Ysp \to \Zsp$ be a continuous map into a Hausdorff topological space $\Zsp$.
Then $f(\bnd{y}) = f(y)$ for all $y\in\Ysp$.

\item\label{enum:lm:prop:compl_to_spec_Hausdorff1}
The set $\Ysp\setminus\YspecPtSet$ of all non-special points is Hausdorff.
\end{enumerate}
\end{lemma}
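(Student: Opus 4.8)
The plan is to prove each of the three items in turn, since they are logically independent but build on the notion of Hausdorff closure.

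\smallskip

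\noindent\textbf{Item \eqref{enum:lm:prop:symm_pt_bnd1}.} First I would unwind the definition. The statement $y\in\bnd{z}$ means that $y\in\overline{\Vsp}$ for every neighbourhood $\Vsp$ of $z$; equivalently, every neighbourhood $\Usp$ of $y$ meets every neighbourhood $\Vsp$ of $z$. This condition is manifestly symmetric in $y$ and $z$, which gives the equivalence $y\in\bnd{z}\iff z\in\bnd{y}$ at once. For the asymmetry claim $\bnd{y}\ne\bnd{z}$ in general, it suffices to exhibit an example, e.g.\ the line with two origins, or more simply the space $\Ysp$ of Example~\ref{exmp:special_leaves}: there $\alpha\in\bnd{\beta}$ and $\beta\in\bnd{\gamma}$, so $\bnd{\beta}$ contains $\alpha$ but $\bnd{\gamma}$ need not, and a glance at the space of leaves confirms $\bnd{\beta}\ne\bnd{\gamma}$. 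So this item is essentially a reformulation plus a counterexample; no real obstacle.

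\smallskip

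\noindent\textbf{Item \eqref{enum:lm:prop:img_of_bnd_pt1}.} Let $y\in\Ysp$ and pick any $z\in\bnd{y}$. I must show $f(z)=f(y)$. Suppose not; since $\Zsp$ is Hausdorff, choose disjoint open sets $P\ni f(y)$ and $Q\ni f(z)$. Then $\Usp:=f^{-1}(P)$ is a neighbourhood of $y$ and $\Vsp:=f^{-1}(Q)$ is a neighbourhood of $z$ with $\Usp\cap\Vsp=\varnothing$. But $z\in\bnd{y}$ forces $z$ to lie in $\overline{\Usp}$ (taking $\Usp$ as the neighbourhood of $y$ in the definition of $\bnd{y}$), so the neighbourhood $\Vsp$ of $z$ must meet $\Usp$ — contradiction. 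Hence $f(z)=f(y)$, and since $f(y)\in f(\bnd{y})$ trivially, we get $f(\bnd{y})=f(y)$.

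\smallskip

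\noindent\textbf{Item \eqref{enum:lm:prop:compl_to_spec_Hausdorff1}.} This is the one requiring the most care. Let $H=\Ysp\setminus\YspecPtSet$ with the subspace topology, and take two distinct points $y,z\in H$. I want disjoint $H$-open neighbourhoods. Since $y\notin\YspecPtSet$ we have $\bnd{y}=\{y\}$ (as a subset of $\Ysp$), so in particular $z\notin\bnd{y}$; by definition of $\bnd{y}$ this yields a neighbourhood $\Vsp$ of $y$ in $\Ysp$ with $z\notin\overline{\Vsp}$. Then $\Ysp\setminus\overline{\Vsp}$ is an open neighbourhood of $z$ disjoint from $\Vsp$, and intersecting both with $H$ separates $y$ and $z$ inside $H$. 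The subtle point I should flag is that $\bnd{y}$ is computed using \emph{all} neighbourhoods of $y$ in $\Ysp$, not in $H$; but that only makes the separating set easier to find, since an $\Ysp$-neighbourhood of $y$ restricts to an $H$-neighbourhood. The main obstacle — really the only place one can slip — is keeping straight whether closures and neighbourhoods are taken in $\Ysp$ or in the subspace $H$, and checking that the non-specialness of $y$ (equivalently $\bnd{y}=\{y\}$) is genuinely a statement about $\Ysp$ that still does the job after restriction. Once that bookkeeping is done, all three parts are short.
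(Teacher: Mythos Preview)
Your proposal is correct and follows essentially the same approach as the paper: in each item you unwind the definition of $\bnd{y}$ to the symmetric condition ``every neighbourhood of $y$ meets every neighbourhood of $z$'' and then argue exactly as the authors do (separating via Hausdorffness of $\Zsp$ in~(2), and using $\bnd{y}=\{y\}$ to find a neighbourhood $\Vsp$ of $y$ with $z\notin\overline{\Vsp}$ in~(3)). Your version is in fact slightly more explicit than the paper's --- you supply a concrete counterexample for $\bnd{y}\ne\bnd{z}$ and spell out the subspace-topology bookkeeping in~(3) --- but the arguments are the same.
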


\begin{proof}
(\ref{enum:lm:prop:symm_pt_bnd1})
Suppose $y\in\bnd{z} = \mathop{\cap}\limits_{\Vsp\in\beta_{z}} \overline{\Vsp}$, that is $y$ belongs to the closure of each neighborhood of $z$ which means in turn that every neighborhood of $y$ intersect every neighborhood of $z$.
The latter property is symmetric with respect to $y$ and $z$, whence $z\in\bnd{y}$ as well.

\smallskip

(\ref{enum:lm:prop:img_of_bnd_pt1})
Suppose $z\in \bnd{y}$ but $f(y) \not= f(z)$.
Since $\Zsp$ is Hausdorff, there exist open disjoint neighborhoods $\Wsp_{f(y)}$ and $\Wsp_{f(z)}$ of points $f(y)$ and $f(z)$.
But then their inverses $\Vsp_y = f^{-1}(\Wsp_{f(y)})$ and $\Vsp_z = f^{-1}(\Wsp_{f(z)})$ are disjoint open neighborhoods of $y$ and $z$ respectively.
Hence $z$ can not belongs to $\bnd{y}$ which contradicts to the assumption.

\smallskip

(\ref{enum:lm:prop:compl_to_spec_Hausdorff1})
Let $y,z\in\Ysp\setminus\YspecPtSet$ be two distinct points.
Thus $y=\bnd{y} \not =z$ and so there exist disjoint neighborhoods $\Vsp_y$ and $\Vsp_z$ of $y$ and $z$ respectively.
This implies that $\Ysp\setminus\YspecPtSet$ is Hausdorff.
\end{proof}

\subsection*{Non-Hausdorff one-dimensional manifolds}
Let $\Ysp$ be a $T_1$-topological space locally homeomorphic with open sets of $[0,1)$.
Notice that we allow $\Ysp$ to be non-Hausdorff.
Then as usual the set of points having an open neighborhood homeomorphic with $(0,1)$ will be denoted by $\Int{\Ysp}$ and called the \emph{interior} of $\Ysp$, while its complement $\partial\Ysp:=\Ysp\setminus\Int{\Ysp}$ will be called the \emph{boundary} of $\Ysp$.

\begin{lemma}\label{lm:connected_components_of_nonspec_pts}
Suppose that the set $\YspecPtSet$ of special points of $\Ysp$ is locally finite.
Then every connected component $\Wsp$ of $\Ysp\setminus\YspecPtSet$ is open in $\Ysp$ and is homeomorphic with one of the following spaces: $[0,1)$, $(0,1)$, $[0,1]$, $S^1$.
In the last two cases, i.e.\! when $\Wsp$ is compact, $\Wsp$ is a connected component of $\Ysp$.

Every connected component of $\Ysp\setminus(\YspecPtSet\cup\partial\Ysp)$ is homeomorphic with $(0,1)$.
\end{lemma}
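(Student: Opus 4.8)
The plan is to classify connected $1$-manifolds (possibly non-Hausdorff, possibly with boundary modeled on $[0,1)$) whose set of special points is locally finite, and then peel off the part involving the boundary. First I would recall the classical fact that a \emph{connected Hausdorff} $1$-manifold with boundary modeled on $[0,1)$ is homeomorphic with exactly one of $(0,1)$, $[0,1)$, $[0,1]$, or $S^1$; this is standard and the Hausdorff hypothesis is essential. So the strategy for the first assertion is: given a connected component $\Wsp$ of $\Ysp\setminus\YspecPtSet$, show it is open in $\Ysp$ and Hausdorff, then invoke this classification; finally, when $\Wsp$ is compact, show it is all of its component in $\Ysp$.

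For openness: since $\YspecPtSet$ is locally finite, it is closed in $\Ysp$ (a locally finite set in a $T_1$-space is closed), hence $\Ysp\setminus\YspecPtSet$ is open, and its connected components are open because $\Ysp$, being locally homeomorphic to an interval, is locally connected. For the Hausdorff property I would \emph{not} merely cite Lemma~\ref{lm:spec_points}(\ref{enum:lm:prop:compl_to_spec_Hausdorff1}): that gives that $\Ysp\setminus\YspecPtSet$ is Hausdorff as a subspace, but I must also be sure that a connected component $\Wsp$ of it, taken with the subspace topology, is an honest $1$-manifold with the stated models — which it is, since being a $1$-manifold is a local property inherited by open subsets. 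Thus $\Wsp$ is a connected Hausdorff $1$-manifold modeled on $[0,1)$, and the classification applies. If $\Wsp$ is compact (the cases $[0,1]$ or $S^1$), then $\Wsp$ is closed in $\Ysp$ as well (compact subsets of a $T_1$-space are closed), so $\Wsp$ is a nonempty clopen subset of $\Ysp$, hence a connected component of $\Ysp$ — which also forces $\YspecPtSet\cap\Wsp=\varnothing$ consistently and shows $\Ysp$ need not be connected.

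For the second assertion, let $\Wsp'$ be a connected component of $\Ysp\setminus(\YspecPtSet\cup\partial\Ysp)$. The set $\partial\Ysp$ is closed in $\Ysp$ (its complement $\Int\Ysp$ is open by definition), so $\YspecPtSet\cup\partial\Ysp$ is closed, $\Wsp'$ is open, and $\Wsp'$ is contained in some component $\Wsp$ of $\Ysp\setminus\YspecPtSet$. Now $\Wsp'=\Wsp\setminus\partial\Ysp=\Wsp\cap\Int\Ysp=\Int\Wsp$, the interior of the $1$-manifold $\Wsp$. From the classification of $\Wsp$: if $\Wsp\cong S^1$ or $(0,1)$ then $\Int\Wsp=\Wsp$ is connected and homeomorphic with $S^1$ or $(0,1)$; but the $S^1$ case is impossible here, since a component homeomorphic with $S^1$ is a whole component of $\Ysp$ with empty boundary, yet then it would also be a component of $\Ysp\setminus(\YspecPtSet\cup\partial\Ysp)$ homeomorphic with $S^1$ rather than $(0,1)$ — so I should either absorb $S^1$ by noting $S^1$ is not among the allowed answers and argue it cannot occur as a $\Wsp$ containing a point of $\Int\Ysp$ that is isolated from $\partial\Ysp$...

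Let me restructure that last point, since it is the only genuinely delicate step. The cleanest argument: $\Wsp'=\Int\Wsp$, and for each model we compute $\Int\Wsp$: $\Int[0,1)=(0,1)$, $\Int(0,1)=(0,1)$, $\Int[0,1]=(0,1)$, $\Int S^1=S^1$. So a priori $\Wsp'\cong(0,1)$ unless $\Wsp\cong S^1$. But if $\Wsp\cong S^1$, then $\partial\Wsp=\varnothing$, so $\Wsp\subset\Int\Ysp$, hence $\Wsp'=\Wsp\cong S^1$; this is exactly the case the statement excludes, so strictly the statement as phrased is asserting it does not arise — and indeed it does arise as a component of $\Ysp\setminus(\YspecPtSet\cup\partial\Ysp)$ only when it is a whole component of $\Ysp$. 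I expect the intended reading is that such circle components are already accounted for as components of $\Ysp$ itself and the ``genuinely boundary-adjacent'' pieces are all arcs; so the honest statement to prove is: \emph{if $\Wsp'$ meets the closure of $\partial\Ysp$, or more simply if $\Ysp$ has nonempty boundary along $\Wsp$,} then $\Wsp'\cong(0,1)$, which follows from the computation $\Int[0,1)=\Int[0,1]=(0,1)$. I would present it this way, flagging that the compact-component cases of the first part are exactly those excluded here. The main obstacle is thus not topological depth but bookkeeping: correctly handling the interaction between ``component of $\Ysp\setminus\YspecPtSet$'' and ``component of $\Ysp\setminus(\YspecPtSet\cup\partial\Ysp)$'' and making sure the $S^1$ case is correctly corralled.
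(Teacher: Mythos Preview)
Your argument has a genuine gap in the step showing that a compact component $\Wsp$ is closed in $\Ysp$. You write that ``compact subsets of a $T_1$-space are closed,'' but this is false: the line with two origins is $T_1$, and a closed interval containing one origin is compact but not closed, since the other origin lies in its closure. This is precisely the phenomenon at play here, since $\Ysp$ is allowed to be non-Hausdorff. The paper's proof handles this correctly: given a sequence $\{y_i\}\subset\Wsp$ converging to some $z\in\Ysp$, compactness of $\Wsp$ gives a subsequential limit $y\in\Wsp$, and then every neighborhood of $y$ meets every neighborhood of $z$, so $z\in\bnd{y}$. The crucial point is that $y\in\Wsp\subset\Ysp\setminus\YspecPtSet$ is \emph{non-special}, hence $\bnd{y}=\{y\}$ and $z=y\in\Wsp$. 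Your shortcut bypasses exactly the place where the special/non-special distinction does the work.

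Regarding the last assertion, you are right to flag the $S^1$ case as problematic for the statement as literally written; the paper simply leaves this sentence to the reader and in the application (Lemma~\ref{lm:local_prop}) treats the circle case separately. Your computation $\Int\Wsp\cong(0,1)$ in the three arc cases is the intended content, and your bookkeeping there is fine.
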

\begin{proof}
Since $\Ysp$ is a $T_1$-space, every point $y\in\Ysp$ is a closed subset.
Also since $\YspecPtSet$ is locally finite, it follows that $\YspecPtSet$ is a closed subset,
whence by~(\ref{enum:lm:prop:compl_to_spec_Hausdorff1})
of Lemma~\ref{lm:spec_points} $\Ysp\setminus\YspecPtSet$ is a Hausdorff topological space locally homeomorphic with $[0,1)$.
Hence every connected component $\Wsp$ of $\Ysp\setminus\YspecPtSet$ is a one-dimensional manifold and so it is homeomorphic with one of the spaces $[0,1)$, $(0,1)$, $[0,1]$, $S^1$.
Moreover, since $\Ysp\setminus\YspecPtSet$ is locally connected, we obtain that $\Wsp$ is open in $\Ysp\setminus\YspecPtSet$ and therefore in $\Ysp$ as well.

Suppose $\Wsp$ is compact, i.e.\! it is homeomorphic either with $[0,1]$ or with $S^1$.
Let us show that then $\Wsp$ is also closed in $\Ysp$.
This will imply that $\Wsp$ is a connected component of $\Ysp$.

Let $\{y_i\}_{i\in\bN} \subset \Wsp$ be a sequence converging to some $z\in\Ysp$.
We should prove that $z\in\Wsp$.
Since $\Wsp$ is compact, that sequence also converges to some $y\in\Wsp$.
Hence if $\Vsp_y$ and $\Vsp_z$ are any two open neighborhoods of $y$ and $z$ respectively, then there exists $n>0$ such that $y_n\in\Vsp_y\cap\Vsp_z$.
Thus $\Vsp_y\cap\Vsp_z\not=\varnothing$, which implies that $z\in\bnd{y} = \{y\}$, that is $z=y\in\Wsp$.

We leave the last statement for the reader.
\end{proof}

Suppose $\Ysp$ is connected and not homeomorphic with a circle.
Let $\{ \Wsp_{\ai} \}_{\ai\in\aSet}$ be the family of all connected components of $\YspecPtSet \cup \partial\Ysp$.
Then due to Lemma~\ref{lm:connected_components_of_nonspec_pts} for each $\ai\in\aSet$ there exists a homeomorphism $\phi_{\ai}:(-1,1) \to \Wsp_{\ai}$.
Consider the following collection of subsets:
\begin{align*}
\bSet &= \bigl\{ \phi_{\ai}(-1,-\onehalf], \ \phi_{\ai}[\onehalf, 1) \bigr\}_{\ai\in\aSet}\,.
\end{align*}
Let $\Jint\in\bSet$.
Then we denote $\JOint := \phi_{\ai}(-1,-\onehalf)$ if $\Jint= \phi_{\ai}(-1,-\onehalf]$, and $\JOint:=\phi_{\ai}(\onehalf,1)$ if $\Jint= \phi_{\ai}[\onehalf,1)$ for some $\ai\in\aSet$.
Thus each $\Jint\in\bSet$ is homeomorphic with a half-open segment $[0,1)$, and $\JOint$ is the subset of $\Jint$ corresponding to $(0,1)$.

\begin{lemma}\label{lm:nbh_spec_points}
Let $y\in\partial\Ysp$.
Then there exists a unique $\Jint\in\bSet$ such that $y\in\overline{\Jint}$.
In this case $\Vsp:=\{y\} \cup \JOint$ is an open neighborhood of $y$ and there exists a homeomorphism $\psi:[0,1) \to \{y\} \cup \Jint$ such that $\psi(0)=y$ and $\psi(0,1)=\Vsp$.

Suppose $y\in\YspecPtSet\setminus\partial\Ysp$.
Then there exist two distinct elements $\JA, \JB\in\bSet$ such that $y\in\overline{\JA}\cap\overline{\JB}$ and $y\not\in\overline{\JC}$ for all other $\JC\in\bSet$.
Moreover, the set $\Vsp:=\JOA \cup \{y\} \cup \JOB$ is an open neighborhood of $y$ and there exists a homeomorphism $\mu:[-1,1] \to \JA \cup \{y\} \cup \JB$ such that $\psi(0)=y$ and $\psi(-1,1)=\Vsp$.
\end{lemma}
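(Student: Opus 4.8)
## Proof proposal for Lemma \ref{lm:nbh_spec_points}

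\textbf{The plan is to} work locally around the given boundary or special point $y$, using the local model of $\Ysp$ as an open subset of $[0,1)$ together with the hypothesis that $\YspecPtSet$ is locally finite. The key observation is that near $y$ the space $\Ysp$ looks like finitely many copies of half-open intervals glued together only at $y$, and that the half-intervals $\JOint$ defined before the lemma are exactly the pieces of the components of $\Ysp\setminus\YspecPtSet$ that accumulate to points of $\YspecPtSet\cup\partial\Ysp$.

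\textbf{First I would} treat the case $y\in\partial\Ysp$. By Lemma~\ref{lm:connected_components_of_nonspec_pts}, the connected component $\Wsp$ of $\YspecPtSet\cup\partial\Ysp$ containing $y$ is homeomorphic to $(-1,1)$ via some $\phi_\ai$; without loss of generality $y=\phi_\ai(c)$ for some $c\in(-1,1)$. Since $\YspecPtSet$ is locally finite and $y$ is a boundary point, I would first argue that $y$ has a neighborhood in $\Ysp$ homeomorphic to $[0,1)$: indeed $y$ has \emph{some} neighborhood homeomorphic to an open subset of $[0,1)$ containing the image of $0$ (because $y\notin\Int\Ysp$), and shrinking it we may assume it meets $\YspecPtSet$ only in $\{y\}$ if $y\in\YspecPtSet$ and not at all otherwise. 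Call this neighborhood $N\cong[0,1)$ with $y\mapsto 0$. Then $N\setminus\{y\}$ is connected (homeomorphic to $(0,1)$) and lies in $\Ysp\setminus\YspecPtSet$, hence is contained in a single connected component of $\Ysp\setminus\YspecPtSet$; by Lemma~\ref{lm:connected_components_of_nonspec_pts} that component is an arc or half-open interval or circle, and matching it up with the half-intervals in $\bSet$ (the two ``ends'' $\phi_\ai(-1,-\onehalf]$ and $\phi_\ai[\onehalf,1)$ attached to each $\Wsp_\ai$) identifies the unique $\Jint\in\bSet$ with $y\in\overline{\Jint}$ and $\JOint\subset N\setminus\{y\}$. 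Uniqueness follows because a second such $\Jint'$ would force a second component of $\Ysp\setminus\YspecPtSet$ to accumulate at $y$, i.e.\ $N\setminus\{y\}$ would be disconnected. Finally $\{y\}\cup\JOint$ is open (it is the interior of the closed arc $\{y\}\cup\Jint$, or directly it contains the open set $N$), and the homeomorphism $\psi$ is obtained by composing the parametrization of $\Jint$ with the point $y$ glued at the closed end.

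\textbf{Next I would} handle $y\in\YspecPtSet\setminus\partial\Ysp$. Now $y\in\Int\Ysp$, so $y$ has a neighborhood $N\cong(-1,1)$ with $y\mapsto 0$, and since $\YspecPtSet$ is locally finite we may shrink $N$ so that $N\cap\YspecPtSet=\{y\}$. Then $N\setminus\{y\}$ has exactly two connected components, each homeomorphic to an open interval and each contained in $\Ysp\setminus\YspecPtSet$; arguing as above, each is contained in a distinct component of $\YspecPtSet\cup\partial\Ysp$ (distinct because $y$ itself separates them inside $N$, and because $y\in\YspecPtSet$ means $y$ cannot be an interior point of a single component of $\Ysp\setminus\YspecPtSet$), and each determines exactly one $\Jint\in\bSet$, call them $\JA$ and $\JB$, with $y\in\overline{\JA}\cap\overline{\JB}$ and $\JOA\cup\JOB\subset N\setminus\{y\}$. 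The same connectedness counting shows $y\notin\overline{\JC}$ for any other $\JC\in\bSet$. Then $\Vsp=\JOA\cup\{y\}\cup\JOB$ is open (it contains $N$, in fact equals it on the relevant portion), and $\mu:[-1,1]\to\JA\cup\{y\}\cup\JB$ is assembled by gluing the parametrizations of $\JA$ and $\JB$ at their closed ends to the common point $y$.

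\textbf{The main obstacle} I anticipate is the bookkeeping in identifying $N\setminus\{y\}$ (or each of its two halves) with precisely one element of $\bSet$ and ruling out any others: one must use simultaneously that $\YspecPtSet$ is closed (by local finiteness and $T_1$), that components of $\Ysp\setminus\YspecPtSet$ are open in $\Ysp$ (Lemma~\ref{lm:connected_components_of_nonspec_pts}), and that distinct components $\Wsp_\ai$ of $\YspecPtSet\cup\partial\Ysp$ contribute distinct half-intervals to $\bSet$. Care is needed because a single component $\Wsp_\ai\cong(-1,1)$ could a priori have \emph{both} of its ends accumulating at the same special point $y$ (which is why $\bSet$ records the two halves $\phi_\ai(-1,-\onehalf]$ and $\phi_\ai[\onehalf,1)$ separately rather than recording $\Wsp_\ai$ itself); the argument must therefore phrase everything in terms of the elements of $\bSet$ and the germ of $\Ysp\setminus\YspecPtSet$ at $y$, not in terms of the $\Wsp_\ai$. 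Once the correspondence between the local half-discs at $y$ and the elements of $\bSet$ is pinned down, openness of $\Vsp$ and the existence of $\psi$ (resp.\ $\mu$) are routine: they are just reparametrizations of closed arcs with one (resp.\ two) endpoint(s) re-attached at $y$.
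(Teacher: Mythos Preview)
Your overall strategy coincides with the paper's: choose a small neighborhood $N$ of $y$ homeomorphic to $(-1,1)$ (resp.\ $[0,1)$) meeting $\YspecPtSet$ only in $\{y\}$, observe that $N\setminus\{y\}$ has two (resp.\ one) connected components, locate each component inside some $\JOint$ with $\Jint\in\bSet$, and then glue parametrizations. The paper carries this out almost word for word.

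There is, however, one genuine gap in your sketch. In the interior case you assert that the two halves of $N\setminus\{y\}$ land in \emph{distinct} elements $\JA,\JB\in\bSet$, and you justify this by saying ``$y$ separates them inside $N$'' and ``$y$ cannot be an interior point of a single component of $\Ysp\setminus\YspecPtSet$''. Neither of these statements rules out $\JA=\JB$: the two halves $A,B$ of $N\setminus\{y\}$ are disjoint, but they could a priori sit inside the \emph{same} half-interval $\JOint$ as two disjoint subarcs. (You yourself flag this in your ``main obstacle'' paragraph, but you do not actually close the gap.) The paper handles this with a short sublemma whose key input you are missing: because $y\in\overline{A}$ but $y\notin\JA$, the set $A$ cannot lie in any compact subset of $\JA$; hence under a parametrization $\ahom:[-1,0)\to\JA$ one has $\ahom^{-1}(A)=[a,0)$ for some $a\in(-1,0)$, i.e.\ $A$ reaches the open end of $\JA$. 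Likewise $B$ reaches the open end of $\JB$. If $\JA=\JB$, composing the two parametrizations gives a self-homeomorphism forcing these two ``end-reaching'' arcs to overlap, contradicting $A\cap B=\varnothing$. This end-reaching argument is the one missing idea; once you add it, your outline becomes the paper's proof.

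A minor remark: you write ``the connected component $\Wsp$ of $\YspecPtSet\cup\partial\Ysp$ containing $y$'', but the $\Wsp_\alpha$ are the components of $\Ysp\setminus(\YspecPtSet\cup\partial\Ysp)$, so $y$ lies in none of them. This slip does not affect the substance of your plan, but it propagates into your boundary-case discussion (where you write ``$y=\phi_\alpha(c)$''), so it is worth correcting before filling in details.
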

\begin{proof}
We will consider only the case $y\in\YspecPtSet\setminus\partial\Ysp$.
Notice that the family $\{\phi_{\ai}[-1,1]\}_{\ai\in\aSet}$ is locally finite and consists of closed sets.
Therefore its union $\Zsp = \mathop{\cup}\limits_{\ai\in\aSet} \phi_{\ai}[-1,1]$ is closed.
Hence the set $T = (\YspecPtSet\setminus\{y\}) \cup \partial\Ysp \cup \Zsp$ is closed and does not contain $y$.
Therefore there exits a neighborhood $\Jj\subset \Ysp\setminus T$ of $y$ and a homeomorphism $\mu: [-\onehalf,\onehalf] \to \Jj$ such that $\mu(0)=y$ and $\mu(-\onehalf,\onehalf)$ is an open neighborhood of $y$.

Notice that $\Jj\setminus\{y\}$ consists of exactly two connected components $\Ac=\mu[-\onehalf,0)$ and $\Bc=\mu(0,\onehalf]$, and is contained in $\Ysp\setminus \bigl( \YspecPtSet \cup \partial\Ysp \cup \Zsp\bigr) = \mathop{\cup}\limits_{\Jint\in\bSet}\JOint$.
Hence $\Ac \subset \JOA$ and $\Bc \subset \JOB$ for some $\JA, \JB\in\bSet$, see Figure~\ref{fig:spec_pt_nbh}.

\begin{figure}[ht]
\includegraphics[height=2.3cm]{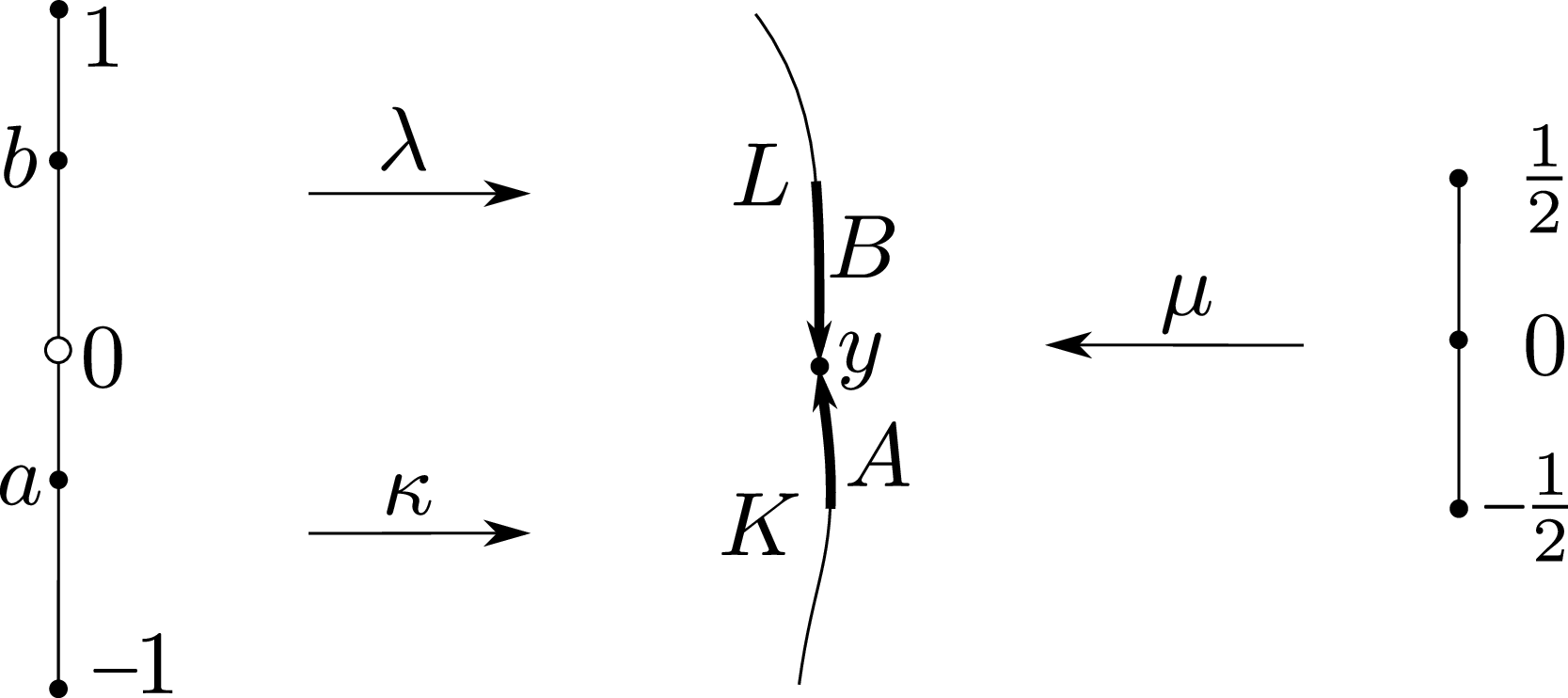}
\caption{}\label{fig:spec_pt_nbh}
\end{figure}

Moreover, any other neighborhood of $y$ intersects both $\Ac$ and $\Bc$ and therefore both $\JA$ and $\JB$.
Hence $y\in\overline{\JA}\cap\overline{\JB}$ and $y\not\in\overline{\JC}$ for all other $\JC\in\bSet$ distinct from $\JA$ and $\JB$.

Fix any homeomorphisms $\ahom:[-1,0) \to \JA$ and $\bhom:(0,1] \to \JB$.
Notice that $\Ac$ is not contained in any compact subset $P$ of $\JA$, since otherwise $y \in\overline{\Ac} \subset P \subset \JA$, which contradicts to the assumption that $y\not\in\JA$.
This implies that $\ahom^{-1}(\Ac) = [a,0) \subset (-1,0)$, where $a=\ahom^{-1}\circ\mu(-\onehalf)\in(-1,0)$.
By the same arguments, $\bhom^{-1}(\Bc) = (0,b]\subset (0,1)$, where $b=\bhom^{-1}\circ\mu(\onehalf)\in(0,1)$.

\begin{sublemma}
$\JA \not=\JB$.
\end{sublemma}
\begin{proof}
If $\JA=\JB$, then we have a homeomorphism $\chom=\bhom^{-1}\circ\ahom:[-1,0)\to(0,1]$.
Hence there exists $c \in(a,0)$ such that $c'=\chom(c) \in (0,b)$.
Then $\ahom(c) = \Ac$ and $\bhom\circ\chom(c)\in\Bc$.
But $\bhom\circ\chom(c) = \ahom(c)$, and so $\Ac\cap\Bc\not=\varnothing$, which contradicts to the assumption.
\end{proof}

Now fix arbitrary orientation preserving homeomorphisms $\aahom:[-1,-\onehalf]\to[-1,a]$ and
$\bbhom:[\onehalf,1]\to[b,1]$ and define the map $\psi:[-1,1] \to \JA \cup \{y\} \cup \JB$ by the
formula
\[
\psi(t) =
\begin{cases}
\ahom^{-1}\circ \aahom(t), & t\in[-1, -\onehalf], \\
\mu(t), & t\in[-\onehalf, \onehalf],\\
\bhom^{-1}\circ \bbhom(t), & t\in[\onehalf,1].
\end{cases}
\]
One easily checks that $\psi$ is a required homeomorphism.
\end{proof}

\section{Partitions}
Let $\Xsp$ be a topological space, $\Partition$ be a partition of $\Xsp$, $\Ysp = \Xsp/\Partition$ be the quotient space, and $\prj:\Xsp\to\Ysp$ be the corresponding quotient map.
We will endow $\Ysp$ with the \emph{factor topology}, so a subset $\Vsp \subset \Ysp$ is open if and only if its inverse $\prj^{-1}(\Vsp)$ is open in $\Xsp$.

A \emph{saturation} $S(\Usp)$ of a subset $\Usp\subset X$ with respect to $\Partition$ is the union of all $\elem \in\Partition$ such that $\elem\cap \Usp\not=\varnothing$.
Equivalently, $S(\Usp) = \prj^{-1}(\prj(\Usp))$.
A subset $\Usp$ is \emph{saturated} if $\Usp = S(\Usp)$.
Evidently, if $A \cap S(B) = \varnothing$, then $S(A) \cap S(B)=\varnothing$ as well.

\begin{lemma}\label{lm:prop}
\begin{enumerate}
\item\label{enum:lm:prop:T1_space}
$\Ysp$ is a $T_1$-space if and only if each element $\elem\in\Partition$ is closed.

\item\label{enum:lm:prop:prj_open_equiv_conditions}
The following conditions are equivalent:
\begin{enumerate}
\item\label{enum:lm:prop:e:prj_open}
the map $\prj:\Xsp\to\Ysp$ is open;
\item\label{enum:lm:prop:e:saturation_is_open}
for each $x\in\Xsp$ there exists an open neighborhood $\Usp$ whose saturation $S(\Usp)$ is open;
\item\label{enum:lm:prop:e:cover_with_open_restr}
there exists an open cover $\beta=\{\Usp_i\}_{i\in\Lambda}$ of $\Xsp$ such that for each $i\in\Lambda$ the restriction $\prj|_{\Usp_i}: \Usp_i\to\prj(\Usp_i)$ is an open map.
\end{enumerate}

\item\label{enum:lm:prop:prj_open}
If $\prj$ is open then for each saturated subset $B$ we have that
\begin{gather}
\label{equ:X_setminus_ovrSA}
\Xsp\setminus\overline{B} = S(\Xsp\setminus\overline{B}), \\
\label{equ:image_of_closure_B}
p(\overline{B}) = \overline{p(B)}.
\end{gather}
In particular, $\overline{S(A)}$ and $\Xsp\setminus\overline{S(A)}$ are saturated for each subset $A \subset \Xsp$.

\item\label{enum:lm:prop:loc_finite_families}
Let $\beta = \{\Wsp_i\}_{i\in\Lambda}$ be a family of subsets of $\Ysp$, and $\alpha = \{\prj^{-1}(\Wsp_i)\}_{i\in\Lambda}$ be the corresponding family of their inverses in $\Xsp$.
If $\beta$ is locally finite, then so is $\alpha$.
Conversely, if $\alpha$ is locally finite and $\prj$ is open then $\beta$ is locally finite as well.

\item\label{enum:lm:prop:loc_finite_families_nbh}
Suppose $\Xsp$ is a normal topological space and $\alpha = \{\elem_i\}_{i\in\bN}$ is a locally finite family of mutually disjoint closed subsets of $\Xsp$.
Then for each $i\in\bN$ there exists a neighborhood $\Usp_i$ of $\elem_i$ such that $\overline{\Usp_i}\cap\overline{\Usp_j}=\varnothing$ for $i\not=j$.

\item\label{enum:lm:prop:homeomorphism}
Let $f:A \to B$ be a bijection between topological spaces.
Suppose that $\{\Ksp_i\}_{i\in\Lambda}$ is a locally finite cover of $A$ by closed sets.
If each of the restrictions $f|_{\Ksp_i}: \Ksp_i\to B$ is continuous, then $f$ is continuous it self.

Moreover, suppose the family $\{\psi(\Ksp_i)\}_{i\in\Lambda}$ is locally finite, $f(\Ksp_i)$ is closed in $B$, and the restriction $f|_{\Ksp_i}: \Ksp_i\to f(\Ksp_i)$ is a homeomorphism for each $i\in\Lambda$.
Then $f$ is a homeomorphism.
\end{enumerate}
\end{lemma}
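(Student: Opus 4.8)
The plan is to establish the six items in turn; each is a standard point-set statement about quotient maps, saturations, and locally finite families, so I will indicate only the mechanism in each case rather than grinding through the routine verifications. For (1), $\Ysp$ is $T_1$ iff every singleton $\{y\}\subset\Ysp$ is closed, and by the definition of the factor topology this holds iff $\prj^{-1}(y)$ is closed in $\Xsp$; but $\prj^{-1}(y)$ is precisely the element $\elem\in\Partition$ with $\prj(\elem)=y$. For (2), I would run the cycle (a)$\Rightarrow$(b)$\Rightarrow$(c)$\Rightarrow$(a): the implication (a)$\Rightarrow$(b) is immediate, since for any open $\Usp$ the set $S(\Usp)=\prj^{-1}(\prj(\Usp))$ is open whenever $\prj$ is; for the remaining two implications the essential tool is the identity $S\bigl(\bigcup_{\lambda}A_{\lambda}\bigr)=\bigcup_{\lambda}S(A_{\lambda})$, which localizes the openness of $\prj$ — it shows that if $\prj$ carries open subsets of each member of some open cover to open subsets of $\Ysp$, then $\prj$ is open, because every open $V$ is the union of its intersections with the cover members. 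Carrying out this localization carefully, so that the neighbourhoods produced by (b) and the cover in (c) are used at points where their saturations are well behaved, is the fussiest point of the whole lemma, and is the step I expect to be the main obstacle.

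For (3), assume $\prj$ is open and $B$ is saturated; the key is to show $\overline{B}=\prj^{-1}\bigl(\overline{\prj(B)}\bigr)$. The inclusion $\subseteq$ is continuity of $\prj$ (the right-hand side is closed and contains $B$), while for $\supseteq$ one uses openness: if $\prj(x)\in\overline{\prj(B)}$ then every open neighbourhood $\Usp$ of $x$ has $\prj(\Usp)$ open and meeting $\prj(B)$, hence $\Usp$ meets $\prj^{-1}\prj(B)=S(B)=B$, so $x\in\overline{B}$. Thus $\overline{B}$, and with it its complement, is saturated, which is the first displayed identity; applying the surjection $\prj$ to the equality just proved gives the second; the concluding remark is the special case $B=S(A)$. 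For (4), both directions are routine: if $\{\Wsp_i\}$ is locally finite, pull back a neighbourhood $\Vsp$ of $\prj(x)$ meeting only finitely many $\Wsp_i$, using $\prj^{-1}(\Vsp)\cap\prj^{-1}(\Wsp_i)=\prj^{-1}(\Vsp\cap\Wsp_i)$; conversely, if $\{\prj^{-1}(\Wsp_i)\}$ is locally finite and $\prj$ is open, push a neighbourhood $\Usp$ of a preimage of $y$ forward, noting that $\prj(\Usp)$ is then a neighbourhood of $y$ which meets $\Wsp_i$ iff $\Usp$ meets $\prj^{-1}(\Wsp_i)$.

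For (5), I would construct the $\Usp_i$ by induction on $i\in\bN$, keeping the invariant that $\overline{\Usp_1},\dots,\overline{\Usp_n}$ are pairwise disjoint and that each $\overline{\Usp_k}$ is disjoint from every $\elem_j$ with $j\neq k$. At step $n$ the set $C_n=\bigl(\bigcup_{i<n}\overline{\Usp_i}\bigr)\cup\bigl(\bigcup_{j>n}\elem_j\bigr)$ is closed — the first union is finite, the second is closed by local finiteness together with closedness of the $\elem_j$ — and is disjoint from $\elem_n$ by the invariant and the mutual disjointness of the $\elem_j$; normality then yields an open $\Usp_n\supseteq\elem_n$ with $\overline{\Usp_n}\cap C_n=\varnothing$, and since $\elem_i\subseteq\Usp_i\subseteq\overline{\Usp_i}$ for $i<n$ this automatically forces $\overline{\Usp_n}\cap\elem_i=\varnothing$ as well, so the invariant is preserved.

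For (6), the first assertion is the pasting lemma for locally finite closed covers: for closed $F\subseteq B$ one has $f^{-1}(F)=\bigcup_i(f|_{\Ksp_i})^{-1}(F)$, a locally finite union of sets each closed in $\Ksp_i$ and hence in $A$, so $f^{-1}(F)$ is closed and $f$ is continuous. The second assertion follows by applying the first to the bijection $f^{-1}:B\to A$ with the cover $\{f(\Ksp_i)\}$ of $B$, which by hypothesis is locally finite and consists of closed sets and for which $(f^{-1})|_{f(\Ksp_i)}=(f|_{\Ksp_i})^{-1}$ is continuous; hence $f^{-1}$ is continuous and $f$ is a homeomorphism. Apart from the localization in (2), the only other place calling for a moment's attention is the bookkeeping of the induction in (5).
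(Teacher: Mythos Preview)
Your proposal is correct and tracks the paper's proof closely. The paper in fact declares items~(1), (2), and (6) ``easy'' and leaves them to the reader, so your sketches there go beyond what the paper supplies; for (4) and (5) your arguments are line-for-line the same as the paper's (the inductive separation in (5) against $C_n=\bigl(\bigcup_{i<n}\overline{\Usp_i}\bigr)\cup\bigl(\bigcup_{j>n}\elem_j\bigr)$ is exactly what the paper does with its sets $A_i=\bigcup_{j\ge i}\elem_j$).

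The one place where you diverge slightly is (3): the paper first proves that $\overline{B}$ is saturated by a complement argument (sandwiching $\Xsp\setminus S(\Xsp\setminus\overline{B})$ between $B$ and $\overline{B}$ and using that it is closed), and then deduces $\prj(\overline{B})=\overline{\prj(B)}$ in a separate step. Your route---establishing $\overline{B}=\prj^{-1}\bigl(\overline{\prj(B)}\bigr)$ directly via the openness of $\prj$---gets both displayed identities at once and is a touch cleaner, but the content is the same. Your worry that the localization in (2) is ``the main obstacle'' is overstated relative to the paper's view; it treats (2) as entirely routine.
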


\begin{proof}
Statements (\ref{enum:lm:prop:T1_space}), (\ref{enum:lm:prop:prj_open_equiv_conditions}),
and~(\ref{enum:lm:prop:homeomorphism}) are easy and we leave them for the reader.

\smallskip

(\ref{enum:lm:prop:prj_open})
Suppose $\prj$ is an open map and let $B \subset \Xsp$ be a saturated subset.
Then $\Xsp\setminus B$ is also saturated, i.e.\! $S(\Xsp\setminus B)=\Xsp\setminus B$, and so
\[
\Xsp\setminus\overline{B} \ \subset \ S(\Xsp\setminus\overline{B}) \ \subset \ S(\Xsp\setminus B) = \Xsp\setminus B.
\]
Hence
\[
\overline{B} \ \supset \ \Xsp\setminus S(\Xsp\setminus\overline{B}) \ \supset \ B.
\]
As $\Xsp\setminus\overline{B}$ is open, $S(\Xsp\setminus\overline{B})$ is open as well, and therefore $\Xsp\setminus S(\Xsp\setminus\overline{B})$ is a closed subset containing $B$.
Therefore it must contain the closure $\overline{B}$, hence $\overline{B} = \Xsp\setminus S(\Xsp\setminus\overline{B})$, which implies~\eqref{equ:X_setminus_ovrSA}.

\smallskip

Let us prove~\eqref{equ:image_of_closure_B}.
Since $p$ is continuous, $p^{-1}(\overline{p(B)})$ is a closed subset containing $B$.
Therefore it contains $\overline{B}$, and so $p(\overline{B}) \subset \overline{p(B)}$.

Conversely, by~\eqref{equ:X_setminus_ovrSA}, $\overline{B}$ is saturated and closed.
Therefore, by definition of the quotient topology, $p(\overline{B})$ is a closed subset and it contains $p(B)$.
Hence it also contains $\overline{p(B)}$, i.e.\! $p(\overline{B}) \supset \overline{p(B)}$.

\smallskip

(\ref{enum:lm:prop:loc_finite_families})
Suppose $\beta$ is a locally finite family and $x\in \Xsp$.
We should find a neighborhood $\Usp$ of $x$ which intersects only finitely many elements from $\alpha$.
Let $y=\prj(x)$.
Since $\beta$ is locally finite, there exists a neighborhood $\Vsp$ of $y$ intersecting only finitely many elements $\Wsp_{i_1},\ldots,\Wsp_{i_k} \in \beta$.
Then $\prj^{-1}(\Vsp)$ is an open neighborhood of $x$ intersecting only the following elements $\prj^{-1}(\Wsp_{i_1}),\ldots,\prj^{-1}(\Wsp_{i_k})$ of $\alpha$.

Conversely, suppose $\alpha$ is locally finite and $\prj$ is open.
Let $y\in\Ysp$ and $x\in\Xsp$ be such that $\prj(x)=y$.
Then there exists a neighborhood $\Usp$ of $x$ intersecting only finitely many elements, say $\prj^{-1}(\Wsp_{i_1}),\ldots,\prj^{-1}(\Wsp_{i_k})$, of $\alpha$.
Therefore its saturation $S(\Usp) = \prj^{-1}(\prj(\Usp))$ also intersects only $\prj^{-1}(\Wsp_{i_1}),\ldots,\prj^{-1}(\Wsp_{i_k})$.

Since $\prj$ is open, the image $\prj(\Usp)$ is an open neighborhood of $y$.
We claim that $\prj(\Usp)$ intersects only the elements $\Wsp_{i_1},\ldots,\Wsp_{i_k}$ of $\alpha$.
Indeed, if $\prj(\Usp) \cap \Wsp_i \not=\varnothing$ for some $i\in \Lambda$, then $\prj^{-1}(\prj(\Usp)) \cap \prj^{-1}(\Wsp_i)\not=\varnothing$ which is possible only when $i \in \{i_1,\ldots,i_k\}$.

\smallskip

(\ref{enum:lm:prop:loc_finite_families_nbh})
For each $i\in\bN$ consider the following subfamily $\alpha_i = \{ \omega_j \}_{j\geq i}$ of $\alpha$, so $\alpha=\alpha_1$ and $\alpha_{i+1}  \subset \alpha_i$ for all $i\in\bN$.
Then each $\alpha_{i}$ is locally finite as well, and therefore the union $A_i = \mathop{\cup}\limits_{j=i}^{\infty} \elem_i$ is a closed subset of $\Xsp$.

Since $\Xsp$ is normal and $\elem_1$ and $A_2$ are mutually disjoint and closed, there exists an open neighborhood $\Usp_1$ of $\elem_1$ such that $\overline{\Usp_1} \cap A_2 = \varnothing$.
Then $\elem_2$ and $\overline{\Usp_1} \cup A_3$ are mutually disjoint and closed, whence there exists an open neighborhood $\Usp_2$ of $\elem_2$ which does not intersect $\overline{\Usp_1} \cup A_3$.
Repeating these arguments so on we will construct for each $i\in\bN$ an open neighborhood $\Usp_i$ of $\elem_i$ such that $\overline{\Usp_i}$ does not intersect $\bigl(\cup_{j=1}^{i-1} \overline{\Usp_j}\bigr)\cup A_{i+1}$.
Then $\overline{U_i} \cap \overline{U_j} = \varnothing$ for all $i\not=j\in\bN$.
\end{proof}

\begin{definition}\label{def:loc_triv_partition}
We will say that a partition $\Partition$ is \emph{locally trivial} if for each $\omega\in\Partition$ there exists an open neighborhood $\Usp$, a topological space $J$, a point $t_0\in J$, and a homeomorphism $\phi:\omega\times J \to \Usp$ such that $\phi(\omega \times t)$ is an element of $\Partition$ for all $t\in J$ and $\phi(x,t_0) = x$ for all $x\in\omega$.
\end{definition}
In particular, a foliation belonging to class $\classFol$ is a locally trivial partition.

Notice that in the notation of Definition~\ref{def:loc_triv_partition} $\Usp$ is saturated and open in $\Xsp$,
whence its image $\Vsp =\prj(\Usp)$ is open in $\Ysp$ and we have the following commutative diagram:

\begin{equation}\label{equ:reformulation_loc_triv_partition}
\xymatrix{
\omega\times J \ar[rr]^-{\phi}_-{\cong} \ar[d]_-{q_2} && \Usp = \prj^{-1}(\Vsp) \ar[d]^-{\prj} \\
J \ar[rr]^-{\xi}_-{1-1} && \Vsp
}
\end{equation}
where $q_2$ is a projection onto the second multiple and $\xi$ is the induced one-to-one continuous map but it is not necessarily a homeomorphism.

\begin{lemma}\label{lm:loc_triv_relations}
The following conditions are equivalent:
\begin{enumerate}
\item\label{lm:loc_triv_relations:enum:prj_is_ltfibr}
the quotient map $\prj:\Xsp\to\Ysp$ is a locally trivial fibration;
\item\label{lm:loc_triv_relations:enum:partit_lt__prj_open}
partition $\Partition$ is locally trivial and the quotient map $\prj:\Xsp\to\Ysp$ is open.
\end{enumerate}
\end{lemma}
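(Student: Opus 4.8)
The plan is to establish the equivalence of the two conditions in Lemma~\ref{lm:loc_triv_relations} by a direct unwinding of the definitions, using the commutative diagram~\eqref{equ:reformulation_loc_triv_partition} as the central bookkeeping device. The implication \eqref{lm:loc_triv_relations:enum:prj_is_ltfibr}$\Rightarrow$\eqref{lm:loc_triv_relations:enum:partit_lt__prj_open} is the easy direction: if $\prj$ is a locally trivial fibration, then every point $y\in\Ysp$ has a neighborhood $\Vsp$ with $\prj^{-1}(\Vsp)\cong F\times\Vsp$ over $\Vsp$, where $F$ is the fiber; since all fibers of $\prj$ are the elements of $\Partition$ and $\Ysp$ is $T_1$ (each $\elem$ closed, see Lemma~\ref{lm:prop}\eqref{enum:lm:prop:T1_space}), this trivialization is exactly the data required in Definition~\ref{def:loc_triv_partition} with $J=\Vsp$ and $t_0$ any chosen point. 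That $\prj$ is open follows because a locally trivial fibration is an open map: locally it is a projection $F\times\Vsp\to\Vsp$, which is open, so Lemma~\ref{lm:prop}\eqref{enum:lm:prop:prj_open_equiv_conditions}, equivalence \eqref{enum:lm:prop:e:cover_with_open_restr}$\Leftrightarrow$\eqref{enum:lm:prop:e:prj_open}, gives openness of $\prj$.

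For the converse \eqref{lm:loc_triv_relations:enum:partit_lt__prj_open}$\Rightarrow$\eqref{lm:loc_triv_relations:enum:prj_is_ltfibr}, the point is to upgrade the continuous bijection $\xi:J\to\Vsp$ in~\eqref{equ:reformulation_loc_triv_partition} to a homeomorphism, which is precisely where openness of $\prj$ enters. Given $\elem\in\Partition$ with local-triviality homeomorphism $\phi:\elem\times J\to\Usp$, $\Usp$ saturated and open, put $\Vsp=\prj(\Usp)$, which is open since $\prj$ is open. In the diagram, $\prj\circ\phi=\xi\circ q_2$. Now $q_2$ is an open map, $\phi$ is a homeomorphism, hence $\prj|_{\Usp}$ is open onto $\Vsp$; and for any open $W\subseteq J$ we have $\xi(W)=\prj(\phi(\elem\times W))$, which is open in $\Vsp$ because $\elem\times W$ is open in $\elem\times J$, $\phi(\elem\times W)$ is open (and saturated) in $\Usp$, hence open in $\Xsp$, and $\prj$ is open. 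Thus $\xi^{-1}$ is continuous, so $\xi:J\to\Vsp$ is a homeomorphism. Composing, $\phi$ together with $\xi$ gives a homeomorphism of pairs identifying $\prj|_{\Usp}:\Usp\to\Vsp$ with $q_2:\elem\times J\to J$ rewritten over $\Vsp$ as $\elem\times\Vsp\to\Vsp$, i.e. a local trivialization of $\prj$ over $\Vsp$ with fiber $\elem$. Since such $\Usp$ (equivalently such $\Vsp$) can be found around every leaf, hence around every point of $\Ysp$, the map $\prj$ is a locally trivial fibration.

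The main obstacle—and really the only non-formal step—is showing that the induced bijection $\xi$ is open; everything else is diagram-chasing. The subtlety is that $\xi$ is \emph{a priori} only a continuous bijection and, as the remark after~\eqref{equ:reformulation_loc_triv_partition} explicitly warns, need not be a homeomorphism in general. What makes it work here is exactly the extra hypothesis that $\prj$ is open: openness of $\prj$ transports the openness of the model projection $q_2$ (which trivially is an open map) across the homeomorphism $\phi$ to yield openness of $\xi$. Once $\xi$ is known to be a homeomorphism, the rectangle in~\eqref{equ:reformulation_loc_triv_partition} exhibits $\prj^{-1}(\Vsp)\to\Vsp$ as, up to homeomorphism, the trivial bundle $\elem\times\Vsp\to\Vsp$, which is the definition of local triviality of the fibration $\prj$. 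I would present the argument as two short implications, isolating the openness-of-$\xi$ claim as the key lemma inside the proof.
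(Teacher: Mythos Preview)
Your proof is correct and follows essentially the same route as the paper's: in both directions you use the commutative diagram~\eqref{equ:reformulation_loc_triv_partition}, deduce openness of $\prj$ from the open cover criterion in Lemma~\ref{lm:prop}\eqref{enum:lm:prop:prj_open_equiv_conditions} for \eqref{lm:loc_triv_relations:enum:prj_is_ltfibr}$\Rightarrow$\eqref{lm:loc_triv_relations:enum:partit_lt__prj_open}, and for the converse show that $\xi$ is open via $\xi(W)=\prj\circ\phi\circ q_2^{-1}(W)$ together with openness of $\prj$. The paper's argument is exactly this, only more tersely written.
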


\begin{proof}
\eqref{lm:loc_triv_relations:enum:prj_is_ltfibr}$\Rightarrow$\eqref{lm:loc_triv_relations:enum:partit_lt__prj_open}.
Suppose $\prj$ is a locally trivial fibration.
We claim that then $\Partition$ is locally trivial.
Indeed, let $\omega\in\Partition$ and $y=\prj(\omega)\in\Ysp$.
Since $\prj$ is locally trivial, there exists a neighborhood $\Vsp$ of $y$ and the following commutative
diagram:

\begin{equation}\label{equ:reformulation_loc_triv_fibration}
\xymatrix{
\omega\times \Vsp \ar[rr]^-{\phi}_-{\cong} \ar[d]_-{q_2} && \Usp_{\omega} = \prj^{-1}(\Vsp) \ar[d]^-{\prj} \\
 \Vsp \ar[rr]^-{\xi=\mathrm{id}_{\Vsp}} && \Vsp
}
\end{equation}
in which $\phi$ is a homeomorphism.
This diagram coincides with~\eqref{equ:reformulation_loc_triv_partition} for $J=\Vsp$, and therefore $\Partition$ is a locally trivial partition.

Let us prove that $\prj$ is an open map.
Notice that in Diagram~\eqref{equ:reformulation_loc_triv_fibration} $q_2$ is an open map as a coordinate projection.
Since $\phi$ is a homeomorphism, it follows that the restriction $\prj|_{\Usp_{\omega}}$ is an open map as well.
But then $\beta=\{\Usp_{\omega}\}_{\omega\in\Partition}$ is an open cover of $\Xsp$ such that each restriction $\prj|_{\Usp_{\omega}}$ is open.
Therefore by~(\ref{enum:lm:prop:prj_open_equiv_conditions}) of Lemma~\ref{lm:prop} $\prj$ is
open.

\eqref{lm:loc_triv_relations:enum:partit_lt__prj_open}$\Rightarrow$\eqref{lm:loc_triv_relations:enum:prj_is_ltfibr}.
Suppose $\prj$ is an open map and $\Partition$ is locally trivial.
We claim that then in~\eqref{equ:reformulation_loc_triv_partition} the map $\xi$ is open, and therefore it is a homeomorphism.
This will imply that $\prj$ is a locally trivial fibration.

Let $T \subset J$ be an open subset.
Then $\phi\circ q_2^{-1}(T)$ is open in $\Usp$.
Since $\prj$ is open, we get that $\xi(T) = \prj\circ\phi\circ q_2^{-1}(T)$ is open in $\Vsp$.
Thus $\xi$ is an open map.
\end{proof}

\begin{definition}\label{def:spec_element}
An element $\omega \in \Partition$ will be called \emph{special} if its image $y=\prj(\omega)\in\Ysp$ is a special point of $\Ysp$, i.e. $y\not=\bnd{y}:=\mathop{\cap}\limits_{\Vsp\in\beta_{y}} \overline{\Vsp}$, where $\beta_{y}$ is the family of all neighborhoods of $y$, see Definition~\ref{def:spec_point}.
Let also
\begin{align*}
\bnd{\omega} &= \underset{N(\omega)} {\bigcap} \overline{S\left( N(\omega) \right)}, &
\bnds{\omega} &= \underset{N_{S}(\omega)} {\bigcap} \overline{N_{S}(\omega)},
\end{align*}
where $N(\omega)$ runs over \emph{all open} neighborhoods of $\omega$ and $N_S(\omega)$ runs over \emph{all saturated open} neighborhoods of $\omega$.
\end{definition}

\begin{lemma}\label{lm:special_elements}
Let $\omega\in\Partition$ and $y = \prj(\omega)$.
Then
\begin{align*}
\bnd{\omega} & \ \subset \ \bnds{\omega} \ \subset \ \prj^{-1}(\bnd{y}).
\end{align*}
If $\prj$ is an open map, then
\begin{align*}
\bnd{\omega} &= \bnds{\omega} = \prj^{-1}(\bnd{y}), &
\prj(\bnds{\omega}) &= \bnd{y}.
\end{align*}
\end{lemma}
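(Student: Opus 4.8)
The plan is to unravel the definitions carefully and exploit the interplay between saturated and non-saturated neighborhoods, together with the openness of $\prj$ and equation~\eqref{equ:image_of_closure_B} from Lemma~\ref{lm:prop}.

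First I would establish the inclusion $\bnd{\omega} \subset \bnds{\omega}$. This is immediate from the definitions: every saturated open neighborhood of $\omega$ is in particular an open neighborhood of $\omega$, so the intersection $\bnds{\omega}$ is taken over a subfamily of the neighborhoods defining $\bnd{\omega}$; moreover for a saturated open $N_S(\omega)$ we have $S(N_S(\omega)) = N_S(\omega)$, so the two intersections agree term-by-term on that subfamily and $\bnd{\omega} = \bigcap_{N(\omega)}\overline{S(N(\omega))} \subset \bigcap_{N_S(\omega)}\overline{S(N_S(\omega))} = \bigcap_{N_S(\omega)}\overline{N_S(\omega)} = \bnds{\omega}$.

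Next I would prove $\bnds{\omega} \subset \prj^{-1}(\bnd{y})$. Take $x \in \bnds{\omega}$ with leaf $\omega' = \prj^{-1}(\prj(x))$; I must show $\prj(x) \in \bnd{y} = \bigcap_{\Vsp\in\beta_y}\overline{\Vsp}$. Given any open neighborhood $\Vsp$ of $y$, its preimage $\prj^{-1}(\Vsp)$ is a saturated open neighborhood of $\omega$, hence $x \in \overline{\prj^{-1}(\Vsp)}$; since $\prj$ is continuous, $\prj(x) \in \prj(\overline{\prj^{-1}(\Vsp)}) \subset \overline{\prj(\prj^{-1}(\Vsp))} = \overline{\Vsp}$. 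As $\Vsp$ was arbitrary, $\prj(x) \in \bnd{y}$. This argument uses only continuity of $\prj$, not openness, so it is valid in general. Combining the two inclusions gives the first displayed assertion.

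Now assume $\prj$ is open. To close the loop it suffices to prove the reverse chain $\prj^{-1}(\bnd{y}) \subset \bnd{\omega}$. Let $x \in \prj^{-1}(\bnd{y})$ and let $N(\omega)$ be any open neighborhood of $\omega$; I must show $x \in \overline{S(N(\omega))}$. Since $\prj$ is open, $\prj(N(\omega))$ is an open neighborhood of $y$, so $\prj(x) \in \bnd{y} \subset \overline{\prj(N(\omega))}$. By~\eqref{equ:image_of_closure_B} applied to the saturated set $S(N(\omega))$, we have $\prj(\overline{S(N(\omega))}) = \overline{\prj(S(N(\omega)))} = \overline{\prj(N(\omega))}$, so $\prj(x) \in \prj(\overline{S(N(\omega))})$; since $\overline{S(N(\omega))}$ is saturated (it is the closure of a saturated set, and by the "In particular" clause of Lemma~\ref{lm:prop}(\ref{enum:lm:prop:prj_open}) such closures are saturated when $\prj$ is open), this forces $x \in \overline{S(N(\omega))}$. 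As $N(\omega)$ was arbitrary, $x \in \bnd{\omega}$. This yields $\prj^{-1}(\bnd{y}) \subset \bnd{\omega} \subset \bnds{\omega} \subset \prj^{-1}(\bnd{y})$, so all three sets coincide. Finally, applying $\prj$ to the equality $\bnds{\omega} = \prj^{-1}(\bnd{y})$ and using surjectivity of $\prj$ gives $\prj(\bnds{\omega}) = \bnd{y}$. The only mildly delicate point is the repeated use of the identity $\prj(\overline{B}) = \overline{\prj(B)}$ for saturated $B$ and the fact that it requires openness of $\prj$ — that is exactly where the hypothesis enters, so I would be careful to invoke Lemma~\ref{lm:prop}(\ref{enum:lm:prop:prj_open}) explicitly at each such step.
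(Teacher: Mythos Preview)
Your proof is correct and follows essentially the same approach as the paper: both arguments rest on the inclusion of the family of saturated open neighborhoods inside the family of all saturations $\{S(N(\omega))\}$, the bijection between saturated neighborhoods of $\omega$ and neighborhoods of $y$, and the identity $\prj(\overline{B})=\overline{\prj(B)}$ from Lemma~\ref{lm:prop}(\ref{enum:lm:prop:prj_open}). The only cosmetic difference is that in the open case the paper first observes $\mathcal{A}=\mathcal{B}$ to get $\bnd{\omega}=\bnds{\omega}$ and then computes $\prj(\bnds{\omega})=\bnd{y}$ directly, whereas you close the chain of inclusions by proving $\prj^{-1}(\bnd{y})\subset\bnd{\omega}$; these are equivalent reorderings of the same reasoning.
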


\begin{proof}
First we establish relations between $\bnd{\omega}$ and $\bnds{\omega}$.
Notice that the family $\mathcal{A} = \{ S(N(\omega)) \}$ of saturations of all open neighborhoods of $\omega$ includes the family $\mathcal{B} = \{ N_S(\omega) \}$ of all saturated open neighborhoods of $\omega$.
Therefore the intersection $\bnd{\omega}$ of the larger family $\mathcal{A}$ is contained in the intersection $\bnds{\omega}$ of the smaller family $\mathcal{B}$, that is $\bnd{\omega} \ \subset \ \bnds{\omega}$.

If $\prj$ is an open map, so the saturation of an open set is open, then $\mathcal{A} = \mathcal{B}$, and therefore $\bnd{\omega}=\bnds{\omega}$.

\smallskip

Now we will describe relationships between $\bnds{\omega}$ and $\bnd{y}$.
By definition of the quotient topology on $\Ysp$ the map $\prj$ induces a bijection between the families $\mathcal{B}$ and $\beta_{y}$.
Moreover, if $N_S(\omega) \in \mathcal{B}$ is an open saturated neighborhood of $\omega$ and $\Vsp = \prj(N_S(\omega))$ is an open neighborhood of $y$, then, due to continuity of $\prj$, we have that $\prj(\overline{N_S(\omega)}) \subset \overline{\Vsp}$.
Hence $\prj(\bnds{\omega}) \subset \bnd{y}$, that is $\bnds{\omega} \ \subset \ \prj^{-1}(\bnd{y})$.

If $\prj$ is open, then, due to~\eqref{equ:image_of_closure_B}, $\prj(\overline{N_S(\omega)}) = \overline{\prj(N_S(\omega))} = \overline{\Vsp}$, whence
\begin{equation}\label{equ:p_bnd_omega}
\prj(\bnds{\omega}) =
\prj\Bigl(\,\bigcap_{N_{S}(\omega)\in\mathcal{B}}\overline{N_{S}(\omega)} \,\Bigr) =
\prj\Bigl(\,\bigcap_{V \in \beta_{y}} \prj^{-1}(\overline{V}) \,\Bigr) =
 \bigcap_{V \in \beta_{y}} \overline{V} = \bnd{y}.
\end{equation}
Finally, as $\bnds{\omega}$ is saturated as an intersection of saturated sets, it follows from~\eqref{equ:p_bnd_omega} that $\bnds{\omega} = \prj^{-1}(\bnds{y})$.
\end{proof}

\begin{lemma}\label{lm:local_prop}
Suppose that the following conditions hold true:
\begin{enumerate}
\item[\rm(a)]
$\prj:\Xsp\to\Ysp$ is a locally trivial fibration with fiber $\bR$;
\item[\rm(b)]
the set $\Sigma$ of special elements of $\Xsp$ is locally finite;
\item[\rm(c)]
$\Ysp$ is a $T_1$-space locally homeomorphic with open subsets of $[0,1)$.
\end{enumerate}
Then every connected component $\Qsp$ of $\Xsp\setminus\Sigma$ is open in $\Xsp$ and is foliated homeomorphic with one of the following five stripped surfaces: model strips $\bR\times(0,1)$, $\bR\times[0,1)$, $\bR\times[0,1]$, or standard cylinder $C$, or standard M\"obius band $M$.
Moreover, in the last three cases, $\Qsp$ is also closed in $\Xsp$.
\end{lemma}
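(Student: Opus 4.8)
The strategy is to descend from $\Xsp$ to the quotient $\Ysp = \Xsp/\Partition$, use Lemma~\ref{lm:connected_components_of_nonspec_pts} to classify connected components of $\Ysp\setminus\YspecPtSet$ there, and then lift the result back up through the locally trivial fibration $\prj$ with fiber $\bR$. First I would record that by hypothesis~(a) and Lemma~\ref{lm:loc_triv_relations}, $\prj$ is open and $\Partition$ is locally trivial; in particular $\Ysp$ is $T_1$ (Lemma~\ref{lm:prop}\eqref{enum:lm:prop:T1_space}, since each leaf is closed, being a fiber of a fibration over a $T_1$ base). By Lemma~\ref{lm:special_elements} the set of special elements of $\Xsp$ is exactly $\prj^{-1}(\YspecPtSet)$, so $\Sigma = \prj^{-1}(\YspecPtSet)$; since $\Sigma$ is locally finite and $\prj$ is open, Lemma~\ref{lm:prop}\eqref{enum:lm:prop:loc_finite_families} gives that $\YspecPtSet$ is locally finite in $\Ysp$. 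Thus all hypotheses of Lemma~\ref{lm:connected_components_of_nonspec_pts} are met.

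\textbf{From the base to the total space.} Let $\Qsp$ be a connected component of $\Xsp\setminus\Sigma = \Xsp\setminus\prj^{-1}(\YspecPtSet) = \prj^{-1}(\Ysp\setminus\YspecPtSet)$. Since $\prj$ is open and continuous with connected fibers $\bR$, it induces a bijection between connected components of $\Xsp\setminus\Sigma$ and connected components of $\Ysp\setminus\YspecPtSet$: I would check that $\prj(\Qsp)$ is a connected component $\Wsp$ of $\Ysp\setminus\YspecPtSet$ and that $\Qsp = \prj^{-1}(\Wsp)$. By Lemma~\ref{lm:connected_components_of_nonspec_pts}, $\Wsp$ is open in $\Ysp$, so $\Qsp$ is open in $\Xsp$; and $\Wsp$ is homeomorphic to one of $(0,1)$, $[0,1)$, $[0,1]$, or $S^1$. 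Restricting the locally trivial $\bR$-fibration $\prj$ over $\Wsp$, we get that $\Qsp\to\Wsp$ is a locally trivial $\bR$-bundle. Over $(0,1)$ and $[0,1)$ (contractible bases) such a bundle is trivial, so $\Qsp\cong\bR\times(0,1)$ resp.\ $\bR\times[0,1)$ as foliated spaces, the foliation being by the $\bR$-fibers. Over $[0,1]$ the bundle is again trivial, giving $\bR\times[0,1]$. Over $S^1$ there are exactly two isomorphism classes of $\bR$-bundles — the trivial one (cylinder) and the twisted one (open M\"obius band) — classified by $\pi_0(\mathrm{Homeo}(\bR)) = \bZ/2$; these yield precisely the standard cylinder $C$ and the standard M\"obius band $M$ with their canonical foliations. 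In all cases the bundle isomorphism is automatically foliated since the foliation on each model is the fiber partition.

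\textbf{Closedness in the compact cases.} When $\Wsp\cong[0,1]$ or $\Wsp\cong S^1$, Lemma~\ref{lm:connected_components_of_nonspec_pts} already tells us $\Wsp$ is a connected component of $\Ysp$, hence closed in $\Ysp$; therefore $\Qsp=\prj^{-1}(\Wsp)$ is closed in $\Xsp$, which covers the last three cases of the statement ($\bR\times[0,1]$, $C$, $M$). This completes the list.

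\textbf{Main obstacle.} The routine topological bits (openness, the component correspondence, $T_1$-ness) are straightforward given the lemmas already proved. The genuine content is the bundle-classification step: knowing that a locally trivial fibration with fiber $\bR$ over $[0,1)$ or over $S^1$ has only the expected isomorphism types, \emph{and} that the resulting homeomorphisms can be taken foliated in the strong sense required (mapping leaves to leaves, not merely fibers to fibers up to reparametrization). The cleanest route is to trivialize locally using hypothesis~(a), then patch the local trivializations over the one-dimensional base using that any self-homeomorphism of $\bR$ is isotopic to $\pm\mathrm{id}$, absorbing transition functions into $\{\pm\mathrm{id}\}$; over a contractible base this kills the cocycle entirely, and over $S^1$ it reduces it to the monodromy sign. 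Producing these patched trivializations carefully — especially at the boundary point of $[0,1)$ and handling the compact cases $[0,1]$ and $S^1$ where the gluing is genuinely global — is where the real work lies, and I expect the authors to spend most of the proof there; the model-strip language of Lemma~\ref{lm:leaf_shrinking} may be invoked to make the foliated identifications explicit.
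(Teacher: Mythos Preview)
Your proposal is correct and follows essentially the same route as the paper: pass to the quotient via Lemmas~\ref{lm:loc_triv_relations} and~\ref{lm:special_elements}, transfer local finiteness of $\Sigma$ downstairs to $\YspecPtSet$ using openness of $\prj$ and Lemma~\ref{lm:prop}, apply Lemma~\ref{lm:connected_components_of_nonspec_pts} to classify the components of $\Ysp\setminus\YspecPtSet$, and then lift back through the $\bR$-fibration. One remark: contrary to your expectation, the paper does not elaborate on the bundle-classification step at all --- it simply asserts triviality over the contractible bases and the cylinder/M\"obius dichotomy over $S^1$ as standard facts --- so the actual proof is about as short as your outline, and Lemma~\ref{lm:leaf_shrinking} plays no role here.
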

\begin{proof}
By (a) and Lemma~\ref{lm:loc_triv_relations} $\prj$ is an open map.
Therefore by Lemma~\ref{lm:special_elements} $\Sigma=\prj^{-1}(\YspecPtSet)$ and $\prj(\Sigma)=\YspecPtSet$, where $\YspecPtSet$ is the set of special points of $\Ysp$.
Then by (b) and~(\ref{enum:lm:prop:loc_finite_families}) of Lemma~\ref{lm:prop} $\YspecPtSet$ is also a locally finite family of points.
Due to (c) each point in $\Ysp$ is closed, whence $\YspecPtSet$ is closed in $\Ysp$.

Let $\Wsp$ be a connected component $\Ysp\setminus\YspecPtSet$.
Then $\Wsp$ is open in $\Ysp$ and open closed in $\Ysp\setminus\YspecPtSet$.
Therefore $\Qsp = \prj^{-1}(\Wsp)$ is open in $\Xsp$ and open closed in $\Xsp\setminus\Sigma$, i.e.\! $\Qsp$ is a connected component of $\Xsp\setminus\Sigma$.
Moreover, due to (a) the restriction $\prj:\Qsp \to\Wsp$ is a locally trivial fibration with fiber $\bR$, and by Lemma~\ref{lm:connected_components_of_nonspec_pts} $\Wsp$ is homeomorphic with one of the following spaces: $(0,1)$, $[0,1)$, $[0,1]$, $S^1$.
Therefore in the first three cases (when $\Wsp$ is contractible) $\Qsp$ is fiber-wise homeomorphic to a product $\bR\times\Wsp$, and in the last case, when $\Wsp\cong S^1$, $\Qsp$ is fiber-wise homeomorphic either with the standard cylinder $C$ or with the standard M\"obius band $M$.

It remains to show that every connected component $\Qsp$ of $\Xsp\setminus\Sigma$ can be represented as $\Qsp = \prj^{-1}(\Wsp)$ for some connected component $\Wsp$ of $\Ysp\setminus\YspecPtSet$.
Let $\Wsp = \prj(\Qsp)$.
We claim that $\Wsp$ is open closed in $\Ysp\setminus\YspecPtSet$.
Indeed, let $\Wsp'$ be the connected component of $\Ysp\setminus\YspecPtSet$ containing $\Wsp$.
Then as noted above $\prj^{-1}(\Wsp')$ is connected and contains $\Qsp$, whence $\Qsp = \prj^{-1}(\Wsp')$, and so $\Wsp = \Wsp'$.
\end{proof}

\section{Proof of~\eqref{th:open_strips:Q} of Theorem~\ref{th:open_strips}}\label{sect:proof:1:th:open_strips}
Let $\Xsp$ be a $2$-dimensional manifold and $\Partition$ be a $1$-dimensional foliation on $\Xsp$ belonging to class $\classFol$ and such that the set $\Sigma$ of special leaves of $\Xsp$ is locally finite.
Let also $\Ysp=\Xsp/\Partition$ be the space of leaves endowed with the corresponding factor topology and $\prj:\Xsp\to\Ysp$ be the factor map.

We claim that $\prj$ satisfies conditions (a)--(c) of Lemma~\ref{lm:local_prop}.
Indeed, by Lemma~\ref{lm:classFol_prop} $\prj$ is open, and by Lemma~\ref{lm:loc_triv_relations} it is a locally trivial fibration with fiber $\bR$, so condition (a) holds.
Condition (b) holds by assumption and condition (c) directly follows from definition of class $\classFol$.

Therefore by Lemma~\ref{lm:local_prop} every connected component $\Xsp\setminus\Sigma$ is foliated homeomorphic with one of the spaces: $\bR\times(0,1)$, $\bR\times[0,1)$, $\bR\times[0,1]$, $C$, $M$.

Applying the above result to the surface $\Xsp\setminus\partial\Xsp$ we get that every connected component of $\Xsp\setminus(\Sigma\cup\partial\Xsp)$ is foliated homeomorphic with one of the spaces: $\bR\times(0,1)$, $C$, or $M$.
Statement~\eqref{th:open_strips:Q} of Theorem~\ref{th:open_strips} is proved.

\section{Trapezoids}\label{sect:trapezoids}
The results of this section will be used for the proof of (2) of Theorem~\ref{th:open_strips}.

Let $c<d$ and $\alpha,\beta:(c,d] \to \bR$ be two continuous functions such that $\alpha(y) < \beta(y)$ for all $y\in(c,d]$.
Then the subset
\[\trap = \{ (x,y) \in \bR^2 \mid \alpha(y) \leq x \leq \beta(y), \ c< y\leq d \}\]
will be called a \emph{half open trapezoid} or simply a \emph{trapezoid}.
In this case $[\alpha(d), \beta(d)] \times d$ is the \emph{upper base} of $\trap$, $d$ is the \emph{level} of the upper base, $d-c$ is the \emph{altitude} of $\trap$, and the set
\[
\roof(\trap) := \{ (\alpha(y), y) \}_{y\in(c,d]}  \ \cup \ [\alpha(d), \beta(d)] \times d \ \cup \ \{ (\beta(y), y) \}_{y\in(c,d]}
\]
is the \emph{roof} of $\trap$, see Figure~\ref{fig:trapezoid}a).

\begin{figure}[ht]
\begin{tabular}{ccc}
\includegraphics[height=2.5cm]{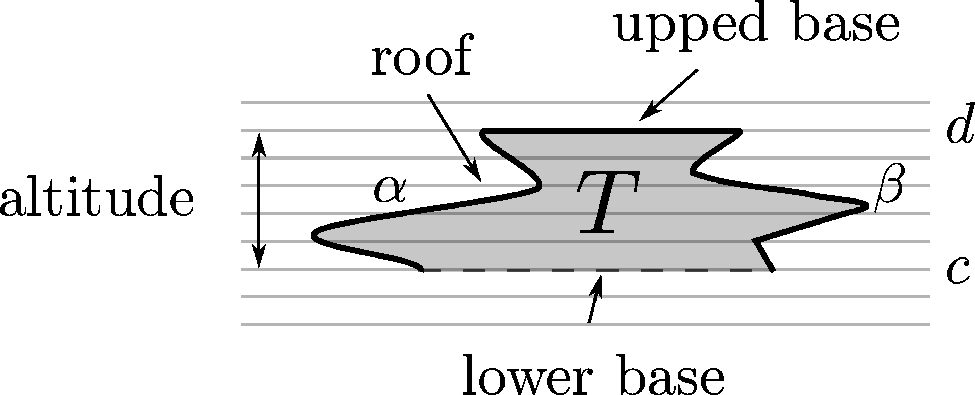} & \qquad \qquad &
\includegraphics[height=2.5cm]{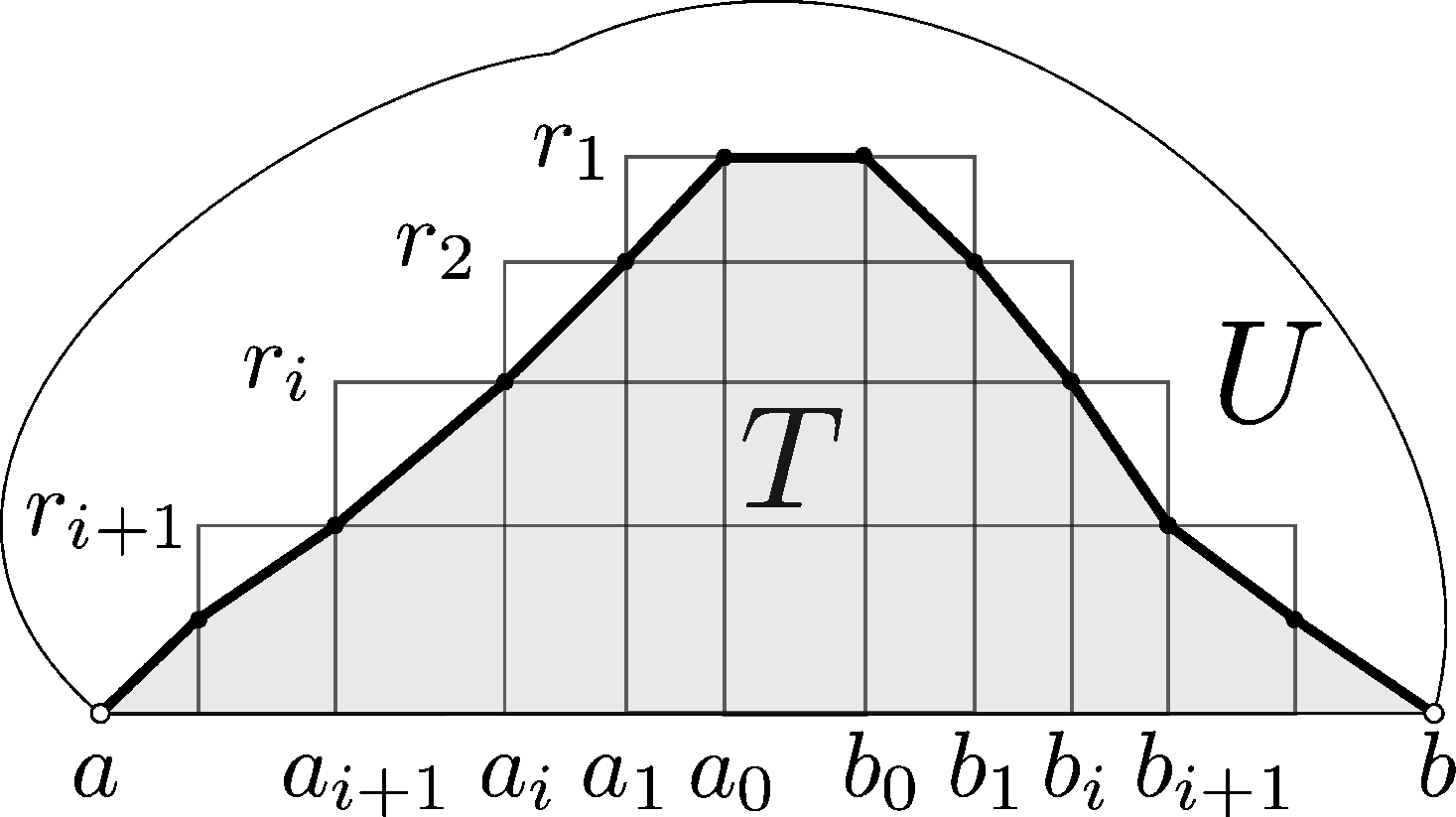} \\
a) Half open trapezoid & & b) Construction of trapezoid
\end{tabular}
\caption{Half open trapezoid}\label{fig:trapezoid}
\end{figure}

Notice that if $\phi:\bR\times(c,d] \to \bR\times(c,d]$ is a homeomorphism preserving second coordinate, i.e. $\phi(\bR\times y) =\bR\times y$ for all $y\in(c,d]$, then $\phi(\trap)$ is a trapezoid as well.

In general, $\alpha$ and $\beta$ can be non-bounded or have no limits when $y\to c+0$.
Suppose, in addition, that there exist finite or infinite limits
\begin{align*}
\lim\limits_{y\to c+0} \alpha(y) &= a,  &\lim\limits_{y\to c+0} \beta(y) = b
\end{align*}
such that $a<b$.
Then $(a,b)\times c$ will be called the \emph{(lower) base} of $\trap$.
If $a$ and $b$ are finite numbers, then $\trap$ will be called a \emph{trapezoid with bounded base}, and the set
\[\overline{\trap} = \trap \cup [a,b]\times c\]
will be a \emph{closed} trapezoid.
In particular, if $\alpha$ and $\beta$ are constant functions, then the trapezoid $\trap$ will be called a \emph{rectangle}.

\begin{lemma}\label{lemma:graph_neighborhood}
Let $J=(a,b)\times 0 \subset \bR^2$ be an open interval, $N = J \ \cup \ \bR^2 \times(0,+\infty)$, and $U$ be an open neighborhood of $J$ in $N$.
Then there exists a half open trapezoid $\trap \subset U$ with base $J$, see Figure~\ref{fig:trapezoid}b).
\end{lemma}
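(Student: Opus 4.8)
The plan is to build the roof of the desired trapezoid $\trap$ as the graph of two functions $\afunc,\bfunc:(a,b]\to\bR$ — wait, the orientation here is reversed from the definition above: here the base $J$ sits at level $y=0$ and the neighborhood $N$ opens upward, whereas in the definition of a half open trapezoid the base is at the bottom level $c$ and the roof is above. So I should think of $\trap$ as $\{(x,y)\mid \afunc(x)\le y,\ a<x<b\}$ capped appropriately, or simply re-coordinatize. Concretely, for each $x\in(a,b)$ the vertical ray $\{x\}\times[0,+\infty)$ meets $U$ in an open set containing $(x,0)$, so there is a maximal $h(x)\in(0,+\infty]$ with $\{x\}\times[0,h(x))\subset U$. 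The naive candidate trapezoid $\{(x,y)\mid a<x<b,\ 0\le y< h(x)\}$ is contained in $U$ but $h$ need not be continuous (only lower semicontinuous), and it need not stay bounded away from $0$ near the endpoints $a,b$; moreover we need the roof to consist of two graphs $x=\afunc(y)$, $x=\bfunc(y)$ over an interval of $y$-values, not a graph over $x$. So the real work is to replace $h$ by a continuous positive function $g\le h$ on $(a,b)$ that tends to $0$ at the endpoints, and then to read off $\afunc,\bfunc$.

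First I would fix a point, say the midpoint $m=(a+b)/2$, note $h(m)>0$, and choose $d\in(0,h(m))$ with $d$ finite; this will be the altitude. Next, using local finiteness-free elementary analysis: for each $x_0\in(a,b)$ pick $\eps(x_0)>0$ and $\delta(x_0)>0$ so that the small rectangle $(x_0-\delta(x_0),x_0+\delta(x_0))\times[0,\eps(x_0))\subset U$, which is possible since $U$ is open and contains $(x_0,0)$. By a standard partition-of-unity / shrinking argument on the locally compact space $(a,b)$ one obtains a continuous function $g:(a,b)\to(0,d]$ with $\{x\}\times[0,g(x))\subset U$ for all $x$ and $\lim_{x\to a+0}g(x)=\lim_{x\to b-0}g(x)=0$; one convenient explicit choice is $g(x)=\min\{\,d,\ \operatorname{dist}((x,0),\partial U),\ \phi(x)\,\}$ where $\phi$ is a fixed continuous "tent" function on $(a,b)$ vanishing at both ends, and $\operatorname{dist}$ is taken in $\bR^2$ with $\partial U$ the boundary of $U$ in $N$ (if $U=N$ this distance term is dropped and $g=\min\{d,\phi\}$). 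Lower semicontinuity of $x\mapsto\operatorname{dist}((x,0),\partial U)$ together with continuity of $\phi$ and the constant $d$ makes $g$ continuous.

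Now the region $\trap:=\{(x,y)\mid a<x<b,\ 0\le y<g(x)\}$ is contained in $U$, contains $J$, but I must present it in the trapezoid normal form of the section, i.e. as bounded between two graphs over the $y$-variable on a $y$-interval of the form $(c,d]$ with $c<d$. Since this exact "upward-opening" region is the mirror image (under $y\mapsto d-y$, followed by restricting to $y\le d$) of a half open trapezoid with base $J$ at level $c:=0$... I would instead directly define the two boundary functions: for $y\in(0,d]$ let $\afunc(y)=\inf\{x\in(a,b): g(x)>y\}$ and $\bfunc(y)=\sup\{x\in(a,b): g(x)>y\}$; continuity of $g$ and $g\to0$ at the ends show these are well defined, continuous on $(0,d]$ (indeed on $(0,d)$; at $y=d$ use the midpoint value $g(m)=d$ if we arranged $\max g=d$, which we can by scaling $\phi$), satisfy $\afunc(y)<\bfunc(y)$, and $\lim_{y\to 0+}\afunc(y)=a$, $\lim_{y\to 0+}\bfunc(y)=b$. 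Then $\{(x,y):\afunc(y)\le x\le\bfunc(y),\ 0<y\le d\}$ is precisely a half open trapezoid in the sense of the definition (after the harmless reflection $y\leftrightarrow d-y$ to match "base at the bottom"), its base is $(a,b)\times0=J$, and it is contained in $\trap\subset U$.

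The main obstacle is the passage from the merely lower semicontinuous "vertical room function" $h$ to a continuous positive minorant $g$ that additionally decays to $0$ at both endpoints of $(a,b)$; once that continuous $g$ is in hand, extracting the two graph functions $\afunc,\bfunc$ and checking they form a trapezoid with the right base is routine point-set topology. The decay at the endpoints is what guarantees the base is exactly the prescribed open interval $J$ rather than something smaller, and it is forced by the fact that near $\partial J$ the points $(x,0)$ with $x\notin(a,b)$ lie outside $N$ (indeed $J$ is relatively open in $N$ but its endpoints are not in $N$ at all), so no uniform vertical room can persist up to the endpoints.
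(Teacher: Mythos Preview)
Your approach has a genuine gap at the last step. You define $\afunc(y)=\inf\{x:g(x)>y\}$ and $\bfunc(y)=\sup\{x:g(x)>y\}$ and then assert that the resulting half open trapezoid $\{(x,y):\afunc(y)\le x\le\bfunc(y),\ 0<y\le d\}$ is contained in the subgraph region $\{(x,y):0\le y<g(x)\}\subset U$. This fails whenever the superlevel sets $\{x:g(x)>y\}$ are disconnected: the trapezoid slice $[\afunc(y),\bfunc(y)]$ is then the convex hull of that superlevel set and may contain points $x$ with $g(x)<y$, i.e.\ points $(x,y)$ lying \emph{above} the graph of $g$, hence possibly outside $U$. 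Your function $g(x)=\min\{d,\operatorname{dist}((x,0),\partial U),\phi(x)\}$ has no reason to be unimodal, since the distance term can dip and rise again; so this situation is not excluded. The same non-unimodality breaks your claimed continuity of $\afunc$ and $\bfunc$: if $g$ has a plateau at some height $y_0$ followed by a further increase, then $\afunc$ jumps at $y_0$.

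The fix is to force $g$ to be unimodal from the start. The paper does exactly this, but by a different and more elementary route: it exhausts $(a,b)$ by nested compact intervals $[a_i,b_i]$, uses compactness to get rectangles $[a_i,b_i]\times[0,r_i]\subset U$ with $r_i$ strictly decreasing to $0$, and then interpolates piecewise linearly to obtain a height function that is strictly monotone on each side of $[a_0,b_0]$ by construction. The monotone pieces are then inverted to give $\afunc,\bfunc$ directly. You could salvage your argument by replacing your $g$ with, say, $\widetilde g(x)=\min\{g(t):t\in[x,m]\}$ for $x\le m$ and symmetrically for $x\ge m$ (running minima toward the midpoint), which is monotone on each side and still a positive continuous minorant; but as written the proof is incomplete. (Incidentally, no reflection is needed: with $c=0$ the base already sits at the open end $y=c$ as in the definition.)
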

\begin{proof}
Fix any two sequences $\{a_i\}_{i=0}^{\infty}, \{b_i\}_{i=0}^{\infty} \subset (a,b)$ such that $\lim\limits_{i\to\infty} a_i= a$, $\lim\limits_{i\to\infty} b_i = b$, and
\[
\cdots < a_{i+1} < a_i < \cdots < a_0 < b_0 < \cdots < b_i < b_{i+1} < \cdots
\]
Let also $J_i = [a_i,b_i]\times 0$.
Since $U$ is an open neighborhood of $J_i$ and $J_i$ is compact, there exist $r_i>0$ such that $J_i \times [0,r_i] \subset U$.
One can assume that $\lim\limits_{i\to\infty} r_i = 0$ and $\{r_i\}$ is strictly decreasing.
Now let $\alpha, \beta:(0,d]\to(0,+\infty)$ be a unique continuous function such that for each $i\geq0$
\begin{itemize}
\item[(i)]
$\alpha(a_i) = \beta(b_i) = r_{i+1}$;
\item[(ii)]
the restrictions $\alpha|_{[a_{i+1},a_i]}$ and $\beta|_{[b_{i},b_{i+1}]}$ are linear.
\end{itemize}
Then one easily checks that the function $\alpha$ and $\beta$ are strictly monotone and their inverses $\alpha^{-1}$ and $\beta^{-1}$ determine a half open trapezoid $T\subset\Usp$ with base $J$.
\end{proof}

\begin{proposition}\label{prop:trap_make_rectangles}
Let $\trap_i \subset \bR\times(c,d]$, $i\in\bN$, be a half open trapezoid with upper base at level $d_i \in(c,d]$ such that $\trap_i\cap\trap_j = \varnothing$ for $i\not=j$ and $\lim\limits_{i \to \infty} d_i = c$.
Then there exists a homeomorphism $\eta:\bR\times(c,d]\to\bR\times(c,d]$ such that
\begin{itemize}
\item[(i)]
$\eta(\bR\times y) = \bR\times y$ for all $y\in(c,d]$;
\item[(ii)]
$\eta(\trap_i)$ is a half open rectangle.
\end{itemize}
\end{proposition}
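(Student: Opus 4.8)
The plan is to straighten the trapezoids one level at a time, working from the ``top'' level $d$ down toward $c$, and then patch the resulting family of level-preserving homeomorphisms into a single global one. Since the $\trap_i$ are pairwise disjoint with $\lim_{i\to\infty} d_i = c$, any compact sub-strip $\bR\times[y_0,d]$ with $y_0>c$ meets only finitely many of them, so on each such sub-strip the problem is essentially finite and we have room to work.

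\smallskip

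First I would reorder the index set so that $d \geq d_1 \geq d_2 \geq \cdots$ (ties allowed); this is possible because $\{d_i\}\to c$, so only finitely many $d_i$ exceed any given level. Next, fix a strictly decreasing sequence $c < \cdots < t_2 < t_1 < t_0 = d$ with $t_k \to c$ chosen so that $d_i \in (t_k, t_{k-1}]$ for at most the indices $i$ in some finite block $B_k$ (one can arrange $t_k$ to separate the distinct values among the $d_i$). Then I would construct, for each $k\geq 1$, a homeomorphism $\eta_k$ of $\bR\times(t_k, d]$ fixing each horizontal line $\bR\times y$, equal to the identity on $\bR\times(t_{k-1},d]$ up to the previously-built correction, and carrying the portions of the trapezoids $\trap_i$, $i\in B_k$, lying in the slab $\bR\times(t_k,t_{k-1}]$ onto vertical rectangles, while leaving already-rectified pieces vertical. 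The key local move is elementary: given finitely many disjoint level-preserving ``curvilinear quadrilaterals'' in a slab $\bR\times(t_k,t_{k-1}]$, a fiberwise homeomorphism straightening their side-curves to vertical lines exists --- this is the same one-dimensional rearrangement as in Lemma~\ref{lemma:graph_neighborhood} and Lemma~\ref{lm:leaf_shrinking}, applied fiberwise with continuous dependence on the level $y$, the only subtlety being continuity as $y\to t_k+0$ where the bounding curves of a trapezoid $\trap_i$ with upper base exactly at $t_{k-1}$ may disappear; disjointness of the $\trap_i$ guarantees the straightened images stay disjoint.

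\smallskip

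Having built the $\eta_k$, I would define $\eta$ on $\bR\times(c,d]$ by declaring $\eta|_{\bR\times(t_k,d]}$ to be the composite of (the relevant restrictions of) $\eta_1,\dots,\eta_k$; because each $\eta_j$ is the identity above level $t_{j-1}$, these composites are mutually compatible on overlaps, so $\eta$ is well-defined, and it is level-preserving by construction. Continuity of $\eta$ at a point $(x_0,y_0)$ with $y_0>c$ is automatic since near such a point $\eta$ agrees with a finite composite of homeomorphisms; there is no point with $y_0 = c$ to worry about because the domain is $\bR\times(c,d]$. That $\eta$ is a homeomorphism then follows from Lemma~\ref{lm:prop}(\ref{enum:lm:prop:homeomorphism}) applied to the locally finite closed cover $\{\bR\times[t_k,d]\}_{k\geq 1}$ of $\bR\times(c,d]$, on each of which $\eta$ and $\eta^{-1}$ are continuous. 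Finally, property~(ii) holds because for each $i$, the trapezoid $\trap_i$ lies (except for its base, which it does not have here) within finitely many consecutive slabs, and by induction on those slabs each $\eta_k$ keeps the part of $\eta(\trap_i)$ already built vertical while verticalizing the next piece; the terminal image $\eta(\trap_i)$ is thus bounded by two vertical half-lines and the horizontal upper base, i.e.\ a half open rectangle.

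\smallskip

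The main obstacle is the bookkeeping for continuity of the fiberwise straightening across the separating levels $t_k$ --- ensuring that the side-curves of the $\trap_i$ and their straightened images vary continuously (and remain disjoint) as $y$ crosses a level where some trapezoid's upper base sits, and that the inductively-accumulated corrections $\eta_1\circ\cdots\circ\eta_{k}$ do not destroy verticality of earlier pieces. This is handled by insisting that each $\eta_k$ be supported in the slab $\bR\times(t_k, t_{k-1}]$ together with being the identity on $\bR\times(t_{k-1},d]$, so that it only acts where new trapezoid material appears, analogously to the ``fixed outside a strip'' property in Lemma~\ref{lm:leaf_shrinking}.
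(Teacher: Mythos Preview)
Your strategy has a genuine gap rooted in a misreading of the geometry of a half open trapezoid. By definition, each $\trap_i$ is the set $\{(x,y)\mid \alpha_i(y)\le x\le\beta_i(y),\ c<y\le d_i\}$, so it extends through \emph{every} horizontal slab below level $d_i$, hence through infinitely many of your slabs $\bR\times(t_k,t_{k-1}]$. Your claim that ``the trapezoid $\trap_i$ lies \ldots\ within finitely many consecutive slabs'' is therefore false, and with it the induction collapses: if each $\eta_k$ is supported only in $\bR\times(t_k,t_{k-1}]$, then in that slab none of the side-curves of $\trap_i$ with $i\in B_1\cup\cdots\cup B_{k-1}$ have yet been touched (so there are no ``already-rectified pieces'' to preserve), and you would have to straighten not just the new curves in $B_k$ but all earlier ones too, matching at $y=t_{k-1}$ with whatever vertical abscissae were produced by $\eta_{k-1}$ in the slab above. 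You do not address this matching, and the slab-supported scheme as stated simply leaves the lower portions of each $\trap_i$ uncorrected.

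The paper's proof avoids this by working the other way: it forgets the trapezoids and straightens the individual side-curves $\alpha_i,\beta_i$. At stage $k$ it builds a level-preserving homeomorphism $\phi_k$ of $\bR\times(c,d]$ that is the identity on $\bR\times[s_k,d]$ but acts on \emph{all} of $\bR\times(c,s_k]$, sending every side-curve with $d_i\ge s_k$ to a vertical segment on the whole of $(c,s_k]$. The infinite composition $\cdots\circ\phi_2\circ\phi_1$ is then well defined because on any fixed level $y>c$ only finitely many $\phi_k$ are nontrivial. The key tool making each $\phi_k$ exist is a parametrised family of homeomorphisms of $\bR$ (Lemma~\ref{lm:funcction_uk}) carrying $k$ moving points to $k$ fixed targets; this gives the finite-curve straightening (Lemma~\ref{lm:rectification_finite}) that replaces your slab-local move.
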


\begin{proof}
We need the following three lemmas.
It will be convenient to say that for a function $f:[a,b]\to\bR$ its \emph{graph} is the subset $\{(f(y), y) \mid y\in [a,b]\} \ \subset \ \bR^2$, so we just swap coordinates with respect to the usual definition.
In particular, for $q\in\bR$ a vertical segment $q\times[a,b]$ can be regarded as a graph of a constant function $[a,b]\mapsto q$.

\begin{sublemma}\label{lm:funcction_uk}{\rm
{(c.f.~\cite[Lemma~6.1.1]{Maksymenko:BSM:2006}})}.
Let $\Delta_k = \{ (y_1,\ldots,y_k) \in \bR^k \mid y_1<y_2<\ldots<y_k \}$ and $q_1 < q_2 <\ldots < q_k \in \bR$.
Then there exists a $C^{\infty}$ function $u_k:\bR\times\Delta_k\to\bR$ having the following properties:
\begin{itemize}
\item[\rm(a)]
the correspondence $x \mapsto u_k(x; y_1,\ldots,y_k)$ is an orientation preserving homeomorphism $\bR\to\bR$ for all $(y_1,\ldots,y_k)\in\Delta_k$;
\item[\rm(b)]
$u_k(x; q_1,\ldots,q_k) = x$ for all $x\in\bR$;
\item[\rm(c)]
$u_k(y_i; y_1,\ldots,y_k) = q_i$ for $i=1,\ldots,k$.
\end{itemize}
\end{sublemma}
\begin{proof}
The construction of $u_k$ is similar to \cite[Lemma~6.1.1]{Maksymenko:BSM:2006}.
For instance, one can set
\begin{align*}
u_1(x; y_1) &= x - y_1 + q_1, &
u_2(x; y_1,y_2) &= q_1 + \frac{q_2-q_1}{y_2-y_1} (x - y_1).
\end{align*}
We leave the details for the reader.
\end{proof}

\begin{sublemma}\label{lm:rectification_finite}
Let $\func_i: (c, s] \to \bR$, $i = 1, \ldots, k$, be a finite family of continuous functions such that $\func_i(y) \not= \func_j(y)$ whenever $i\not=j$ and $y \in (c, s]$.
Then there exists a homeomorphism $\phi: \bR\times(c,s]\to \bR\times(c,s]$ such that
\begin{itemize}
\item[\rm(1)] $\phi(\bR\times y) = \bR\times y$ for all $y \in (c,s]$;
\item[\rm(2)] $\phi$ is fixed on $\bR\times s$;
\item[\rm(3)] $\phi$ maps the graph $\{(\func_i(y), y)  \mid y\in(c,s])\}$ of $\func_i$, $i \in \{1, \ldots, k\}$, onto a vertical segment $\func_i(s) \times [a, s]$.
\end{itemize}
\end{sublemma}
\begin{proof}
One can assume that $\func_i < \func_j$ for $i<j$.
Let $u_k:\bR\times\Delta_k\to\bR$ be a function from Lemma~\ref{lm:funcction_uk} constructed for the numbers $q_i = \func_i(s)$, $i \in \{1, \ldots, k\}$.
Then a homeomorphism $\phi$ satisfying (1)-(3) can be defined by the following formula:
\[
\phi(x,y) = \bigl( u_k(x; \func_1(y), \ldots,\func_k(y)), \ y \bigr).
\]
Indeed, (1) is evident.
Moreover, due to property (b) of $u_k$ we have that
\[
\phi(x,s)= \bigl( u_k(x; \func_1(s), \ldots,\func_k(s)), \ s \bigr) = (x,s)
\]
which proves (2).
Finally, by property (c) of $u_k$
\[
\phi(\func_i(y),y)=\bigl( u_k(\func_i(y); \func_1(y), \ldots,\func_k(y)), \ y \bigr) = (\func_i(s), y),
\]
so (3) is also satisfied.
\end{proof}

\begin{sublemma}\label{lemma:rectification_inifinite}
Let $\{\dd_i\}_{i\in\bN} \subset (c,d]$ be a sequence with $\lim\limits_{i \to \infty} d_i = c$, and for each $i\in\bN$ let $\func_i:(c, \dd_i]\to\bR$ be a continuous function such that the graphs of $\func_i$ and $\func_j$ are mutually disjoint for $i\not= j$.
Then there exists a homeomorphism $\eta:\bR\times(c,d]\to\bR\times(c,d]$ such that
\begin{itemize}
\item[(i)]
$\eta(\bR\times y) = \bR\times y$ for all $y\in(c,d]$;
\item[(ii)]
$\eta$ maps the graph $\{(\func_i(y), y) \mid y\in(c,\dd_i] \}$ of $\func_i$ onto a vertical segment $q_i \times(c,\dd_i]$ for some $q_i\in\bR$, $i\in\bN$.
\end{itemize}
\end{sublemma}
\begin{proof}
One can assume, in addition, that $\{\dd_i\}_{i\in\bN}$ is non-increasing.
Let us remove repeating elements from $\{\dd_i\}_{i\in\bN}$ and denote the obtained sequence by $\{s_i\}_{i\in\bN}$.
Thus there is an increasing sequence of indices $1 = j_1 < j_2 < \cdots < j_n < \cdots$ such that
\[s_i = d_{j_i} = d_{j_{i}+1} = \cdots = d_{j_{i+1} - 1} \ > \ s_{i+1} = d_{j_{i+1}} = \cdots . \]
Then by Lemma~\ref{lm:rectification_finite} there exists a homeomorphism $\phi_1:\bR\times(c,s_1]\to\bR\times(c,s_1]$ preserving second coordinate and sending the graphs of functions $\func_{j_1},\ldots,\func_{j_2-1}$ onto vertical segments.
Let us extend $\phi_1$ by the identity on $\bR\times[s_1,d]$ to a homeomorphism of all of $\bR\times(c,d]$.

Denote by $\func_i^1$ the image of the graph of $\func_i$ under $\phi_1$.
Then again there exists a homeomorphism $\phi_2:\bR\times(c,d]\to\bR\times(c,d]$ preserving second coordinate, fixed on $\bR\times[s_2,d]$, and sending the graphs of functions $\func^1_{j_1},\ldots,\func^1_{j_3-1}$ onto vertical segments.

Hence the composition $\phi_2\circ\phi_1$ preserves second coordinate and sends the graphs of functions $\func_{j_1},\ldots,\func_{j_3-1}$ onto vertical segments.
Denote by $\func^2_{i}$ the graph of the function $\func_i$ under $\phi_2\circ\phi_1$.

Then by similar arguments, we will construct an infinite family of homeomorphisms $\phi_1,\ldots,\phi_k,\ldots$ of $\bR\times(c,d]$ such that each $\phi_k$ preserves second coordinate, is fixed on $\bR\times[s_k,d]$, and sends the graphs of functions $\func^1_{j_1},\ldots,\func^1_{j_{k+1}-1}$ onto vertical segments.

Since $\lim\limits_{i\to\infty} s_i = c$, it follows that the infinite composition
\[
\eta = \cdots \circ \phi_m \circ\phi_{m-1} \circ \cdots \circ \phi_1:\bR\times(c,d]\to\bR\times(c,d]
\]
is a well defined homeomorphism satisfying the statement of lemma.
\end{proof}

To deduce Proposition~\ref{prop:trap_make_rectangles} assume that $\trap_i$ is defined by functions $\alpha_i,\beta_i:(c,d_i]\to\bR$.
Denote $\func_{2i-1} = \alpha_i$ and $\func_{2i} = \beta_i$.
Then existence of $\eta$ is guaranteed by Lemma~\ref{lemma:rectification_inifinite}.
\end{proof}

\subsection*{Level-preserving homeomorphisms between trapezoids.}
\sloppy
Let $q_2:\bR^2\to\bR$,    $q_2(x,y)=y$, be the standard projection onto the second coordinate and
\begin{align*}
\strap &= \{ (x,y) \in \bR \mid \alpha(y) \leq x \leq \beta(y), \ a < y \leq b \}, \\
\trap &= \{ (x,y) \in \bR \mid \gamma(y) \leq x \leq \delta(y), \ c < y \leq d \}
\end{align*}
be two trapezoids with finite bases, where $\alpha,\beta:(a,b]\to\bR$ and $\gamma,\delta:(c,d]\to\bR$ are continuous functions such that $\alpha<\beta$ and $\gamma<\delta$.

\fussy
Let $A \subset \strap$ and $B\subset\trap$ be two subsets.
Then a map $\xi: A \to B$ will be called \emph{level-preserving} whenever
\[
q_2\circ \xi(x,y) = q_2\circ \xi(x',y)
\]
for all $x,x',y$ such that $(x,y), (x',y)\in A$.

\begin{lemma}\label{lm:level_pres_homeo_of_roofs}
Every level-preserving homeomorphism $\xi:\roof(\strap) \to \roof(\trap)$ between roofs of trapezoids extends to a level-preserving homeomorphism $\xi:\strap \to \trap$.
Moreover, if $\strap$ and $\trap$ have finite bases, then $\xi$ also extends to a level-preserving homeomorphism $\xi:\overline{\strap} \to \overline{\trap}$ between their closures.
\end{lemma}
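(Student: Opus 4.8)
The plan is to build $\xi:\strap\to\trap$ by prescribing, for each level $y$, what it does to the horizontal segment $\strap\cap(\bR\times y)$, using the already-known behaviour on the roof to pin down endpoints and using an affine (or any fixed monotone) reparametrisation in between. Concretely, a point of $\roof(\strap)$ lying over level $y\in(a,b)$ is either $(\alpha(y),y)$ on the left edge or $(\beta(y),y)$ on the right edge; since $\xi$ is a level-preserving homeomorphism of roofs, it sends the left edge $\{(\alpha(y),y)\}_{y\in(a,b]}$ and the right edge $\{(\beta(y),y)\}_{y\in(a,b]}$ of $\strap$ to the two edges of $\roof(\trap)$, and the level function induces a homeomorphism $h:(a,b]\to(c,d]$ with $h(b)=d$. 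First I would check that $\xi$ cannot swap the two edges in an orientation-incompatible way: because the roof of a trapezoid is connected and $\xi$ is a homeomorphism, the image of the left edge is one full edge of $\roof(\trap)$ and the image of the right edge is the other, and $h$ is a strictly monotone (hence increasing, since $h(b)=d$ is the top) homeomorphism. Write $\gamma^{\xi}(y)$ and $\delta^{\xi}(y)$ for the $x$-coordinates of the images of $(\alpha(y),y)$ and $(\beta(y),y)$; these are continuous functions of $y$, and $\{\gamma^{\xi}\circ h^{-1},\delta^{\xi}\circ h^{-1}\}=\{\gamma,\delta\}$ as the two roof-edges of $\trap$, with $\gamma^{\xi}\circ h^{-1}<\delta^{\xi}\circ h^{-1}$ after possibly renaming.

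**The formula.** Then I would define, for $(x,y)\in\strap$,
\[
\xi(x,y)=\Bigl(\gamma^{\xi}(y)+\frac{x-\alpha(y)}{\beta(y)-\alpha(y)}\bigl(\delta^{\xi}(y)-\gamma^{\xi}(y)\bigr),\ h(y)\Bigr).
\]
This is visibly level-preserving (the second coordinate depends only on $y$), agrees with the given $\xi$ on the roof by construction (at $x=\alpha(y)$ and $x=\beta(y)$, and on the top base $y=b$ the affine map between $[\alpha(b),\beta(b)]$ and $[\gamma(d),\delta(d)]$ is exactly what $\xi$ already does there, since a level-preserving homeomorphism of roofs restricted to the top base is a homeomorphism of those segments — wait, it need not be affine, so instead I should use the given $\xi$ on the top base directly and interpolate the affine correction to vanish near $y=b$; cleaner: replace the linear interpolation in $x$ by the one-parameter family of homeomorphisms $[\alpha(y),\beta(y)]\to[\gamma^{\xi}(y),\delta^{\xi}(y)]$ that equals $\xi|_{\text{top base}}$ at $y=b$ and is affine for $y$ bounded away from $b$, glued continuously — this is the standard "straight-line homotopy of homeomorphisms of an interval" and is routine). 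Continuity of $\xi$ is immediate from continuity of $\alpha,\beta,\gamma^{\xi},\delta^{\xi},h$ and of the interpolation; bijectivity holds level by level because on each level it is a composition of a homeomorphism of intervals with the bijection $h$ of levels; and the inverse has the same form, so $\xi^{-1}$ is continuous by the same argument. Hence $\xi:\strap\to\trap$ is a level-preserving homeomorphism extending the original.

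**The bases.** For the last claim, suppose $\strap$ and $\trap$ have finite bases $(a_0,b_0)\times a$ and $(c_0,d_0)\times c$. The functions $\alpha,\beta$ have finite limits $a_0<b_0$ as $y\to a+0$, and similarly $\gamma,\delta$. The key point to verify is that $\gamma^{\xi}(y)\to c_0$ (or $\to d_0$) and $\delta^{\xi}(y)$ to the other endpoint as $y\to a+0$: this holds because $\xi$ restricted to the left edge of $\roof(\strap)$ is a homeomorphism onto one edge of $\roof(\trap)$, and the edge of $\trap$ is a curve whose $x$-coordinate converges to an endpoint of $\trap$'s base as its level tends to $c$ — so $\lim_{y\to a+0}\gamma^{\xi}(y)$ exists and equals $c_0$ or $d_0$, and likewise for $\delta^{\xi}$, and they must be different limits since $\gamma^{\xi}<\delta^{\xi}$. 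Also $h(y)\to c$ as $y\to a+0$ since $h$ is an increasing homeomorphism $(a,b]\to(c,d]$. Therefore the formula above has a continuous extension to $\overline{\strap}=\strap\cup[a_0,b_0]\times a$ sending it to $\overline{\trap}$, linearly (or via the limiting interpolation) on the base segment; the same reasoning applied to $\xi^{-1}$ shows this extension is a homeomorphism.

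**Main obstacle.** I expect the only real subtlety to be bookkeeping about orientations and the behaviour on the top base: one must be careful that "level-preserving homeomorphism of roofs" already forces the two side-edges to go to the two side-edges (not, say, the left edge wrapping onto part of the top base), which uses that a roof is an arc with a distinguished "corner" structure only through the level function $q_2$ — here the argument is that $q_2$ restricted to $\roof(\strap)$ is monotone on each of the three pieces and $\xi$ intertwines the two $q_2$'s, so it must match up the pieces where $q_2$ is non-constant (the side edges) with the corresponding pieces of $\roof(\trap)$. Making the interpolation on the top base match the given $\xi$ there, rather than an ad hoc affine map, is the one place where a small gluing lemma is needed, but it is entirely standard.
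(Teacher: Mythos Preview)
Your approach is exactly the paper's: extract the induced homeomorphism of level-intervals (you call it $h$, the paper calls it $\sigma$) and then interpolate affinely on each horizontal slice. The paper simply writes down
\[
\xi(x,y)=\Bigl(\gamma(\sigma(y))+\tfrac{\delta(\sigma(y))-\gamma(\sigma(y))}{\beta(y)-\alpha(y)}\,(x-\alpha(y)),\ \sigma(y)\Bigr)
\]
and stops, without discussing either of the two points you flag. You are right to notice them: this displayed map sends the left side-edge of $\roof(S)$ to the left side-edge of $\roof(T)$ and is affine on the upper base, so for a \emph{general} level-preserving roof homeomorphism it is not literally an extension. Your remedies---relabelling $\gamma,\delta$ as $\gamma^{\xi},\delta^{\xi}$ according to the orientation $\xi$ actually induces, and interpolating between $\xi|_{\text{upper base}}$ and the affine map as $y$ decreases from $b$---are the standard repairs and work without difficulty. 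In short, your proof is the paper's argument carried out more carefully; the ``main obstacle'' you anticipate is real but, as you say, routine.
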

\begin{proof}
As $\xi$ is level-preserving, we have a well defined homeomorphism $\sigma:(a,b]\to(c,d]$ given by $\sigma(y) = q_2\circ \xi(\alpha(y),y)$.
Then $\xi$ extends to a homeomorphism $\strap\to\trap$ by
\[
\xi(x,y) = \left( \gamma(\sigma(y)) + \frac{\delta(\sigma(y))-\gamma(\sigma(y))}{\beta(y)-\alpha(y)} (x-\alpha(y)), \ \sigma(y) \right).
\]
Moreover, if in addition $\strap$ and $\trap$ have finite bases, so $\alpha$ and $\beta$ are defined and continuous on $[a,b]$ and $\gamma$ and $\delta$ are defined and continuous on $[c,d]$, then the same formulas define homeomorphisms $\sigma:[a,b]\to[c,d]$ and $\xi:\overline{\strap}\to\overline{\trap}$.
\end{proof}

\section{Proof of~\eqref{th:open_strips:closure_of_Q} of Theorem~\ref{th:open_strips}}\label{sect:proof:2:th:open_strips}
Let $\Partition$ be a partition on $\Xsp$ of class $\classFol$ such that the family $\Sigma$ of all special leaves is locally finite.
Let also $\bar{\Sigma} = \Sigma \cup \partial\Xsp$ be the union of all special and boundary leaves of $\Xsp$, $\Qsp$ be a connected component of $\Xsp\setminus\bar{\Sigma}$ homeomorphic with an open model strip, and $\phi:\bR\times(-1,1)\to\Qsp$ be a foliated homeomorphism.
Denote, see Figure~\ref{fig:q_properties}:
\begin{align}
\label{equ:notations_for_q}
\Qmin &= \phi\bigl(\bR\times(-1,0]\bigr), &
\Ksp &= \phi\bigl(\bR\times 0\bigr), &
\Qmax &= \phi\bigl(\bR\times[0,1)\bigr).
\end{align}
We should prove that the closures $\overline{\Qmin}$ and $\overline{\Qmax}$ are foliated homeomorphic to some model strips.
It suffices to prove this only for $\overline{\Qmin}$.

\begin{figure}[ht]
\includegraphics[height=1.4cm]{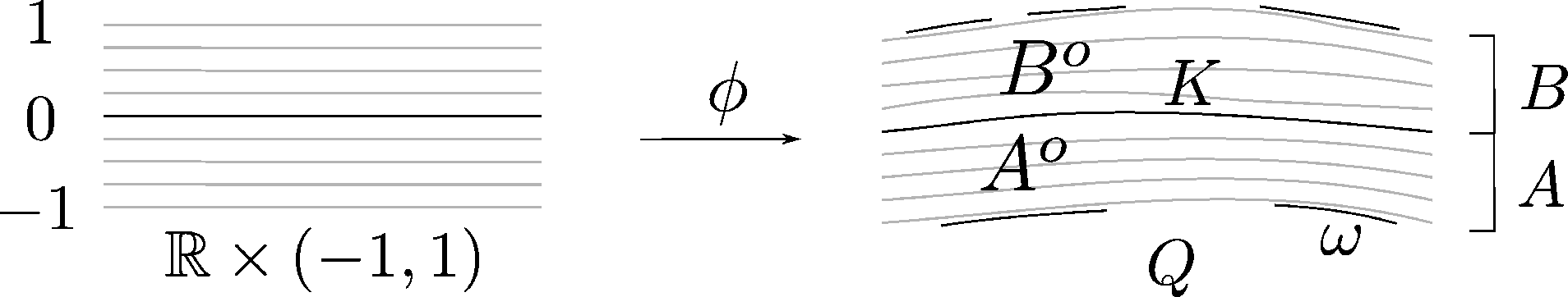}
\caption{}\label{fig:q_properties}
\end{figure}

\begin{lemma}\label{lm:Qmin_omega}
{\rm 1)}~$\overline{\Qmin} \setminus \Qsp = \overline{\Qmin} \setminus \Qmin$ \ and \ $\overline{\Qmax} \setminus \Qsp = \overline{\Qmax} \setminus \Qmax$.

{\rm 2)}~ $\overline{\Qsp} \setminus \Qsp= (\overline{\Qmin} \setminus \Qmin) \cup (\overline{\Qmax} \setminus \Qmax) \ \subset \ \bar{\Sigma}$.

{\rm 3)}~Let $\omega$ be a leaf in $\overline{\Qmin} \setminus \Qmin$, $J = (-1,1)\times 0 \subset \bR^2$, and $N = J \ \cup \ \bR \times (0, 1]$.
\begin{enumerate}
\item[\rm(a)]
Then $\Qsp \cup\omega$ is open in $\overline{\Qsp}$ and $\Qmin\cup\omega$ is open in $\overline{\Qmin}$.
\item[\rm(b)]
There exists a foliated homeomorphism $\psi:N \to \Qmin \cup \omega$ such that $\psi(J) = \omega$.
\item[\rm(c)]
Let $\Usp \subset \Xsp$ be an open neighborhood of $\omega$ and $T\subset \psi^{-1}(\Usp)$ be a subset with compact closure such that
\begin{align*}
&\overline{\Usp} \ \cap \ (\overline{\Qmin}\setminus\Qmin) \ = \ \omega, &
J \ \subset \ (\overline{T}\setminus T) \ \subset \ \bR\times 0.
\end{align*}
Then $\psi(T)$ is closed in $\Xsp$.
In particular, if $T$ is a trapezoid with base $J$, then $\psi(T \cup J)$ is closed in $\Xsp$.
\end{enumerate}
\end{lemma}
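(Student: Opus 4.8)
The plan is to dispatch 1) and 2) formally, translate 3) into a statement about the leaf space $\Ysp=\Xsp/\Partition$ via the open quotient map $\prj$, and then recognize $\Qmin\cup\omega$ as a trivial $\bR$-bundle over $[0,1]$. For \textbf{1) and 2)}: since $\Sigma$ is locally finite and made of closed leaves and $\partial\Xsp$ is closed, $\bar{\Sigma}=\Sigma\cup\partial\Xsp$ is closed, so the connected component $\Qsp$ of the open set $\Xsp\setminus\bar{\Sigma}$ is open in $\Xsp$, and $\Qmin,\Qmax$ — being images under the homeomorphism $\phi$ of $\bR\times(-1,0]$ and $\bR\times[0,1)$, which are closed in $\bR\times(-1,1)$ — are closed in $\Qsp$; hence $\overline{\Qmin}\cap\Qsp=\Qmin$ and $\overline{\Qmax}\cap\Qsp=\Qmax$, giving 1). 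Then $\overline{\Qsp}=\overline{\Qmin}\cup\overline{\Qmax}$ yields $\overline{\Qsp}\setminus\Qsp=(\overline{\Qmin}\setminus\Qmin)\cup(\overline{\Qmax}\setminus\Qmax)$ by 1); and a point $x\in\overline{\Qsp}\setminus\Qsp$ outside $\bar{\Sigma}$ would lie in another component of $\Xsp\setminus\bar{\Sigma}$, an open set disjoint from $\Qsp$, contradicting $x\in\overline{\Qsp}$. By part~(\ref{enum:lm:prop:prj_open}) of Lemma~\ref{lm:prop} all these sets are saturated, so $\overline{\Qmin}\setminus\Qmin$ is a union of leaves and it makes sense to fix a leaf $\omega$ in it; put $y=\prj(\omega)$.

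\textbf{3)(a) and the leaf-space picture.} By Lemmas~\ref{lm:classFol_prop} and~\ref{lm:loc_triv_relations}, $\prj$ is an open locally trivial $\bR$-fibration, so Lemma~\ref{lm:local_prop} applies: $\Sigma=\prj^{-1}(\YspecPtSet)$, and $\Wsp:=\prj(\Qsp)$ is a component of $\Ysp\setminus(\YspecPtSet\cup\partial\Ysp)$ homeomorphic with $\bR$ (as $\Qsp$ is an open model strip); moreover $\prj\circ\phi$ factors through the projection to $(-1,1)$, giving a homeomorphism $(-1,1)\to\Wsp$ carrying $\prj(\Qmin)$, $\prj(\Qmax)$ to the two closed half-lines of $\Wsp$ meeting at $\prj(\Ksp)$. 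Since $\prj$ is open, $\prj(\overline{\Qmin})=\overline{\prj(\Qmin)}$ by~\eqref{equ:image_of_closure_B}, so $y\in\overline{\prj(\Qmin)}\setminus\prj(\Qmin)$; this frontier set meets $\Wsp$ only in the ``outer'' half-end $\Jint\in\bSet$ of $\prj(\Qmin)$, hence $y\in\overline{\Jint}\setminus\Jint$ and $y\in\YspecPtSet\cup\partial\Ysp$, confirming $\omega\subset\bar{\Sigma}$. To prove 3)(a) I would show $\{y\}\cup\prj(\Qmin)$ is open in $\overline{\prj(\Qmin)}$ and $\{y\}\cup\Wsp$ is open in $\overline{\Wsp}$: here $\prj(\Qmin)$ is open in $\overline{\prj(\Qmin)}$ and $\Wsp$ in $\overline{\Wsp}$ for elementary reasons (including at the endpoint $\prj(\Ksp)$), while Lemma~\ref{lm:nbh_spec_points} applied to $y$ supplies an open neighbourhood of $y$ in $\Ysp$ whose trace on $\overline{\prj(\Qmin)}$ (resp.\ $\overline{\Wsp}$) is exactly $\{y\}\cup\JOint$ — one uses that the half-ends of $\bSet$ other than $\Jint$ whose closure contains $y$ lie in strips $\ne\Wsp$, or, if the other end of $\Wsp$ itself, inside $\prj(\Qmax)$, hence miss $\overline{\prj(\Qmin)}$. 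Pulling back along $\prj|_{\overline{\Qmin}}\colon\overline{\Qmin}\to\overline{\prj(\Qmin)}$ and $\prj|_{\overline{\Qsp}}\colon\overline{\Qsp}\to\overline{\Wsp}$ (quotient maps, as $\prj$ is open and the closures are saturated) and using $\prj^{-1}(y)=\omega$ then shows $\Qmin\cup\omega$ is open in $\overline{\Qmin}$ and $\Qsp\cup\omega$ in $\overline{\Qsp}$.

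\textbf{3)(b).} Let $Z:=\Qmin\cup\omega$ with the induced foliation. By 3)(a) $Z$ is open in $\overline{\Qmin}$ and is saturated, so $\prj|_Z\colon Z\to\prj(Z)=\prj(\Qmin)\cup\{y\}$ is a quotient map, and by 3)(a) its base is the $1$-manifold obtained from $\prj(\Qmin)\cong(-1,0]$ by attaching $y$ at the open end, so $\prj(Z)\cong[0,1]$. I would prove $\prj|_Z$ is a locally trivial $\bR$-fibration: over $\prj(\Qmin)$ it is trivialized by $\phi$, and over a neighbourhood of $y$ by the class $\classFol$ product chart $\varphi\colon\bR\times J'\to\Usp$ around $\omega$ ($J'=[0,1)$ if $\omega\subset\partial\Xsp$, else $(-1,1)$; $\varphi(\bR\times 0)=\omega$), shrunk — using 3)(a) and local finiteness — so that $\overline{\Usp}\cap(\overline{\Qmin}\setminus\Qmin)=\omega$, $\Usp\cap\Sigma=\omega$, $\Usp\cap\partial\Xsp\subset\omega$. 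Since $\varphi$ is foliated and $\prj$ open, $s\mapsto\prj(\varphi(\bR\times s))$ is a homeomorphism of $J'$ onto $\prj(\Usp)$; comparing with Lemma~\ref{lm:nbh_spec_points} one sees $D:=\{s\ne 0:\varphi(\bR\times s)\subset\Qmin\}$ accumulates at $0$, and a connectedness argument inside $\Wsp\cong\bR$ shows $D$ contains a full one-sided punctured neighbourhood of $0$. Matching the $\phi$- and $\varphi$-parametrizations of the common leaves gives a continuous monotone bijection $t\mapsto s(t)$ with $s(t)\to 0$ as $t\to-1$ (any other subsequential limit would force $y\in\Wsp$); its injectivity then forbids $\Qmin$ from accumulating on $\omega$ from both sides, so $D\supset(-\sigma,0)$ and $D\cap(0,\sigma)=\varnothing$ for some $\sigma>0$, and $\varphi$ restricts to a trivialization of $\prj|_Z$ over $\{y\}\cup\prj(\varphi(\bR\times(-\sigma,0)))$. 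Hence $\prj|_Z$ is a locally trivial $\bR$-bundle over the contractible $[0,1]$, so it is trivial and $Z$ is foliated homeomorphic with $\bR\times[0,1]$, $\omega$ being a boundary line. Composing with the foliated homeomorphism $\bR\times[0,1]\to N$ furnished by Lemma~\ref{lm:leaf_shrinking} (the restriction to heights $[0,1]$ of the homeomorphism shrinking $\bR\times 0$ onto $(-1,1)\times 0=J$) gives the desired $\psi\colon N\to\Qmin\cup\omega$ with $\psi(J)=\omega$. The hard part is precisely this one-sided collar claim — that $\Qmin$ meets a neighbourhood of $\omega$ in a genuine one-sided collar and that the leaves of $\Qmin$ running to the end of the strip are exactly the collar leaves.

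\textbf{3)(c).} Since $J\subset(\overline{T}\setminus T)\subset\bR\times 0$ and $N\cap(\bR\times 0)=J$, the closure of $T$ in $N$ is $\overline{T}\cap N=T\cup J$, a closed subset of $N$; and $\psi(J)=\omega\subset\Usp$, so $T\cup J\subset\psi^{-1}(\Usp)$ and $\psi(T\cup J)\subset\Usp\cap(\Qmin\cup\omega)$. By 3)(b), $\psi$ is a homeomorphism $N\to\Qmin\cup\omega$, so $\psi(T\cup J)$ is closed in $\Qmin\cup\omega$. Let $x\in\overline{\psi(T\cup J)}$ (closure in $\Xsp$); since $\psi(T\cup J)\subset\overline{\Qmin}$ and $\subset\Usp$, we get $x\in\overline{\Qmin}\cap\overline{\Usp}$. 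If $x\in\Qmin\cup\omega$, then $x\in\overline{\psi(T\cup J)}\cap(\Qmin\cup\omega)=\psi(T\cup J)$; otherwise $x\in\overline{\Qmin}\setminus\Qmin$, whence $x\in\overline{\Usp}\cap(\overline{\Qmin}\setminus\Qmin)=\omega$, contradicting $x\notin\Qmin\cup\omega$. Therefore $\psi(T\cup J)=\psi(T)\cup\omega$ is closed in $\Xsp$; for a trapezoid $T$ with base $J$ this is the ``in particular'' assertion, and the preceding statement is the same argument with $T$ replaced by $\overline{T}\cap N$.
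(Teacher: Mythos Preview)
Your proof is correct. Parts 1), 2), and 3)(c) follow essentially the paper's line (and in 3)(c) you rightly prove that $\psi(T\cup J)=\psi(T)\cup\omega$ is closed; the literal assertion ``$\psi(T)$ is closed'' and the paper's own concluding inclusion $\psi(J)\subset\psi(T)$ are slips, since $J\subset\overline{T}\setminus T$ forces $\omega\subset\overline{\psi(T)}\setminus\psi(T)$ --- the intended and subsequently used conclusion is precisely the one you establish).

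For 3)(a) and 3)(b), however, the paper's route is considerably shorter than yours. For 3)(a) the paper simply observes that $\bar{\Sigma}\setminus\{\omega\}$ is still a locally finite family of closed leaves, hence its union is closed, so $\Wsp:=\Xsp\setminus(\bar{\Sigma}\setminus\{\omega\})$ is open in $\Xsp$; then by 2) one has $\Qsp\cup\omega=\overline{\Qsp}\cap\Wsp$ and $\Qmin\cup\omega=\overline{\Qmin}\cap\Wsp$, which are therefore relatively open. No passage to the leaf space is needed. For 3)(b) the paper uses that $\prj:\Xsp\to\Ysp$ is already known (Lemmas~\ref{lm:classFol_prop} and~\ref{lm:loc_triv_relations}) to be a locally trivial $\bR$-fibration; restricting a locally trivial fibration to the full preimage of any subset of the base is again locally trivial, so $\prj|_{\Qmin\cup\omega}$ is a locally trivial $\bR$-fibration over $\prj(\Qmin\cup\omega)$, which Lemma~\ref{lm:nbh_spec_points} identifies with $[0,1]$, giving $\Qmin\cup\omega\cong\bR\times[0,1]\cong N$ at once. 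Your ``one-sided collar'' analysis around $\omega$ is re-deriving this local triviality by hand from the class-$\classFol$ chart --- not wrong, but unnecessary given the fibration structure already in place. What you gain is a self-contained argument; what the paper gains is brevity by cashing in the earlier setup.
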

\begin{proof}
1)~Denote $\iQmin = \Qmin \setminus \Ksp$ and $\iQmax = \Qmax \setminus \Ksp$.
Then $K\subset \overline{\iQmin}$, so
\[\overline{\Qmin} = \overline{\iQmin \cup \Ksp} = \overline{\iQmin} \cup \Ksp =\overline{\iQmin}.\]
Moreover, as $\iQmin$ and $\iQmax$ are open in $\Xsp$ and disjoint, we get that $\overline{\Qmin} \cap \iQmax = \overline{\iQmin} \cap \iQmax = \varnothing$, whence $\overline{\Qmin} \setminus \Qsp = \overline{\Qmin} \setminus (\Qmin \cup \iQmax)  = \overline{\Qmin} \setminus \Qmin$.
The proof for $\Qmax$ is similar.

\smallskip

2)
It follows from (1) that $\overline{\Qsp} \setminus \Qsp = (\overline{\Qmin} \setminus \Qsp) \cup (\overline{\Qmax} \setminus \Qsp) = (\overline{\Qmin} \setminus \Qmin) \cup (\overline{\Qmax} \setminus \Qmax)$.

Let us prove that $\overline{\Qsp} \setminus \Qsp\subset\bar{\Sigma}$.
Suppose $\overline{\Qsp}\setminus\Qsp \not\subset \bar{\Sigma}$.
Then there exists a connected component $P$ of $\Xsp\setminus\bar{\Sigma}$ distinct from $\Qsp$ and such that $\overline{\Qsp}\cap P \not=\varnothing$.
But $P$ is open in $\Xsp$, whence $P\cap\Qsp\not=\varnothing$ and so $P=\Qsp$ which contradicts to the assumption.

\smallskip

3a)
Notice that the family $\bar{\Sigma}\setminus\{\omega\}$ is locally finite as well as $\bar{\Sigma}$.
Therefore the set
\[
\Wsp := \Xsp\setminus(\bar{\Sigma}\setminus\omega) = (\Xsp\setminus\bar{\Sigma})\cup\omega
\]
is open in $\Xsp$.
Due to 2), $\Qsp = \overline{\Qsp} \cap (\Xsp\setminus\bar{\Sigma})$, whence
$\Qsp \cup \omega =
\overline{\Qsp} \cap \bigl( (\Xsp\setminus\bar{\Sigma})\cup\omega \bigr) =
\overline{\Qsp} \cap \Wsp$
is open in $\overline{\Qsp}$.

Similarly, due to 1), $\Qmin=\overline{\Qmin} \cap \Qsp$, whence
$\Qmin\cup\omega =
 \overline{\Qmin} \cap  \overline{\Qsp} \cap \bigl( (\Xsp\setminus\bar{\Sigma})\cup\omega \bigr) =
\overline{\Qmin} \cap \Wsp$ is open in $\overline{\Qmin}$.

\smallskip

3b)
Notice that $\Qmin\cup\omega$ is saturated and by Lemma~\ref{lm:nbh_spec_points} $\prj(\Qmin\cup\omega)$ is homeomorphic with $[0,1]$.
Since $\prj:\Qmin\cup\omega\to\prj(\Qmin\cup\omega)$ is a locally trivial fibration with fiber $\bR$, we obtain that $\Qmin\cup\omega$ is foliated homeomorphic with $\bR\times[0,1]$ and therefore with $N$.

\smallskip

3c)
It suffices to prove that $\psi(T)$ is closed in $\overline{\Qmin}\setminus\Usp$ being a closed subset of $\Xsp$, which will imply that $\psi(T)$ is closed in $\Xsp$ as well.

Let $\{z_i\}_{i\in\bN} \subset \psi(T)$ be a sequence converging to some $z\in\overline{\Qmin}$.
We should prove that $z\in\psi(T)$ as well.
Let $(x_i,y_i)=\psi^{-1}(z_i) \in T$.
Since $\overline{T}$ is compact, one can assume that $\{(x_i,y_i)\}$ converges to some $(\bar{x},\bar{y}) \in \overline{T}$.

If $(\bar{x},\bar{y}) \in T$, then $z=\lim\limits_{i\to\infty} z_i = \lim\limits_{i\to\infty} \psi(x_i,y_i) = \psi(\bar{x},\bar{y}) \in \psi(T)$.
Otherwise, we have that $(\bar{x},\bar{y}) \in \overline{T}\setminus T \subset \bR\times 0$, so $\bar{y}=0$, and thus $\lim\limits_{i\to\infty} y_i = \bar{y} =0$.
This implies that $z\not\in\Qmin = \psi\bigl(\bR\times(0,1]\bigr)$.
Hence $z \in \overline{\Usp} \cap(\overline{\Qmin}\setminus\Qmin) =  \omega =  \psi(J) \subset \psi(T)$.
\end{proof}

Due to (\ref{enum:lm:prop:loc_finite_families_nbh}) of Lemma~\ref{lm:prop} there exists a family $\cU = \{U_\omega\}_{\omega \in \bar{\Sigma}}$ of neighborhoods of elements of $\bar{\Sigma}$ such that the closures of elements of $\cU$ are pairwise disjoint in $\Xsp$.

Let $\{\omega_i \}_{i\in\Lambda}$ be all the leaves contained in $\overline{\Qmin}\setminus\Qmin$.
Then $\Lambda$ is at most countable set and one can assume that either $\Lambda=\{1,\ldots,k\}$ for some finite $k$ or $\Lambda=\bN$.

By Lemma~\ref{lm:Qmin_omega} for each $i\in\Lambda$ there exists a foliated homeomorphism $\phi_i:N \to \Qmin \cup \omega_i$ such that $\psi_i(J) = \omega_i$.

Then $\psi_{i}^{-1}(U_{\omega_i})$ is an open neighborhood of $J = (-1,1)\times 0$, whence by Lemma~\ref{lemma:graph_neighborhood} there exists a trapezoid $\trap_{i} \,\subset\, \psi_{i}^{-1}(U_{\omega_i}) \, \cap \, \bR\times(0,1)$ with base $J$.
Put
\[
\widehat{\trap}_i = \trap_i \cup J.
\]
Then by Lemma~\ref{lm:Qmin_omega} $\psi_i(\widehat{\trap}_i)$ is closed in $\overline{\Qmin}$.

\begin{figure}[ht]
\includegraphics[height=2.9cm]{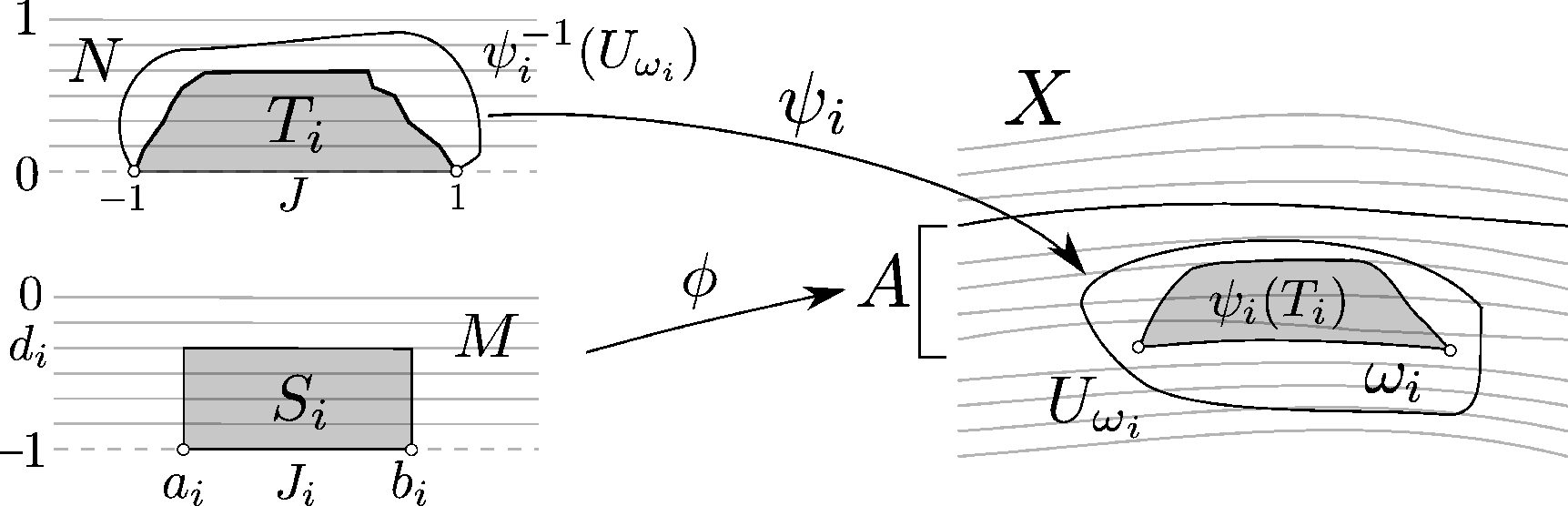}
\caption{}\label{fig:proof_2}
\end{figure}

Denote $\strap_{i} = \phi^{-1} \circ \psi_{i}(\trap_{i})$.
Then $\{\strap_{i} \mid i\in\Lambda\}$ is a family of trapezoids in $\bR\times(-1,0]$.
Assume that the upper base of $\strap_i$ in contained in $\bR\times \dd_i$ for some $\dd_i\in (-1,0)$.
If $\Lambda$ is infinite, then decreasing, if necessary, $\trap_{i}$ (and therefore $\strap_i$), one can assume that $\lim\limits_{i\to\infty} \dd_i = -1$.
Then by Proposition~\ref{prop:trap_make_rectangles} one can change $\phi$ so that
$\strap_i = [a_i, b_i] \times (-1, d_i]$ is a ``half open rectangle'' for some $a_i,b_i\in\bR$.
Then $[a_i,b_i] \cap [a_j,b_j]=\varnothing$ for $i\not=j\in\Lambda$.
Let also $J_i = (a_i,b_i) \times \{-1\}$ and
\[
M \ := \ \bR\times(-1,0] \ \bigcup \ \mathop{\cup}\limits_{i\in\Lambda} J_i.
\]

Then $M$ is a \emph{half model strip}.
Our aim is to construct a foliated homeomorphism between $M$ and $\overline{\Qmin}$.

Denote $\widehat{\strap}_i = \strap_i \cup J_i$, $i\in\Lambda$, and
\[
\Zsp \, := \, M \,\setminus\, \mathop{\cup}\limits_{i\in\Lambda} \bigl(\widehat{\strap}_i\setminus\roof(\strap_i)\bigr) \ \subset \ \bR\times(0,1].
\]
\begin{lemma}\label{lm:hat_strapi_is_loc_finite}
$\{\Zsp\}\cup \{\widehat{\strap}_i\}_{i\in\Lambda}$ is a locally finite cover of $M$ by closed sets.
\end{lemma}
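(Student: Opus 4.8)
The plan is to verify the two assertions—that the family is a cover and that it is locally finite—separately, with the covering part being essentially bookkeeping and the local finiteness part being the real work.

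\textbf{Covering.} First I would check that $\{\Zsp\}\cup\{\widehat{\strap}_i\}_{i\in\Lambda}$ covers $M$. By construction $\Zsp = M\setminus\bigcup_{i\in\Lambda}\bigl(\widehat{\strap}_i\setminus\roof(\strap_i)\bigr)$, so any point of $M$ not lying in some $\widehat{\strap}_i$ already lies in $\Zsp$; and a point lying in the ``removed'' part of some $\widehat{\strap}_i$ lies in that $\widehat{\strap}_i$. Since $\roof(\strap_i)\subset\widehat{\strap}_i$, every point of $M$ lies in $\Zsp$ or in some $\widehat{\strap}_i$. For closedness of the $\widehat{\strap}_i$ in $M$: each $\strap_i=[a_i,b_i]\times(-1,d_i]$ is a half-open rectangle and $\widehat{\strap}_i=\strap_i\cup J_i$ with $J_i=(a_i,b_i)\times\{-1\}$, which is exactly the intersection of the closed set $[a_i,b_i]\times[-1,d_i]$ of $\bR^2$ with $M$, hence closed in $M$. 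Closedness of $\Zsp$ in $M$ will follow once we know $\bigcup_{i}\bigl(\widehat{\strap}_i\setminus\roof(\strap_i)\bigr)$ is open in $M$, or more cleanly, once we have local finiteness of the $\widehat{\strap}_i$: each $\widehat{\strap}_i\setminus\roof(\strap_i)$ is open in $M$ (it is the ``interior'' of the half-open rectangle together with its bottom open edge $J_i$, which is an open subset of $M$), and a locally finite union of open sets is open, so $\Zsp$ is closed.

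\textbf{Local finiteness.} This is the crux. I need: every point $z\in M$ has a neighborhood meeting only finitely many members of the family. Since only the $\widehat{\strap}_i$ can accumulate, it suffices to handle those. The rectangles $\strap_i=[a_i,b_i]\times(-1,d_i]$ have pairwise disjoint base-intervals $[a_i,b_i]$ and, when $\Lambda$ is infinite, $d_i\to -1$. Consider $z=(x_0,y_0)\in M$ with $y_0>-1$: choose $\eps>0$ with $y_0-\eps>-1$; since $d_i\to-1$, only finitely many $i$ have $d_i\ge y_0-\eps$, and a $\widehat{\strap}_i$ meeting the horizontal strip $\bR\times(y_0-\eps,y_0+\eps]$ must have $d_i>y_0-\eps$; so that strip (a neighborhood of $z$ in $M$) meets only finitely many $\widehat{\strap}_i$. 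The remaining case is $z=(x_0,-1)\in J_i$ for a (unique) $i\in\Lambda$. Here I would use that the interval-projection images $\{[a_i,b_i]\}$ accumulate only at $-1$-level points not in any $J_j$; more precisely, since the $\strap_i$ come from pairwise disjoint trapezoids $\trap_i=\psi_i^{-1}\circ\phi(\strap_i)$ sitting inside pairwise-disjoint-closure neighborhoods $U_{\omega_i}$ (by the choice of the family $\cU$), a small box $(a_i-\delta,b_i+\delta)\times(-1-\delta',0]$ chosen to avoid the finitely many $[a_j,b_j]$ that could possibly be near $[a_i,b_i]$ will meet only $\widehat{\strap}_i$. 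The honest justification that only finitely many $[a_j,b_j]$ cluster near a given $[a_i,b_i]$ is that $\overline{U_{\omega_i}}$ meets no other $\overline{U_{\omega_j}}$, and $\psi_i(\widehat{\trap}_i)$ is closed in $\overline{\Qmin}$ (Lemma~\ref{lm:Qmin_omega}); transporting via $\phi$, the point $(x_0,-1)$ has a neighborhood in $M$ disjoint from $\widehat{\strap}_j$ for $j\ne i$.

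\textbf{Main obstacle.} I expect the delicate point to be precisely the local finiteness at the bottom edge $\bR\times\{-1\}$, specifically near points of the $J_i$ and near bottom-edge points that lie in \emph{no} $J_i$: one must rule out an infinite sequence of distinct rectangles $\strap_{i_k}$ whose base intervals $[a_{i_k},b_{i_k}]$ shrink toward a single point of $\bR\times\{-1\}$. This cannot be read off from $d_i\to-1$ alone, and must instead be extracted from the disjointness of the neighborhoods $U_{\omega_i}$ (which forces the leaves $\omega_i$, hence their $\phi$-preimages, to be ``spread out'' along $\bR\times\{-1\}$) together with local finiteness of $\bar\Sigma$ in $\Xsp$; I would phrase this by pulling a locally finite neighborhood of the relevant boundary leaf back through $\phi$ and $\psi_i$ and intersecting with $M$.
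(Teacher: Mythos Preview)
Your treatment of the cover and of the case $y>-1$ matches the paper's proof (up to the harmless slip that \emph{any} union of open sets is open, so local finiteness is not needed to conclude that $\Zsp$ is closed).

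The discrepancy is at the bottom edge. You identify as the ``main obstacle'' the possibility of infinitely many rectangles accumulating at a point $(x_0,-1)$, and in particular you worry about bottom-edge points lying in \emph{no} $J_i$. But such points are simply not in $M$: by definition $M=\bR\times(-1,0]\cup\bigcup_{i\in\Lambda}J_i$, so every point of $M$ at level $-1$ lies in a unique $J_i$. There is therefore no case to handle there.

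For a point $z\in J_i$, the paper's argument is much shorter than yours and does not use the neighborhoods $U_{\omega_i}$, local finiteness of $\bar\Sigma$, or any transport through $\phi$ (which in any case is not defined at level $-1$). One just observes that
\[
\widehat{\strap}_i\setminus\roof(\strap_i)\;=\;\bigl((a_i,b_i)\times(-1,d_i)\bigr)\cup J_i
\]
is open in $M$ (it is the intersection of $M$ with the open set $(a_i,b_i)\times(-2,d_i)$ of $\bR^2$), contains $z$, and meets no $\widehat{\strap}_j$ with $j\neq i$ because $[a_i,b_i]\cap[a_j,b_j]=\varnothing$. So the accumulation question you raise about the intervals $[a_j,b_j]$ never needs to be answered: you do not need a box wider than $(a_i,b_i)$, and your proposed justification via ``transporting via $\phi$'' is both unnecessary and ill-posed, since $\phi$ is only defined on $\bR\times(-1,1)$ and sees none of the boundary leaves $\omega_j$.
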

\begin{proof}
It is evident, that $\widehat{\strap}_i$ is closed in $M$.
Moreover $\widehat{\strap}_i\setminus\roof(\strap_i)$ is open in $M$, whence $\Zsp$ is closed in $M$ as well.
Therefore it remains only to show that each $z=(x,y)\in M$ has an open neighborhood $\Vsp$ intersecting only finitely many elements $\widehat{\strap}_i$.

If $y=-1$, then $z\in(a_i,b_i)\times\{-1\} \subset \widehat{\strap}_i$ for some $i\in\Lambda$.
Hence $\Vsp=\widehat{\strap}_i\setminus\roof(\strap_i)$ is an open neighborhood of $z$ in $N$ intersecting only $\widehat{\strap}_i$.

Suppose that $y>-1$.
Fix any $t$ such that $-1<t<y$.
Then $\Vsp = \bR\times(t,0]$ is an open neighborhood of $z$ in $M$.
By assumption $\lim\limits_{i\to\infty}d_i = -1$, whence there exists $n>0$ such that $-1<d_i<t$ for all $i>n$, and so $\widehat{\strap}_i\cap\Vsp=\varnothing$.
\end{proof}

\begin{lemma}\label{lm:psii_trap_is_loc_finite}
$\{\phi(\Zsp)\}\cup \{\psi_i(\widehat{\trap}_i)\}_{i\in\Lambda}$ is a locally finite cover of $\overline{\Qmin}$ by closed sets.
\end{lemma}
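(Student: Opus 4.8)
The statement to prove, Lemma~\ref{lm:psii_trap_is_loc_finite}, is the ``image under $\phi$ and the $\psi_i$'' counterpart of Lemma~\ref{lm:hat_strapi_is_loc_finite}, and the natural strategy is to transport Lemma~\ref{lm:hat_strapi_is_loc_finite} across the maps we have already constructed. Recall that $\phi:\bR\times(-1,1)\to\Qsp$ is a foliated homeomorphism and that $\strap_i=\phi^{-1}\circ\psi_i(\trap_i)$, so on interiors the two covers correspond under $\phi$; the only subtlety is that $M$ and $\overline{\Qmin}$ acquire their extra boundary intervals $J_i$ and $\omega_i$ from different places, so $\phi$ alone does not identify $M$ with $\overline{\Qmin}$. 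Thus the plan is: (1) show each $\psi_i(\widehat{\trap}_i)$ and $\phi(\Zsp)$ is closed in $\overline{\Qmin}$; (2) show the family covers $\overline{\Qmin}$; (3) establish local finiteness.

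For closedness, $\psi_i(\widehat{\trap}_i)=\psi_i(\trap_i\cup J)$ is closed in $\overline{\Qmin}$ by the last sentence of part~3c) of Lemma~\ref{lm:Qmin_omega}, applied with $\Usp=U_{\omega_i}$ and $T=\trap_i$ (the hypotheses $\overline{U_{\omega_i}}\cap(\overline{\Qmin}\setminus\Qmin)=\omega_i$ hold because the closures of the $U_\omega$ are pairwise disjoint and meet $\overline{\Qmin}\setminus\Qmin=\{\omega_j\}_{j\in\Lambda}$ only in $\omega_i$). For $\phi(\Zsp)$: by Lemma~\ref{lm:hat_strapi_is_loc_finite}, $\Zsp$ is closed in $M$, and $\Zsp\subset\bR\times(-1,0]$ lies in the domain where $\phi$ is defined; one checks that $\phi(\Zsp)=\overline{\Qmin}\setminus\bigcup_{i}\bigl(\psi_i(\widehat\trap_i)\setminus\psi_i(\roof(\trap_i))\bigr)$ — i.e.\ the complement in $\overline{\Qmin}$ of the open sets $\psi_i(\widehat\trap_i\setminus\roof(\trap_i))=\psi_i(\trap_i)\cup\omega_i$ minus their roofs — using that $\phi$ carries the interior decomposition of $M$ to that of $\Qsp$ and that each $\psi_i$ agrees with $\phi$ on $\trap_i$ up to the identification $\phi\circ(\text{id on }\trap_i)$. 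Since these subtracted sets are open in $\overline{\Qmin}$ (part~3a of Lemma~\ref{lm:Qmin_omega} gives $\Qmin\cup\omega_i$ open in $\overline{\Qmin}$, and $\psi_i(\trap_i)\cup\omega_i$ is open there), $\phi(\Zsp)$ is closed. The covering property is then immediate: every point of $\overline{\Qmin}$ either lies in $\Qmin=\phi(\bR\times(-1,0])$ — where $\Zsp$ together with the $\widehat\strap_i\setminus J_i$ cover $\bR\times(-1,0]$ by Lemma~\ref{lm:hat_strapi_is_loc_finite} — or lies in $\overline{\Qmin}\setminus\Qmin=\{\omega_i\}$, and $\omega_i\subset\psi_i(\widehat\trap_i)$.

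For local finiteness, given $z\in\overline{\Qmin}$ I distinguish two cases mirroring the proof of Lemma~\ref{lm:hat_strapi_is_loc_finite}. If $z\in\Qmin$, pull back via $\phi$ to a point of $\bR\times(-1,0]$ and take the neighborhood $\bR\times(t,0]$ with $-1<t<q_2(\phi^{-1}(z))$; since $\lim_i d_i=-1$, only finitely many $\strap_i$ meet it, hence only finitely many $\psi_i(\widehat\trap_i)$ meet $\phi(\bR\times(t,0])$, which is open in $\overline{\Qmin}$. If $z\in\overline{\Qmin}\setminus\Qmin$, then $z\in\omega_i$ for a unique $i$; the set $\psi_i(\widehat\trap_i\setminus\roof(\trap_i))=\psi_i(\trap_i)\cup\omega_i$ is an open neighborhood of $z$ in $\overline{\Qmin}$ (part~3a), it is contained in $U_{\omega_i}$, and since the closures $\overline{U_{\omega_j}}$ are pairwise disjoint it meets no $\psi_j(\widehat\trap_j)$ with $j\neq i$ (as $\psi_j(\widehat\trap_j)\subset\overline{U_{\omega_j}}$) and meets $\phi(\Zsp)$ only in $\psi_i(\roof(\trap_i))$, so only one element of the family is hit. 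The main obstacle is the bookkeeping in step~(1) identifying $\phi(\Zsp)$ as the complement of the relevant open sets: one must check carefully that $\phi$ and the $\psi_i$ are compatible on the trapezoids $\trap_i$ (which is exactly how $\strap_i$ was defined) and that the ``roof'' pieces glue correctly along $\omega_i$; once that bookkeeping is in place, closedness, covering, and local finiteness all follow by pushing forward Lemma~\ref{lm:hat_strapi_is_loc_finite} through the (now essentially identifying) maps.
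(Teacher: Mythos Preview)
Your proposal is correct and follows essentially the same approach as the paper's proof: closedness of $\psi_i(\widehat{\trap}_i)$ via part~3c) of Lemma~\ref{lm:Qmin_omega}, closedness of $\phi(\Zsp)$ by expressing it as the complement in $\overline{\Qmin}$ of the open sets $\psi_i(\widehat{\trap}_i\setminus\roof(\trap_i))$, and local finiteness by splitting into the cases $q\in\Qmin$ (pull back via $\phi$ and invoke Lemma~\ref{lm:hat_strapi_is_loc_finite}) versus $q\in\omega_i$ (use a neighborhood contained in $U_{\omega_i}$). The only cosmetic differences are that the paper uses the full $U_{\omega_i}$ rather than your smaller $\psi_i(\widehat{\trap}_i\setminus\roof(\trap_i))$ in the boundary case, and that you spell out the covering property explicitly while the paper leaves it implicit.
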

\begin{proof}
By 3c) of Lemma~\ref{lm:Qmin_omega} each $\psi_i(\widehat{\trap}_i)$ is closed in $\Xsp$.
Furthermore,
\begin{align*}
\phi(\Zsp)
&= \phi\Bigl( M \,\setminus\, \mathop{\cup}\limits_{i\in\Lambda}\, \bigl( \widehat{\strap}_i\setminus\roof(\strap_i) \bigr) \Bigr)
= \overline{\Qmin} \,\setminus\, \mathop{\cup}\limits_{i\in\Lambda}\, \psi_i\bigl(\widehat{\trap}_i\setminus\roof(\trap_i)\bigr),
\end{align*}
and it is also evident that $\widehat{\trap}_i\setminus\roof(\trap_i)$ is open in $N$.
But due to 3b) of Lemma~\ref{lm:Qmin_omega} $\psi_i$ is a homeomorphism of $N$ onto the open subset $\Qmin\cup\omega_i$ of $\overline{\Qmin}$.
Therefore $\psi_i(\widehat{\trap}_i\setminus\roof(\trap_i))$ is open in $\overline{\Qmin}$, whence $\phi(\Zsp)$ is closed in $\overline{\Qmin}$.

It remains to show that $\{\psi_i(\widehat{\trap}_i)\}_{i\in\Lambda}$ is a locally finite family.
Let $q\in\overline{\Qmin}$.

If $q\in\omega_i$, then $\Usp_{\omega_i}$ is an open neighborhood of $q$ intersecting only one set $\psi_i(\widehat{\trap}_i)$.

Suppose $q\in\overline{\Qmin}\setminus\Qmin$ and let $z=(x,y) = \phi^{-1}(q) \in \bR\times(-1,0] \subset M$.
Then by Lemma~\ref{lm:hat_strapi_is_loc_finite} there exists an open neighborhood $\Vsp$ of $z$ in $\bR\times(-1,0]$ intersecting only finitely many $\widehat{\strap}_i$.
But the map $\phi:\bR\times(-1,0] \to \Qmin$ is a homeomorphism, whence $\phi(\Vsp)$ is an open neighborhood of $q$ in $\Qmin$ intersecting only finitely many $\psi_i(\widehat{\trap}_i) = \phi(\strap_i) \cup\omega_i$.
\end{proof}

Notice that the composition $\psi^{-1}\circ\phi|_{\strap_i}: \strap_i\to\trap_i$ is a level-preserving homeomorphism, however in general it can not be extended to a homeomorphism between their bases.
Nevertheless, $\psi^{-1}\circ\phi$ yields a level-preserving homeomorphism $\roof(\strap_i)\to \roof(\trap_i)$, and therefore by Lemma~\ref{lm:level_pres_homeo_of_roofs} it extends to a level-preserving homeomorphism $\xi_i:\overline{\strap_i} \to \overline{\trap_i}$.

Now define the following map $\eta:M \to \overline{\Qmin}$ by
\[
\eta(z) =
\begin{cases}
\psi_{i} \circ \xi_i(z), & z\in \widehat{\strap}_i \ \text{for some } i\in\Lambda, \\
\phi(z), & z\in\Zsp.
\end{cases}
\]

We claim that $\eta$ is the required homeomorphism.

Indeed, evidently, $\eta$ is a bijection.
Furthermore, by Lemma~\ref{lm:hat_strapi_is_loc_finite} $\{\Zsp\}\cup \{\widehat{\strap}_i\}_{i\in\Lambda}$ is a locally finite closed cover of $M$, and by Lemma~\ref{lm:psii_trap_is_loc_finite} their images $\{\phi(\Zsp)\} \cup \{\psi_i(\widehat{\trap}_i)\}_{i\in\Lambda}$ constitute a locally finite closed cover of $\overline{\Qmin}$.
Finally, the restrictions $\eta|_{\Zsp}$ and $\eta|_{\widehat{\strap}_i}$ are homeomorphisms.
Then by~(\ref{enum:lm:prop:homeomorphism}) of Lemma~\ref{lm:prop} $\eta$ is a homeomorphism.
Theorem~\ref{th:open_strips} is completed.




\end{document}